\newtheorem{theorem}{Theorem}[section]
\newtheorem{corollary}[theorem]{Corollary}
\newtheorem{lemma}[theorem]{Lemma}
\newtheorem{proposition}[theorem]{Proposition}
\theoremstyle{definition}
\newtheorem{definition}[theorem]{Definition}
\newtheorem{example}[theorem]{Example}
\newtheorem{remark}[theorem]{Remark}
\newcommand{\norm}[1]{\left\lVert#1\right\rVert}
\DeclareMathOperator\diag{diag}
\DeclareMathOperator\dist{dist}
\DeclareMathOperator\supp{supp}
\DeclareMathOperator\Lip{Lip}
\title[Pointwise equidistribution and Weighed L\'evy-Khintchin theorem]{Pointwise equidistribution for almost smooth functions with an error rate and Weighted L\'evy-Khintchin theorem}
\author{Bohan Yang}
\address{Yau Mathematical Sciences Center, Tsinghua University, Beijing, 100084, China }
\email{ybh20@mails.tsinghua.edu.cn}
\author{Han Zhang}
\address{School of Mathematical Science, Soochow University, Suzhou 215006, China
}
\email{hzhang.math@suda.edu.cn}
\date{}
\begin{document}

\begin{abstract}
The purpose of this article is twofold: to prove a pointwise equidistribution theorem with an error rate for almost smooth functions, which strengthens the main result of Kleinbock, Shi and Weiss (2017); and to obtain a L\'evy-Khintchin theorem for weighted best approximations, which extends the main theorem of Cheung and Chevallier (2019).

To do so, we employ techniques from homogeneous dynamics and the methods developed in the work of Cheung-Chevallier (2019) and Shapira-Weiss (2022).

\end{abstract}

\maketitle

\section{Introduction}
Let $N$ be a positive integer, $G=SL_N(\mathbb{R})$ and $\Gamma=SL_{N}(\mathbb{Z})$. We denote $\mathfrak{X}_{N}=G/\Gamma$, which is the space of unimodular lattices in $\mathbb{R}^N$. Take $m,n\in \mathbb{N}$ such that $m+n=N$. Denote by $M_{m,n}$ the space of $m\times n$ matrices. For $A\in M_{m,n}$, we consider the unimodular lattice $u(A)\mathbb{Z}^N\in \mathfrak{X}_{N}$, where 
\[u(A):=\begin{pmatrix}
    I_m & A\\
    0 & I_n
\end{pmatrix}.\]
A vector $\boldsymbol{a}={^t(}a_1,\cdots,a_m)\in \mathbb{R}_{>0}^m$ ($\boldsymbol{b}\in \mathbb{R}^n_{>0}$, resp.) is called a weight vector if $\sum_{i=1}^m a_i=1$ ($\sum_{j=1}^n b_j=1$, resp.), where the superscript $t$ means the transpose of the vector and we view $\mathbb{R}^m$ as the set of $m$ by $1$ column vectors. The case $\boldsymbol{a}={^t(}1/m,\cdots,1/m)$ and $\boldsymbol{b}={^t(}1/n,\cdots,1/n)$ will be referred as equal weights. For two weight vectors
 $\boldsymbol{a}\in \mathbb{R}_{>0}^m$ and $\boldsymbol{b}\in \mathbb{R}_{>0}^n$, we define a one-parameter diagonal subgroup $\{a_t\}_{t\in \mathbb{R}}\subset G$ by 
 \begin{equation}\label{equation: definition of a t}
     a_t:=\diag(e^{a_1 t},\cdots,e^{a_m t},e^{-b_1 t},\cdots,e^{-b_n t}).
 \end{equation}
It turns out that many weighted Diophantine properties of the matrix $A$ are encoded by the $a_t$-trajectory of $u(A)\mathbb{Z}^N$ in $\mathfrak{X}_N$. For a small sample of such deep connections, we refer the reader to the classical Dani's correspondence \cite{Dani_1985_Divergent_trajectories_of_flows_on_homogeneous_spaces_MR794799} and its developments (see e.g. \cite{Kleinbock_Weiss_2013_Modified_Schmidt_games_and_a_conjecture_of_Margulis_MR3296561,Kleinbock_Weiss_2008_Dirichlet's+theorem+on+Diophantine_MR2366229,Saxce_2022_Rational+approximations+to+linear+subspaces}), the recent theory of parametric geometry of numbers developed by Schmidt-Sumerer \cite{Schmidt_Summerer_2009_Parametric_geometry_of_numbers_and_applications_MR2557854,Schmidt_Summerer_2013_Diophantine_approximation_and_parametric_geometry_of_numbers_MR3016519} and Roy \cite{Roy_2015_On_Schmidt_and_Summerer_parametric_goemMR3418530,Roy_2016_Spectrum_of_the_exponents_of_best_rational_approximation_MR3489062}. 

Usually for a given matrix $A$, it is difficult to analyze the $a_t$-trajectory of $u(A)\mathbb{Z}^{N}$. However, using tools from ergodic theory and homogeneous dynamics, it is possible to anticipate the behavior of $a_t$-trajectory of many $u(A)\mathbb{Z}^N$, in the sense of both Lebesgue measure and Hausdorff dimension (see e.g. \cite{Cheung_2011_Hausdorff_dimension_of_the_set_of_singular_pairs_MR2753601,Cheung_Chevallier_2016_Hausdorff_dimension_of_singular_vectors_MR3544282, Liao_Shi_Solan_Tamam_2020_Hausdorff_dimension_of_weighted_MR4055990,Kleinbock_Margulis_1999_Logarithm_laws_MR1719827,Kleinbock_Weiss_2013_Modified_Schmidt_games_and_a_conjecture_of_Margulis_MR3296561,Shi_2020_Pointwise_equidistribution_for_one_parameter_diagonalizable_group_action_on_homogeneous_space_MR4105521,Kleinbock_Strombergsson_Yu_2022_A_measure_estimate_MR4500198}). See also \cite{Shi_Weiss_2017_Invariant+measures+for+solvable+groups+MR3642031,Shah_2009_Equidistribution_of_expanding_translates_of_curves_and_MR2534098,Yang_2020_Equidistribution_of_expanding_translates_of_curves_and_MR4094972} for equidistribution results with respect to measures supported on submanifolds of $M_{m,n}$. In particular, when $\boldsymbol{a},\boldsymbol{b}$ are of equal weights, the classical Birkhoff ergodic theory implies that for Lebesgue almost every $A$, the $a_t$-trajectory of $u(A)\mathbb{Z}^N$ equidistributes in $\mathfrak{X}_N$ with respect to the unique $G$-invariant probability measure. For general weight vectors, Birkhoff ergodic theorem does not apply. Yet using techniques from homogeneous dynamics, Shi \cite{Shi_2020_Pointwise_equidistribution_for_one_parameter_diagonalizable_group_action_on_homogeneous_space_MR4105521} and Kleinbock-Shi-Weiss \cite{Kleinbock_Shi_Barak_2017_Pointwise_equidistribution_with_an_error_rate_and_with_respect_to_unbounded_functions_MR3606456} obtained the same equidistribution result for Lebesgue almost every $A$. More precisely, Shi \cite{Shi_2020_Pointwise_equidistribution_for_one_parameter_diagonalizable_group_action_on_homogeneous_space_MR4105521} obtained the pointwise equidistribution result in the setting of general Lie group modulo its lattice. While \cite{Kleinbock_Shi_Barak_2017_Pointwise_equidistribution_with_an_error_rate_and_with_respect_to_unbounded_functions_MR3606456} focuses only on $SL_N(\mathbb{R})/SL_N(\mathbb{Z})$, an equidistribution result with error rate for compactly supported smooth functions was obtained. Our first main theorem of this article strengthens the main result of \cite{Kleinbock_Shi_Barak_2017_Pointwise_equidistribution_with_an_error_rate_and_with_respect_to_unbounded_functions_MR3606456} to the case of so called `almost smooth' functions on $SL_N(\mathbb{R})/SL_N(\mathbb{Z})$. The interested reader may also see \cite{Gaposhkin_1981_The+dependence+of+the+rate_MR636767,Kach_1996_Rates+of+convergence_MR1422228} for relevant results on ergodic average with an error rate.

\begin{theorem}\label{Theorem: Pointwise ergodic theorem with rate for nonsmooth functions}
Let $\Lambda\in \mathfrak{X}_N$ and $\epsilon>0$ be given. Let $f$ be an almost smooth function on $\mathfrak{X}_N$. Then for Lebesgue almost every $A\in \mathcal{U}$, 
\begin{align*}
    \frac{1}{T}\int_0^T f(a_t u(A)\Lambda)dt=\mu_N(f)+o(T^{-1/2}\log^{2+\epsilon}T).
\end{align*}
Here the notation $\phi(T)=o(\psi(T))$ means $\lim_{T\to \infty}\frac{\phi(T)}{\psi(T)}=0$.
\end{theorem}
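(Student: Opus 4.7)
My plan is to reduce the theorem to the smooth compactly supported case already established by Kleinbock-Shi-Weiss via a sandwich approximation, where the approximation error is controlled by quantitative estimates on how much time the $a_t$-trajectory of $u(A)\Lambda$ spends high in the cusp of $\mathfrak{X}_N$.

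For the setup, I fix a proper height function $\alpha : \mathfrak{X}_N \to [1,\infty)$ coming from reduction theory so that each sublevel set $\{\alpha \le R\}$ is compact and exhausts $\mathfrak{X}_N$ as $R \to \infty$. The notion of \emph{almost smooth} should be strong enough to furnish, for every $R > 0$, smooth compactly supported approximants $f_R^-, f_R^+$ with $f_R^- \le f \le f_R^+$ pointwise, with $\mu_N(f_R^+ - f_R^-)$ decaying quantitatively in $R$, and with Sobolev norms growing at most polynomially in $R$. Applying the Kleinbock-Shi-Weiss pointwise ergodic theorem with rate to each $f_R^\pm$ then yields
\begin{equation*}
\frac{1}{T}\int_0^T f_R^\pm(a_t u(A)\Lambda)\,dt = \mu_N(f_R^\pm) + o\!\left(C(R)\,T^{-1/2}\log^{2+\epsilon}T\right)
\end{equation*}
for Lebesgue-almost every $A$, where $C(R)$ absorbs the polynomial dependence on Sobolev norms.

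Sandwiching $f$ between $f_R^\pm$ then reduces the task to two points: first, choosing $R = R(T) \to \infty$ slowly enough that $C(R(T))$ fits inside the $\log^{2+\epsilon}T$ slack; second, ensuring that the approximation gap $\mu_N(f_{R(T)}^+ - f_{R(T)}^-)$, together with the contribution to the ergodic average from those $t \in [0,T]$ with $a_t u(A)\Lambda \in \{\alpha > R(T)\}$, is $o(T^{-1/2}\log^{2+\epsilon}T)$. The latter contribution is handled by quantitative non-divergence of $\{a_t u(A)\mathbb{Z}^N\}$ in the spirit of Kleinbock-Margulis, combined with a Borel-Cantelli argument along a geometric sequence $T_k = 2^k$, which shows that for almost every $A$ the excursions of the trajectory beyond height $R(T_k)$ occupy a sufficiently small proportion of $[0,T_k]$.

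The main obstacle is the two-parameter optimization that simultaneously wants $R(T)$ to be large (for good tail control of $f - f_R^\pm$) and small (to keep the Sobolev factor $C(R)$ under the available logarithmic room). A naive polynomial tail estimate on the cusp excursions would lose more than a logarithmic factor and miss the stated rate, so the bound must be quantitative and sharp; crucially it must use a height function adapted to the weighted flow $a_t$, for which the non-divergence exponent depends on $\boldsymbol{a}$ and $\boldsymbol{b}$. Verifying that the weighted quantitative recurrence is just sufficient to keep every error term within $T^{-1/2}\log^{2+\epsilon}T$, in particular forcing an optimal choice $R(T)$ of slow-variation type in $\log T$, is the delicate point around which the proof should be organized.
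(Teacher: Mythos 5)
Your plan has a genuine gap exactly where the paper warns one will arise (see their Remark~(1) after Theorem~\ref{Theorem: Pointwise ergodic theorem with rate for nonsmooth functions}). You propose to apply the Kleinbock--Shi--Weiss rate theorem to each fixed approximant $f_R^\pm$, writing the conclusion as $\mu_N(f_R^\pm)+o(C(R)\,T^{-1/2}\log^{2+\epsilon}T)$ as if the error were an explicit $O(\cdot)$ bound whose constant $C(R)$ you can track and then optimize by letting $R=R(T)\to\infty$. But the Kleinbock--Shi--Weiss theorem is a \emph{pointwise} $o$-statement proved via Borel--Cantelli: the full-measure set, and more importantly the threshold $T_0=T_0(A,f)$ beyond which the $o$-bound is effective, both depend on the function $f$. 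Intersecting over a countable family of $R$'s gives you a single full-measure set of $A$'s, but for such an $A$ you only know that for each \emph{fixed} $R$ the averaged $f_R^\pm$ eventually falls within the stated rate; nothing controls how late ``eventually'' is as $R$ grows. So the two-parameter optimization $R=R(T)$ that you correctly identify as the crux has no legitimate starting point: you would need a quantitative, $R$-uniform version of the Kleinbock--Shi--Weiss theorem, which is not what their statement gives you.

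The paper resolves this by \emph{not} applying the existing smooth theorem to each $f_R^\pm$. Instead it proves a new abstract Borel--Cantelli ergodic theorem (Theorem~\ref{Theorem: a general method for proving pointwise ergodic with rate}) in which the function itself is allowed to depend on the time parameter: one considers $F_t(x,t)$, the approximant at scale $t$ evaluated at time $t$, and the hypothesis is a two-time correlation bound $|\int_Y F_t(x,t)F_w(x,w)\,d\nu|\ll w^M e^{-\delta\min(t,w-t)}$ with a polynomial loss $w^M$ reflecting the growing Sobolev norm of the approximants. The time-coupling is built into the abstract statement, and Lemma~\ref{Lemmal: area of bad region} (bounding the area of the ``bad'' region $\mathcal{R}$ where the polynomial factor dominates the exponential decay) is precisely what absorbs the extra polynomial cost, at the price of $\log^{2+\epsilon}T$ instead of the smooth case's $\log^{3/2+\epsilon}T$. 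With this in hand the sandwich argument you describe does go through, using hypotheses (3) and (4) of Definition~\ref{Definition: almost smooth functions}; notably the paper's proof never invokes a height function, cusp excursion estimates, or quantitative non-divergence, since the second-moment Borel--Cantelli machinery carries all the load once the approximant index is coupled to time.
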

The notion of almost smooth functions is taken from \cite[Section 2]{Kleinbock_Margulis_1996_Bounded_orbits_of_nonquasiunipotent_MR1359098}. A function $f$ is `almost smooth' if it can be approximated reasonably well by suitable smooth functions from above and below, see Definition \ref{Definition: almost smooth functions} for precise meaning of `reasonably well'. Many nice functions are almost smooth, see Examples \ref{Example: almost smooth function 1} and \ref{Example: almost smooth funcions 2}. We remark that any $f\in C_c^{\infty}(\mathfrak{X}_N)$ is automatically almost smooth. 

\begin{remark}
\begin{itemize}
    \item[(1)]
\cite[Theorem 1.1]{Kleinbock_Shi_Barak_2017_Pointwise_equidistribution_with_an_error_rate_and_with_respect_to_unbounded_functions_MR3606456} holds for compactly supported smooth functions. Since the proof of \cite[Theorem 1.1]{Kleinbock_Shi_Barak_2017_Pointwise_equidistribution_with_an_error_rate_and_with_respect_to_unbounded_functions_MR3606456} depends on a Borel-Cantelli type argument, approximating an almost smooth function $f$ using smooth functions and applying \cite[Theorem 1.1]{Kleinbock_Shi_Barak_2017_Pointwise_equidistribution_with_an_error_rate_and_with_respect_to_unbounded_functions_MR3606456} directly to these smooth functions would not yield Theorem \ref{Theorem: Pointwise ergodic theorem with rate for nonsmooth functions}. The proof of Theorem \ref{Theorem: Pointwise ergodic theorem with rate for nonsmooth functions} requires a generalization of \cite[Theorem 3.1]{Kleinbock_Shi_Barak_2017_Pointwise_equidistribution_with_an_error_rate_and_with_respect_to_unbounded_functions_MR3606456}, which is Theorem \ref{Theorem: a general method for proving pointwise ergodic with rate} in this article.

\item[(2)] Theorem \ref{Theorem: Pointwise ergodic theorem with rate for nonsmooth functions} is a strengthening of \cite[Theorem 1.1]{Kleinbock_Shi_Barak_2017_Pointwise_equidistribution_with_an_error_rate_and_with_respect_to_unbounded_functions_MR3606456}, although with a slightly worse error rate ($T^{-1/2}\log^{2+\epsilon}T$ instead of $T^{-1/2}\log^{3/2+\epsilon}T$).
\end{itemize}
\end{remark}
As a second main theme of this article, we present an number 
theoretic application of Theorem \ref{Theorem: Pointwise ergodic theorem with rate for nonsmooth functions} (or \cite[Theorem 1.1]{Kleinbock_Shi_Barak_2017_Pointwise_equidistribution_with_an_error_rate_and_with_respect_to_unbounded_functions_MR3606456}) concerning the recent work of Cheung-Chevallier \cite{Cheung_Chevallier_2019_Levy_Khintchin_Theorem} and Shapira-Weiss \cite{Shapira_Weiss_2022_Geometric_and_arithmetic_aspects}. We refer the reader to \cite{Kleinbock_Shi_Barak_2017_Pointwise_equidistribution_with_an_error_rate_and_with_respect_to_unbounded_functions_MR3606456} for other interesting applications.

To illustrate the application of pointwise equidistribution result to number theory, we let $d$ be a positive integer and  $\boldsymbol{w}\in \mathbb{R}^d_{>0}$ be a weight vector. We define a $\boldsymbol{w}$-quasi-norm (see e.g. \cite{Liao_Shi_Solan_Tamam_2020_Hausdorff_dimension_of_weighted_MR4055990}) on $\mathbb{R}^d$ by
\begin{equation}\label{equation: definition of w-quasi norm}
    \|{^t(}x_1,x_2,\cdots,x_d)\|_{\boldsymbol{w}}=\max_{1\leq i\leq d}\{|x_i|^{1/{w_i}}\}.
\end{equation}
For a vector $\boldsymbol{\theta}\in \mathbb{R}^d$, one can associate a sequence of $\boldsymbol{w}$-best approximable vectors $(\boldsymbol{p}_n(\boldsymbol{\theta}),q_n(\boldsymbol{\theta}))\in \mathbb{Z}^d\times \mathbb{N}$ to $\boldsymbol{\theta}$ with respect to $\boldsymbol{w}$-quasi-norm (for precise definition, see Definition \ref{Definition: w best approximation vector}). Roughly speaking, $(\boldsymbol{p}_n(\boldsymbol{\theta}),q_n(\boldsymbol{\theta}))\in \mathbb{Z}^d\times \mathbb{N}$ is a $\boldsymbol{w}$-best approximable vector of $\boldsymbol{\theta}$ if among all rational vectors of denominator no greater than $q(\boldsymbol{\theta})$, $\boldsymbol{p}(\boldsymbol{\theta})/q(\boldsymbol{\theta})$ is the closest rational vector to $\boldsymbol{\theta}$ with respect to the norm $\norm{\cdot}_{\boldsymbol{w}}$. When $d=1$, $\{p_n(\theta)/q_n(\theta)\}$ are exactly convergents of $\theta\in \mathbb{R}$ in the classical theory of continued fraction expansion. Thus, the theory of best approximation is a higher dimensional generalization of continued fraction theory. 

For the classical continued fraction theory, the growth rate of $q_n$ has attracted considerable attention. Khintchin \cite{Khintchine_1936_Zur+metrischen_MR1556944} showed that for almost all real numbers $\theta$, denominators $\{q_n\}$ of its convergent satisfy 
\[\lim_{n\to \infty} q_n^{1/n}=\gamma\]
for some constant $\gamma$, whose exact value was given by L\'evy \cite{Levy_1936_Sur+le+development_MR1556945}. Using the connection between Gauss map and continued fraction expansion, L\'evy-Khintchin theorem could be proved using Birkhoff ergodic theorem\footnote{ Khintchin didn't use ergodic theory to prove his result.}. However, there is no corresponding `Gauss map' for best approximations in higher dimensional setting. We refer the interested reader to \cite{Cheung_Chevallier_2019_Levy_Khintchin_Theorem} and the references therein for a more detailed historical account for the study of L\'evy-Khintchin type result from different aspects.

Recently, Cheung and Chevallier \cite{Cheung_Chevallier_2019_Levy_Khintchin_Theorem} obtained a L\'evy-Khintchin theorem for best approximations in higher dimensional setting, and calculated the exact value of the corresponding L\'evy-Khintchin constant when $d=2$. They circumvented this problem by analyzing the first return map of a diagonal flow to certain cross-section in the space of unimodular lattices. Soon afterward, Shapira and Weiss \cite{Shapira_Weiss_2022_Geometric_and_arithmetic_aspects} promoted Cheung-Chevallier's result to Adelic setting, and the asymptotic statistical properties of different types of vectors $\boldsymbol{\theta}$ were obtained. Certain cross-section in the space of unimodular lattices also plays a crucial role in the work of Shapira-Weiss, though this cross-section is different from the one used in Cheung-Chevallier.

Following the approach of Shapira and Weiss in \cite{Shapira_Weiss_2022_Geometric_and_arithmetic_aspects}, we extends L\'evy-Khintchin theorem to $\boldsymbol{w}$-best approximations.
\begin{theorem}\label{Theorem: weighted Levy-Khintchine constant}
Let $\boldsymbol{w}\in \mathbb{R}^d$ be a weight vector. Then there is a constant $L_d(\boldsymbol{w})$ such that for almost all $\boldsymbol{\theta}\in \mathbb{R}^d$, 
\begin{itemize}
    \item[(1)] \begin{equation*}\label{equation: limit of qn}
    \lim_{n\to \infty}\frac{1}{n}\log q_n(\boldsymbol{\theta})=L_d(\boldsymbol{w}),
\end{equation*}

    \item[(2)] \begin{equation*}\label{equation: limit of rn}
    \lim_{n\to \infty}\frac{1}{n}\log \norm{q_n(\boldsymbol{\theta})\boldsymbol{\theta}-\boldsymbol{p}_n(\boldsymbol{\theta})}_{\boldsymbol{w}}=-L_d(\boldsymbol{w}).
\end{equation*}
\item[(3)]  Moreover, denote $\beta_n(\boldsymbol{\theta})=q_{n+1}(\boldsymbol{\theta})\norm{q_n(\boldsymbol{\theta})\boldsymbol{\theta}-\boldsymbol{p}_n(\boldsymbol{\theta})}_{\boldsymbol{w}}$ for $\boldsymbol{\theta}\in \mathbb{R}^d$, there exists a probability measure $\nu_d^{\boldsymbol{w}}$ on $\mathbb{R}$ such that for almost all $\boldsymbol{\theta}$, 
\begin{equation*}\label{equation: distribution of qn times discrepancy}
    \lim_{n\to \infty}\frac{1}{n}\sum_{k=1}^n \delta_{\beta_k(\boldsymbol{\theta})}=\nu_d^{\boldsymbol{w}} \text{ in the weak-* topology,}
\end{equation*}
where $\delta_{\beta_k(\boldsymbol{\theta})}$ denotes the Dirac measure on $\beta_k(\boldsymbol{\theta})$.
\end{itemize}
\end{theorem}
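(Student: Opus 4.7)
The plan is to follow the strategy of Shapira--Weiss \cite{Shapira_Weiss_2022_Geometric_and_arithmetic_aspects}, replacing their appeal to the usual ergodic theorem (valid only for equal weights) with our pointwise equidistribution Theorem \ref{Theorem: Pointwise ergodic theorem with rate for nonsmooth functions} applied in the weighted setting with $N=d+1$, $m=d$, $n=1$, $\boldsymbol{a}=\boldsymbol{w}$ and $\boldsymbol{b}=1$. For $\boldsymbol{\theta}\in\mathbb{R}^d$ the lattice $\Lambda_{\boldsymbol{\theta}}:=u(\boldsymbol{\theta})\mathbb{Z}^{d+1}$ contains the family of vectors $(q\boldsymbol{\theta}-\boldsymbol{p},q)$, and by the very definition of a $\boldsymbol{w}$-best approximation, $(\boldsymbol{p}_n(\boldsymbol{\theta}),q_n(\boldsymbol{\theta}))$ is precisely the primitive lattice vector whose image $a_{\log q_n}(q_n\boldsymbol{\theta}-\boldsymbol{p}_n,q_n)$ records a new $\boldsymbol{w}$-quasi-norm minimum on the first $d$ coordinates while having last coordinate exactly $1$.

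\textbf{Step 1.} I would construct a cross-section $\mathcal{S}\subset\mathfrak{X}_{d+1}$, transverse to the $\{a_t\}$-flow and modelled on the Shapira--Weiss cross-section but adapted to the weighted quasi-norm: the lattices admitting a primitive vertical vector of length one whose accompanying box (with dimensions prescribed by $\boldsymbol{w}$) contains no other primitive lattice vector. Provided the construction is carried out correctly, the return times of $\{a_t\Lambda_{\boldsymbol{\theta}}\}$ to $\mathcal{S}$ equal $\{\log q_n(\boldsymbol{\theta})\}_{n\geq 1}$ up to an additive constant. I would then compute the induced cross-section measure $\mu_{\mathcal{S}}$ via the Siegel integration formula and verify its finiteness; the constant $L_d(\boldsymbol{w})$ will emerge from a Kac-type identity relating the total mass of a thin flow-box around $\mathcal{S}$ to the average return time.

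\textbf{Step 2.} The three conclusions are re-expressed as pointwise ergodic averages of prescribed functions along $\{a_t\Lambda_{\boldsymbol{\theta}}\}$. Statement (1) corresponds to averaging the indicator $\mathbf{1}_{\mathcal{S}_\delta}$ of a thin flow-box $\mathcal{S}_\delta$ around $\mathcal{S}$ and applying the Kac identity; statement (2) follows by averaging a logarithmic weight supported on $\mathcal{S}_\delta$ and dividing by (1); statement (3) is obtained by averaging $\varphi(\beta)\mathbf{1}_{\mathcal{S}_\delta}$ for bounded continuous $\varphi\colon\mathbb{R}\to\mathbb{R}$, where $\beta$ is the cross-section function whose value at $\Lambda_{\boldsymbol{\theta}}$ at return time $\log q_n$ equals $\beta_n(\boldsymbol{\theta})$. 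For each such test function, Theorem \ref{Theorem: Pointwise ergodic theorem with rate for nonsmooth functions} produces the desired almost-sure limit, and a separability argument over a countable dense family of $\varphi$'s upgrades pointwise convergence to weak-$*$ convergence of measures in (3).

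\textbf{Main obstacle.} The hardest part is Step 1 together with the almost smoothness verification required to feed these functions into Theorem \ref{Theorem: Pointwise ergodic theorem with rate for nonsmooth functions}. The cross-section indicator $\mathbf{1}_{\mathcal{S}_\delta}$, the logarithmic weight in (2) and the product $\varphi(\beta)\mathbf{1}_{\mathcal{S}_\delta}$ in (3) are discontinuous precisely along the codimension-one boundary of $\mathcal{S}_\delta$, so they cannot be inserted into an ergodic result for smooth functions such as \cite[Theorem 1.1]{Kleinbock_Shi_Barak_2017_Pointwise_equidistribution_with_an_error_rate_and_with_respect_to_unbounded_functions_MR3606456}. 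To conclude via Theorem \ref{Theorem: Pointwise ergodic theorem with rate for nonsmooth functions} one must sandwich them between smooth approximants with an $L^1$-error compatible with Definition \ref{Definition: almost smooth functions}; this requires both a Margulis-type non-divergence estimate to bound the measure of tubular neighbourhoods of $\partial\mathcal{S}$ inside the cusp of $\mathfrak{X}_{d+1}$, and a choice of $\mathcal{S}$ whose boundary is piecewise smooth in the $\boldsymbol{w}$-weighted geometry. It is precisely here that the upgrade from smooth to almost smooth test functions furnished by Theorem \ref{Theorem: Pointwise ergodic theorem with rate for nonsmooth functions} is indispensable.
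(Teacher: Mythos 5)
Your Step~1 and the treatment of (1) and (3) are aligned in spirit with what the paper does: one constructs a cross-section $\mathcal{S}_1$ for the weighted flow $\{a_t\}$ together with the distinguished subset $\mathcal{B}$ of lattices that record a new $\boldsymbol{w}$-best approximation, shows $\mathcal{B}$ is tempered and $\mu_{\mathcal{S}_1}$-Jordan-measurable, and applies the abstract transference result of Shapira--Weiss (Proposition~\ref{Proposition: limit behavior of visiting times to tempered set}) converting $(a_t,\mu)$-genericity of the orbit $\{a_t\Lambda_{\boldsymbol{\theta}}\}$ into equidistribution of its visits to $\mathcal{B}$. Two remarks on your framing, however. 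First, genericity is tested against a countable dense family of compactly supported continuous functions, so it already follows from the original Kleinbock--Shi--Weiss theorem for \emph{smooth} test functions; the paper says explicitly that either Theorem~\ref{Theorem: Pointwise ergodic theorem with rate for nonsmooth functions} or the smooth version can be used. Your ``Main obstacle'' paragraph, which asserts the almost-smooth upgrade is indispensable here, is therefore overstated: the paper never directly approximates a flow-box indicator by almost smooth functions, since the abstract Shapira--Weiss machinery of reasonable cross-sections, temperedness, and Jordan measurability handles that transition without any smoothing.

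The genuine gap is in your route to (2). You propose to ``average a logarithmic weight supported on $\mathcal{S}_\delta$ and divide by (1).'' The relevant weight is $\Lambda \mapsto \log r(\Lambda)$ with $r(\Lambda)$ as in \eqref{equation: definition of r(lambda)}; writing $r_n(\boldsymbol{\theta}) = \|q_n\boldsymbol{\theta}-\boldsymbol{p}_n\|_{\boldsymbol{w}}$ one has $\log r_n = \log(q_n r_n) - \log q_n$, so (2) is equivalent to $\lim_n \tfrac{1}{n}\log(q_n(\boldsymbol{\theta}) r_n(\boldsymbol{\theta}))=0$. But $\log r(\Lambda)$ is unbounded below (it tends to $-\infty$ as $\Lambda$ escapes into the cusp), hence it is \emph{not} almost smooth in the sense of Definition~\ref{Definition: almost smooth functions} (which requires uniformly bounded approximants), and the $(a_t,\mu_{\mathcal{S}_1}|_{\mathcal{B}})$-genericity supplied by Proposition~\ref{Proposition: limit behavior of visiting times to tempered set} is likewise formulated only for Jordan-measurable (hence bounded) test sets. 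You cannot feed an unbounded integrand into either result without a separate uniform-integrability or tail estimate controlling cusp excursions. The paper resolves this by an entirely different Kleinbock--Margulis/Borel--Cantelli argument in Section~\ref{Section: Borel-Cantelli}: using the \emph{effective} equidistribution estimate of Corollary~\ref{Corollary: effective equidistribution} together with smoothings of the cusp-neighborhood indicators $\chi_{K_{\epsilon}}$, it shows that for a.e.\ $\boldsymbol{\theta}$ one has $q_n(\boldsymbol{\theta})r_n(\boldsymbol{\theta}) > n^{-\alpha}$ for all large $n$; combined with (1) and the pigeonhole upper bound $q_n r_n\leq 1$ from Minkowski's theorem, this gives (2). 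Your sketch would need to supply this quantitative shrinking-target ingredient — simply averaging the logarithmic weight does not close the argument.
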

Note that when $\boldsymbol{w}={^t(}1/d,\cdots,1/d )$ is of equal weight, Theorem \ref{Theorem: weighted Levy-Khintchine constant} recovers \cite[Theorem 1]{Cheung_Chevallier_2019_Levy_Khintchin_Theorem} in the case of best approximations for vectors. In this article we are not intended to compute the constant $L_d(\boldsymbol{w})$. We remark that to obtain Theorem \ref{Theorem: weighted Levy-Khintchine constant}, following the strategy of \cite{Shapira_Weiss_2022_Geometric_and_arithmetic_aspects} we consider a suitable open subset $\mathcal{B}$ (see (\ref{align: definition of open set B in the cross section})) of certain cross-section in $\mathfrak{X}_{d+1}$ and investigate the visit times of a (weighted) diagonal flow to $\mathcal{B}$. The proof combines the ideas from \cite{Shapira_Weiss_2022_Geometric_and_arithmetic_aspects}, \cite[Section 6]{Cheung_Chevallier_2019_Levy_Khintchin_Theorem} and Kleinbock-Margulis approach \cite{Kleinbock_Margulis_1999_Logarithm_laws_MR1719827} (see also \cite{Kleinbock_Strombergsson_Yu_2022_A_measure_estimate_MR4500198}).

Theorem \ref{Theorem: weighted Levy-Khintchine constant} is relevant to Question 3 raised in \cite[Section 10]{Cheung_Chevallier_2019_Levy_Khintchin_Theorem}, yet we caution the reader that Theorem \ref{Theorem: weighted Levy-Khintchine constant} did not give a solution to this question. In the appendix we will reveal the difference between Theorem \ref{Theorem: weighted Levy-Khintchine constant} and Question 3 in \cite{Cheung_Chevallier_2019_Levy_Khintchin_Theorem}.

\subsection{Notations and conventions}

\begin{itemize}
    \item Throughout this article, for two quantities $A$ and $B$, by $A\ll B$ or $A=O(B)$ we mean that there is a constant $c>0$ such that $A\leq cB$. We will use subscripts to indicate the dependence of the constant on parameters. The notation $A\asymp B$ means $A\ll B\ll A$. 

\item Unless specified, we will use boldsymbol letters such as $\boldsymbol{v}$ to denote vectors in Euclidean space. All vectors in this article are assumed to be column vectors. For a column vector $\boldsymbol{v}$, $^t\boldsymbol{v}$ will be its transpose.

\item For a topological space $X$, and any subset $E\subset F\subset X$, $E^0$ is the interior of $E$ with respect to the topology of $X$. $cl_F(E)$ is the closure of $E$ with respect to the induced topology in $F$. $\partial_F E$ is the topological boundary of $E$ in $F$. Sometimes we will write $cl_F(E)$ as $cl(E)$, and $\partial_F E$ as $\partial E$ when $F$ is clear in the context.

\end{itemize}

\subsection{Overview of the paper}
In Section \ref{Section: Preliminaries}, we collect some known results which will be used in later sections. In Section \ref{Section: pointwise ergodic theorem}, we prove the pointwise equidistribution theorem for almost smooth functions with an error rate. In Section \ref{Section: cross sections}, we recall the techniques developed by Shapira-Weiss \cite{Shapira_Weiss_2022_Geometric_and_arithmetic_aspects}, and then define a suitable cross-section and investigate visit times of a weighted diagonal flow to certain subset of this cross-section. The main result of this section is that Theorem \ref{Theorem: weighted Levy-Khintchine constant} (1) holds. Section \ref{Section: Borel-Cantelli} is contributed to proving (2) (3) of Theorem \ref{Theorem: weighted Levy-Khintchine constant}. In the appendix, we will illustrate the difference between Theorem \ref{Theorem: weighted Levy-Khintchine constant} and Question 3 in \cite{Cheung_Chevallier_2019_Levy_Khintchin_Theorem}.

\noindent$\textbf{Acknowledgment.}$
We thank Professor Yitwah Cheung for helpful discussions.

\section{Preliminaries}\label{Section: Preliminaries}

In this section, we collect some known results which we will need in later sections. Let $m,n$ be two positive integers and $N=m+n$. Let $G=SL_{N}(\mathbb{R})$, $\Gamma=SL_N(\mathbb{Z})$, and $\mathfrak{X}_N=G/\Gamma$. We denote by $\mu_N$ the unique $G$-invariant probability measure on $\mathfrak{X}_N$.

\subsection{Sobolev norm on $\mathfrak{X}_N$}

Let $\mathfrak{g}=\mathfrak{sl}_N$ be the Lie algebra of $G$. For each $Y\in \mathfrak{g}$, let $\mathcal{D}_Y$ be the corresponding Lie derivative on $C^{\infty}(G)$ defined by 
\[\mathcal{D}_Y(f)(g):=\frac{d}{dt}f(\exp(tY)g)|_{t=0}, \quad \forall f\in C^{\infty}(G),\]
where $\exp:\mathfrak{g}\to G$ denotes the usual exponential map from $\mathfrak{g}$ to $G$. This operator extends to $C_c^{\infty}(\mathfrak{X}_N)$ naturally. For $a\in \mathbb{N}$ and $Y_1,\cdots,Y_a\in \mathfrak{g}$, the monomial $Z=Y_1^{l_1}\cdots Y_a^{l_a}$ defines a differential operator of degree $\deg(Z)=l_1+\cdots+l_a$ by
\[\mathcal{D}_Z:=D_{Y_1}^{l_1}\circ \cdots \circ \mathcal{D}_{Y_a}^{l_a},\]
where $\mathcal{D}_{Y_i}^{l_i}$ means $\mathcal{D}_{Y_i}$ is composed with itself $l_i$-times. 

Fix an basis $\{Y_1,\cdots,Y_N\}$ of $\mathfrak{g}$. For any $l\in \mathbb{N}$, we define the Sobolev norm on $C_c^{\infty}(\mathfrak{X}_N)$ by 
\begin{equation}\label{equation: definition of Sobolev norm}
    \norm{f}_{H^l}:=\left( \sum_{\deg(Z)\leq l}\int_{\mathfrak{X}_N}|\mathcal{D}_Z(f)|^2 d\mu_N \right)^{1/2},\quad \forall f\in C_c^{\infty}(\mathfrak{X}_N),
\end{equation}
where the summation is over all monomials $Z$ in $\{Y_1,\cdots,Y_N\}$ with degree no greater than $l$.

Let $\dist_G(\cdot,\cdot)$ be a right invariant Riemannian metric on $G$. It induces a Riemannian metric on $\mathfrak{X}_N$, which we denote it by $\dist(\cdot,\cdot)$. We define the Lipschitz norm on $C_c^{\infty}(\mathfrak{X}_N)$ with respect to $\dist(\cdot,\cdot)$ by
\[\norm{f}_{\Lip}:=\sup \left\{\frac{|f(x_1)-f(x_2)|}{\dist(x_1,x_2)}: x_1,x_2\in \mathfrak{X}_N, x_1\neq x_2 \right\}, \quad f\in C_c^{\infty}(\mathfrak{X}_N).\]
We also denote $\norm{\cdot}_{\infty}$ to be the uniform norm (supremum norm) on $C_c(\mathfrak{X}_N)$. For any $f\in C_c^{\infty}(\mathfrak{X}_N)$, and $l\in \mathbb{N}$, the following quantity will be important later:
\begin{equation}\label{equation: definition of Nl}
    \mathcal{N}_l(f):=\max\left\{\norm{f}_{\infty}, \norm{f}_{\Lip},\norm{f}_{H^l}\right\}.
\end{equation}

\subsection{Effective equidistribution and doubly mixing}\label{section: equidistribution and doubly mixing}
Let $\boldsymbol{a}\in \mathbb{R}^m$ and $\boldsymbol{b}\in \mathbb{R}^n$ be weight vectors. Define the one-parameter diagonal subgroup $\{a_t:t\in \mathbb{R}\}$ of $G$ as in (\ref{equation: definition of a t}).
Denote
\begin{align}
    \mathcal{U}:=\left\{u(A):=\begin{pmatrix}
    I_m & A\\
    0 & I_n
\end{pmatrix} \in G: A\in M_{m,n}\right\} ,\label{align: definition of mathcal U}
\end{align}
and
\begin{align*}
    \mathcal{Y}:=\{u(A) \text{ mod }\Gamma: A\in M_{m,n}(\mathbb{R})\}\cong M_{m,n}(\mathbb{R}/\mathbb{Z}).
\end{align*}
We record the following effective equidistribution and doubly mixing result for $a_t$-translation of $\mathcal{U}$ (and $\mathcal{Y}$).
\begin{theorem}(\cite[Theorem 2.2]{Bjorklund_Gorodnik_2019_Central_limit_theorems_MR3985114})\label{Theorem: effective mixing}
There exists $l\in \mathbb{N}$ and $\delta>0$ such that for any compact $\Omega\subset \mathcal{U}$, $\phi\in C_c^{\infty}(\mathcal{U})$ with $\supp \phi\subset \Omega$, $f, f_1,f_2\in C_c^{\infty}(\mathfrak{X}_N)$, $\Lambda\in \mathfrak{X}_N$ and $t,t_1,t_2>0$,
\begin{align*}
     \int_{\mathcal{U}} f(a_t u(A) \Lambda)\phi(A)dA=\mu_N(f)\cdot\int_{\mathcal{U}}\phi dA +O_{\Lambda,\Omega,\phi}(e^{-\delta t} \mathcal{N}_l(f));\\
\int_{\mathcal{U}} f_1(a_{t_1}u(A)\Lambda)f_2(a_{t_2}u(A)\Lambda)\phi(A)dA=\mu_{N}(f_1) \mu_N(f_2)\cdot\int_{\mathcal{U}}\phi dA\\ +O_{\Lambda,\Omega,\phi}(e^{-\delta\min\{t_1,t_2,|t_1-t_2|\}} \mathcal{N}_l(f_1)\mathcal{N}_l(f_2)).
\end{align*}
\end{theorem}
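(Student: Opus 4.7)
The plan is to reduce both identities to effective mixing statements for the diagonal flow on $\mathfrak{X}_N$ (which follow from the spectral gap for $L^2_0(\mathfrak{X}_N)$ under $G$) via the Margulis thickening trick. Decompose $G$ locally as $W\cdot\mathcal{U}$, where $W=U^-A_0$ is the product of the stable horospherical subgroup $U^-$ for $a_t$ and its centralizer $A_0$; fix a smooth bump $\chi_\rho\in C_c^\infty(W)$ supported in a $\rho$-neighborhood of the identity with $\int\chi_\rho=1$ and $\|\chi_\rho\|_{C^l}=O(\rho^{-\kappa})$. For $\rho$ smaller than the injectivity radius at $\Lambda$ (which forces the $\Lambda$-dependence of the constants), the map $(w,u(A))\mapsto w\,u(A)\Lambda$ is a diffeomorphism onto its image, so pushing $\chi_\rho(w)\phi(A)\,dw\,dA$ down yields a smooth density $\Psi_\rho$ on $\mathfrak{X}_N$ with $\mu_N(\Psi_\rho)=\int_{\mathcal{U}}\phi\,dA$ and $\mathcal{N}_l(\Psi_\rho)\ll_\phi\rho^{-\kappa}$.

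For the first identity, I would use that $\mathrm{Ad}(a_t)$ contracts $U^-$ and fixes $A_0$, so $a_t w u(A)\Lambda$ stays within $O(\rho)$ of $a_t u(A)\Lambda$ for $w$ in the support of $\chi_\rho$. Hence
\begin{align*}
\int_{\mathcal{U}} f(a_t u(A)\Lambda)\phi(A)\,dA=\int_{\mathfrak{X}_N} f(a_t x)\Psi_\rho(x)\,d\mu_N(x)+O\!\bigl(\rho\,\mathcal{N}_l(f)\bigr).
\end{align*}
The right-hand integral is a matrix coefficient for the $a_t$-action, and the spectral gap for $L^2_0(\mathfrak{X}_N)$ (Howe--Moore / Kleinbock--Margulis style decay) gives
\begin{align*}
\int f(a_t x)\Psi_\rho(x)\,d\mu_N(x)=\mu_N(f)\mu_N(\Psi_\rho)+O\!\bigl(e^{-\delta_0 t}\mathcal{N}_l(f)\mathcal{N}_l(\Psi_\rho)\bigr).
\end{align*}
Combining and optimizing $\rho=e^{-\delta_0 t/(\kappa+1)}$ produces the stated error $e^{-\delta t}\mathcal{N}_l(f)$.

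For the second identity, the same thickening reduces matters to
\begin{align*}
\int_{\mathfrak{X}_N} f_1(a_{t_1}x)f_2(a_{t_2}x)\Psi_\rho(x)\,d\mu_N(x)+O\!\bigl(\rho\,\mathcal{N}_l(f_1)\mathcal{N}_l(f_2)\bigr).
\end{align*}
Using $a_t$-invariance of $\mu_N$ to substitute $y=a_{t_1}x$, this becomes $\int f_1(y)f_2(a_{s}y)\,\widetilde\Psi(y)\,d\mu_N(y)$ with $s=t_2-t_1$ and $\widetilde\Psi=\Psi_\rho\circ a_{-t_1}$. Applying the single-mixing bound once in the variable $s$ (to decouple $f_2$ from $f_1\widetilde\Psi$) and once at time $t_1$ (after undoing the change of variables, to decouple $f_1$ from $\Psi_\rho$) yields a product $\mu_N(f_1)\mu_N(f_2)\mu_N(\Psi_\rho)$ plus an error of order $e^{-\delta_0\min\{t_1,s\}}$ times Sobolev norms, which equals $e^{-\delta_0\min\{t_1,t_2,|t_1-t_2|\}}$ when $t_1,t_2>0$.

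The main obstacle is book-keeping the Sobolev norm of $\widetilde\Psi=\Psi_\rho\circ a_{-t_1}$: because $\mathrm{Ad}(a_{-t_1})$ expands along $U^-$, one has $\mathcal{N}_l(\widetilde\Psi)\ll e^{Clt_1}\mathcal{N}_l(\Psi_\rho)$, which would destroy the gain from mixing if handled naively. The cleanest fix, and the one implicit in Bj\"orklund--Gorodnik, is to prove and invoke an effective double-mixing estimate directly on $\mathfrak{X}_N$,
\begin{align*}
\int f_1(a_{t_1}x)f_2(a_{t_2}x)h(x)\,d\mu_N=\mu_N(f_1)\mu_N(f_2)\mu_N(h)+O\!\bigl(e^{-\delta_0\min\{t_1,t_2,|t_1-t_2|\}}\mathcal{N}_l(f_1)\mathcal{N}_l(f_2)\mathcal{N}_l(h)\bigr),
\end{align*}
which is established from the spectral gap by a representation-theoretic three-point correlation argument that avoids composing $\Psi_\rho$ with $a_{-t_1}$ in Sobolev space. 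Applying this to $h=\Psi_\rho$ and then optimizing $\rho$ against the resulting error (as in the first part) completes the proof.
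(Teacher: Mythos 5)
The paper does not prove this theorem: it is quoted directly from Bj\"orklund--Gorodnik (their Theorem 2.2). So there is no ``paper's own proof'' to compare against; what follows evaluates your sketch on its own terms.

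There is a genuine gap in the thickening step, and it is precisely the place where general weights differ from equal weights. You propose to decompose $G$ locally as $W\cdot\mathcal{U}$ with $W=U^-A_0$, the product of the stable horospherical subgroup and the centralizer of $a_t$, and to use that $\mathrm{Ad}(a_t)$ does not expand $W$. But the subgroup $\mathcal{U}$ here is the $mn$-dimensional group of matrices $u(A)$ with $A\in M_{m,n}$, and this coincides with the full unstable horospherical subgroup of $a_t$ \emph{only when the weights $\boldsymbol{a},\boldsymbol{b}$ are equal}. For general weights $a_1\geq\cdots\geq a_m>0$ and $b_1\geq\cdots\geq b_n>0$ the Levi factor $L=S(GL_m\times GL_n)$ contains off-diagonal directions with roots $a_i-a_j$ and $b_j-b_i$, half of which are expanded by $\mathrm{Ad}(a_t)$. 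A complement $W$ to $\mathcal{U}$ in $G$ must therefore contain some of these expanding $L$-directions (a dimension count shows $U^-A_0$ is too small once any two $a_i$'s or $b_j$'s are distinct). For $w$ in such a direction, $a_twa_{-t}$ grows like $e^{ct}$ for some $c>0$, so the key step ``$a_twu(A)\Lambda$ stays within $O(\rho)$ of $a_tu(A)\Lambda$'' fails; the Lipschitz error becomes $O(\rho\,e^{ct}\,\mathcal{N}_l(f))$, and since $\mathcal{N}_l(\Psi_\rho)\gg\rho^{-\kappa}$ with $\kappa$ depending on the Sobolev order, the optimization of $\rho$ against the mixing rate $\delta_0$ no longer yields an exponential gain in general. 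The weighted case therefore needs an additional idea (for instance, arranging the thickening bump so that it is never conjugated by $a_t$, or exploiting that $L$ normalizes $\mathcal{U}$ to reorganize the integral), and your sketch elides it.

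Your observation about the double correlation is correct and important: naively applying the single-mixing bound twice produces a factor $\mathcal{N}_l(\Psi_\rho\circ a_{-t_1})$, which blows up, and the right fix is to invoke an effective \emph{three-point correlation} estimate on $\mathfrak{X}_N$ proved directly from the spectral gap. This is indeed the mechanism in Bj\"orklund--Gorodnik. But that estimate does not by itself repair the thickening issue above, because the comparison between the $\mathcal{U}$-orbit integral and the smoothed integral against $\Psi_\rho$ has the same defect for the double correlation as for the single one.
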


\begin{corollary}\label{Corollary: effective equidistribution}\cite[Corollary 6.4]{Kleinbock_Strombergsson_Yu_2022_A_measure_estimate_MR4500198}
Let $l\in \mathbb{N}$ and $\delta>0$ be as in Theorem \ref{Theorem: effective mixing}. Then for any $f\in C_c^{\infty}(\mathfrak{X}_N)$ and $t>0$,
\begin{equation*}
    \int_{\mathcal{Y}}f(a_t u(A)\mathbb{Z}^{d+1})dA=\mu_N(f)+O(e^{-\delta t}\mathcal{N}_l(f)).
\end{equation*}
\end{corollary}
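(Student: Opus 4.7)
The plan is to deduce Corollary \ref{Corollary: effective equidistribution} from Theorem \ref{Theorem: effective mixing} by rewriting the torus integral over $\mathcal{Y}\cong M_{m,n}(\mathbb{R}/\mathbb{Z})$ as an integral on $\mathcal{U}$ against a fixed smooth compactly supported weight, and then applying Theorem \ref{Theorem: effective mixing} at the base point $\Lambda=\mathbb{Z}^N$. The crucial observation is that for any $B\in M_{m,n}(\mathbb{Z})$ one has $u(B)\in\Gamma$, so $u(A+B)\mathbb{Z}^N=u(A)\mathbb{Z}^N$; consequently the integrand $A\mapsto f(a_tu(A)\mathbb{Z}^N)$ is $M_{m,n}(\mathbb{Z})$-periodic on $\mathcal{U}$ and descends to $\mathcal{Y}$.

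Next I would choose, once and for all, a non-negative bump $\psi\in C_c^\infty(M_{m,n}(\mathbb{R}))$ with $\psi(0)>0$ and form its $M_{m,n}(\mathbb{Z})$-periodization
\[\Psi(A):=\sum_{B\in M_{m,n}(\mathbb{Z})}\psi(A+B).\]
Since $\Psi$ is smooth, $M_{m,n}(\mathbb{Z})$-periodic, and bounded below by a positive constant, the function $\phi_0(A):=\psi(A)/\Psi(A)$ lies in $C_c^\infty(\mathcal{U})$ and satisfies $\sum_{B\in M_{m,n}(\mathbb{Z})}\phi_0(A+B)=1$ identically. A standard unfolding then gives $\int_{\mathcal{U}}\phi_0\,dA=1$ on the one hand, and on the other hand
\[\int_{\mathcal{Y}}f(a_tu(A)\mathbb{Z}^N)\,dA = \int_{[0,1]^{mn}}f(a_tu(A)\mathbb{Z}^N)\sum_{B}\phi_0(A+B)\,dA = \int_{\mathcal{U}}f(a_tu(A)\mathbb{Z}^N)\phi_0(A)\,dA,\]
using the $M_{m,n}(\mathbb{Z})$-periodicity of the first factor.

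Finally, I would apply Theorem \ref{Theorem: effective mixing} with $\Lambda=\mathbb{Z}^N$, $\phi=\phi_0$, and $\Omega=\supp\phi_0$ to conclude
\[\int_{\mathcal{Y}}f(a_tu(A)\mathbb{Z}^N)\,dA = \mu_N(f)\cdot\int_{\mathcal{U}}\phi_0\,dA + O(e^{-\delta t}\mathcal{N}_l(f)) = \mu_N(f)+O(e^{-\delta t}\mathcal{N}_l(f)).\]
The implied constant depends only on the fixed data $\mathbb{Z}^N$, $\supp\phi_0$, and $\phi_0$, and hence is absolute (depending only on $m,n$). There is no substantive obstacle in this argument; the only piece of content beyond Theorem \ref{Theorem: effective mixing} is the periodization construction of $\phi_0$, which converts the compact integral on $\mathcal{Y}$ into a weighted compactly supported integral on $\mathcal{U}$ to which Theorem \ref{Theorem: effective mixing} directly applies.
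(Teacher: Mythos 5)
The paper does not present a proof of this corollary; it simply cites \cite[Corollary 6.4]{Kleinbock_Strombergsson_Yu_2022_A_measure_estimate_MR4500198}. Your derivation from Theorem \ref{Theorem: effective mixing} via periodization and unfolding is a correct and self-contained way to obtain it: the observation that $u(B)\in\Gamma$ for $B\in M_{m,n}(\mathbb{Z})$, hence that $A\mapsto f(a_tu(A)\mathbb{Z}^N)$ descends to $\mathcal{Y}$, is exactly what makes the torus integral expressible as a weighted integral over $\mathcal{U}$ to which Theorem \ref{Theorem: effective mixing} applies with the fixed base point $\Lambda=\mathbb{Z}^N$. The unfolding identities $\sum_{B}\phi_0(A+B)\equiv 1$, $\int_{\mathcal{U}}\phi_0\,dA=1$, and $\int_{\mathcal{Y}}f(a_tu(A)\mathbb{Z}^N)\,dA=\int_{\mathcal{U}}f(a_tu(A)\mathbb{Z}^N)\phi_0(A)\,dA$ are all valid, and the resulting implied constant is indeed absolute since $\Lambda$, $\phi_0$, and $\Omega=\supp\phi_0$ are fixed once and for all.

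One small slip to repair: requiring only $\psi(0)>0$ does not guarantee that the periodization $\Psi$ is bounded below by a positive constant. If $\psi$ is supported in a small ball around $0$, then $\Psi$ can vanish on part of the fundamental domain, and $\phi_0=\psi/\Psi$ would be undefined there. You should instead choose $\psi\geq 0$ in $C_c^\infty(M_{m,n}(\mathbb{R}))$ with $\psi>0$ on $[0,1]^{mn}$ (or on any set whose $M_{m,n}(\mathbb{Z})$-translates cover $M_{m,n}(\mathbb{R})$); then $\Psi$ is smooth, periodic, and strictly positive, and the rest of the argument goes through unchanged.
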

Now for any $r>0$, consider a neighborhood of identity element in $G$ defined by 
\begin{equation}\label{equation: definition of neighborhood of G}
    \mathcal{O}_r:=\{g\in G: \max\{\norm{g-Id},\norm{g^{-1}-Id}\}< r\},
\end{equation}
where $\norm{\cdot}$ is the supremum norm on $M_N(\mathbb{R})$. We remark that there exists a sufficiently small neighborhood $\mathcal{V}$ of $Id$ in $G$ such that for any $g\in \mathcal{V}$,
\[\norm{g-Id}\asymp_d \norm{g^{-1}-Id}\asymp_d \dist_G(g,Id),\]
where $\dist_G$ is a fixed right invariant Riemannian metric on $G$.

\begin{lemma}\label{Lemma: inclusion about Or}
Given $0<r,\epsilon<1$, we have 
\begin{equation}\label{equation: inclusions of neighborhood}
    \mathcal{O}_{\epsilon}\cdot \mathcal{O}_r\subset \mathcal{O}_{r+\epsilon+r\epsilon}.
\end{equation}

\end{lemma}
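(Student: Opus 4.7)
The plan is to exploit the submultiplicativity of the supremum (operator) norm on $M_N(\mathbb{R})$, together with $\norm{Id}=1$, via the telescoping identity
\[
g_1 g_2 - Id \;=\; g_1(g_2-Id) + (g_1-Id).
\]
This identity, and its mirror version obtained by inverting, reduce the containment $\mathcal{O}_\epsilon \cdot \mathcal{O}_r \subset \mathcal{O}_{r+\epsilon+r\epsilon}$ to a one-line triangle inequality applied on each of the two sides of the definition of $\mathcal{O}_{r+\epsilon+r\epsilon}$.

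First I would fix arbitrary $g_1\in\mathcal{O}_\epsilon$ and $g_2\in\mathcal{O}_r$, set $h_1:=g_1-Id$ and $h_2:=g_2-Id$, so that $\norm{h_1}<\epsilon$ and $\norm{h_2}<r$ by definition (\ref{equation: definition of neighborhood of G}). In particular $\norm{g_1}\le \norm{Id}+\norm{h_1}<1+\epsilon$. Using the telescoping identity above and the triangle inequality together with submultiplicativity, I would obtain
\[
\norm{g_1 g_2 - Id} \;\le\; \norm{g_1}\,\norm{h_2} + \norm{h_1} \;<\; (1+\epsilon)\,r + \epsilon \;=\; r+\epsilon+r\epsilon.
\]

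Next I would treat the inverse symmetrically. Since $(g_1 g_2)^{-1}=g_2^{-1}g_1^{-1}$ and since the defining inequalities of $\mathcal{O}_\epsilon$ and $\mathcal{O}_r$ are built to be symmetric in $g$ and $g^{-1}$, we have $\norm{g_1^{-1}-Id}<\epsilon$ and $\norm{g_2^{-1}-Id}<r$, so $\norm{g_2^{-1}}<1+r$. Applying the telescoping identity to $g_2^{-1}$ and $g_1^{-1}$ in place of $g_1$ and $g_2$ gives
\[
\norm{(g_1 g_2)^{-1} - Id} \;\le\; \norm{g_2^{-1}}\,\norm{g_1^{-1}-Id} + \norm{g_2^{-1}-Id} \;<\; (1+r)\,\epsilon + r \;=\; r+\epsilon+r\epsilon.
\]
Combining the two bounds yields $g_1 g_2 \in \mathcal{O}_{r+\epsilon+r\epsilon}$, which is the desired inclusion.

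There is essentially no obstacle here: the only thing to watch is that both defining inequalities of $\mathcal{O}_{r+\epsilon+r\epsilon}$ must be verified for the product, and the symmetric formulation of $\mathcal{O}_r$ is exactly what makes the same computation work on each side. The algebraic identity $(1+r)(1+\epsilon)-1 = r+\epsilon+r\epsilon$ explains why the stated bound is sharp for this method.
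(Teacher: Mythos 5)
Your proof is correct and uses exactly the same decomposition $g_1g_2 - Id = g_1(g_2-Id) + (g_1-Id)$ (with $g_1\in\mathcal{O}_\epsilon$, $g_2\in\mathcal{O}_r$) together with submultiplicativity and $\norm{Id}=1$ that the paper uses; the paper writes $\norm{hg-Id}=\norm{hg-h+h-Id}\le\norm{h}\norm{g-Id}+\norm{h-Id}$ and handles the inverse side by symmetry, which is what you do explicitly. The only cosmetic difference is that you spell out the computation for $(g_1g_2)^{-1}$ whereas the paper dismisses it as "proved similarly."
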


\begin{proof}
Let $g\in \mathcal{O}_r, h\in \mathcal{O}_{\epsilon}$. Note that 
\[1-\epsilon\leq \norm{h}\leq 1+\epsilon.\]
We have
\begin{align*}
    \norm{hg-Id}&=\norm{hg-h+h-Id}\leq \norm{h}\norm{g-Id}+\norm{h-Id}\leq r+\epsilon+r\epsilon.
\end{align*}
The inequality $\norm{g^{-1}h^{-1}-Id}\leq r+\epsilon+r\epsilon$ is proved similarly. So (\ref{equation: inclusions of neighborhood}) follows.

\end{proof}

\begin{lemma}\label{Lemma: bump function}\cite[Lemma 6.6]{Kleinbock_Strombergsson_Yu_2022_A_measure_estimate_MR4500198}
For every $0<r<1$, there exists a function $\theta_r\in C_c^{\infty}(G)$ such that $\theta_r\geq 0$, $\supp(\theta_r)\subset \mathcal{O}_r$, $\int_G \theta_r(g)d\mu_G(g)=1$, and $\norm{\mathcal{D}_Z(\theta_r)}_{L^{\infty}(G)}\ll_{l,N} r^{1-N^2-l}$ for every monomial $Z=Y_1^{l_1}\cdots Y_a^{l_a}$, where $\mu_G$ is the Haar measure on $G$, $l=\deg (Z)=l_1+\cdots+l_a$.
    
\end{lemma}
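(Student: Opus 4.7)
The plan is to construct $\theta_r$ by transplanting a scaled Euclidean bump function from the Lie algebra $\mathfrak{g}$ to $G$ via the exponential map and then normalizing so that $\int_G \theta_r\, d\mu_G = 1$. The key numerical observation is that $\dim G = N^2 - 1$, so a standard scaling by $r$ in $\mathfrak{g}$ produces the $r^{1 - N^2}$ factor after normalization, and each additional Lie derivative contributes an extra $r^{-1}$ via the chain rule.

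First, I would fix once and for all $r_0 > 0$ small enough that $\exp \colon B(0, 2r_0) \to \mathfrak{g}$ is a diffeomorphism onto its image and the Taylor expansion of $\exp$ yields $\norm{\exp(X) - Id} \leq 2\norm{X}$ and $\norm{\exp(X)^{-1} - Id} \leq 2\norm{X}$ for $X \in B(0, 2r_0)$. Fix a nonnegative $\psi_0 \in C_c^\infty(\mathfrak{g})$ with $\supp \psi_0 \subset B(0, 1/4)$ and $\psi_0(0) > 0$. For $0 < r < r_0$ (the remaining range $r_0 \leq r < 1$ is absorbed into the constants) set $\psi_r(X) := \psi_0(X/r)$, so $\supp \psi_r \subset B(0, r/4)$ and, by the chosen Taylor bound, $\exp(\supp \psi_r) \subset \mathcal{O}_r$. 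Define
\begin{equation*}
\tilde\theta_r(g) := \begin{cases} \psi_r(\log g), & g \in \exp(B(0, r_0)), \\ 0, & \text{otherwise,} \end{cases}
\end{equation*}
and set $\theta_r := \tilde\theta_r / \int_G \tilde\theta_r \, d\mu_G$. Nonnegativity, compact support in $\mathcal{O}_r$, and unit integral are immediate from the construction.

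Next, let $J$ be the smooth density on $B(0, 2r_0)$ such that $\exp_\ast(J\, dX) = d\mu_G$; then $J(0) = 1$, so $J \asymp 1$ on $B(0, r_0)$. A change of variables gives
\begin{equation*}
\int_G \tilde\theta_r\, d\mu_G = \int_{\mathfrak{g}} \psi_0(X/r) J(X)\, dX = r^{N^2-1} \int_{\mathfrak{g}} \psi_0(Y) J(rY)\, dY \asymp r^{N^2 - 1},
\end{equation*}
so the normalizing constant is $\asymp r^{-(N^2 - 1)}$, which already yields the claimed bound in the case $l = 0$. For the higher-order derivative bounds, in the exponential chart each Lie derivative $\mathcal{D}_{Y_i}$ transfers to a first-order differential operator $V_i = \sum_j a_{ij}(X) \partial_{X_j}$ whose coefficients $a_{ij}$ are smooth functions on the fixed compact set $B(0, r_0)$ and hence bounded there together with all their partial derivatives, independently of $r$. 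Expanding a degree-$l$ monomial $\mathcal{D}_Z$ via the Leibniz rule produces an $O_{l,N}(1)$-size linear combination of terms of the form (a product of derivatives of the $a_{ij}$) times $\partial^\alpha \psi_r$ with $|\alpha| \leq l$. Since $\partial^\alpha \psi_r(X) = r^{-|\alpha|} (\partial^\alpha \psi_0)(X/r)$, one has $\norm{\partial^\alpha \psi_r}_\infty \leq r^{-|\alpha|} \norm{\partial^\alpha \psi_0}_\infty$. Combining these estimates with the normalizing factor $\asymp r^{-(N^2 - 1)}$ gives $\norm{\mathcal{D}_Z \theta_r}_\infty \ll_{l, N} r^{-(N^2-1) - l} = r^{1 - N^2 - l}$, as required.

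No step is really hard. The only mildly delicate point is keeping track of how Lie derivatives pass through the exponential chart and verifying that the coefficients $a_{ij}$ and their derivatives remain bounded uniformly in $r$; this is automatic because everything lives on the fixed compact neighborhood $B(0, r_0) \subset \mathfrak{g}$, and all the $r$-dependence is isolated in the scaling of $\psi_0$.
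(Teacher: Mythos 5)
The proposal is correct. The paper does not prove this lemma; it cites it from Kleinbock–Str\"ombergsson–Yu, and your construction is precisely the standard one used there: push a rescaled Euclidean bump forward through the exponential chart, normalize, and note that the Jacobian and the coefficients of the transplanted Lie derivatives are uniformly bounded on a fixed compact neighborhood of the origin in $\mathfrak{g}$, so all $r$-dependence is concentrated in the scaling $X \mapsto X/r$, which accounts for the factor $r^{-(N^2-1)}$ from the volume of the support after normalization and the factor $r^{-l}$ from $l$ applications of the chain rule.

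One very small remark on bookkeeping: you should also spell out, as you do implicitly, that the remaining range $r_0 \le r < 1$ is handled by taking $\theta_r := \theta_{r_0}$ there. Since $\mathcal{O}_{r_0} \subset \mathcal{O}_r$, the support condition holds, and because $1 - N^2 - l < 0$ one has $r^{1-N^2-l} \ge 1$ for $r \in [r_0,1)$, so the fixed quantity $\norm{\mathcal{D}_Z(\theta_{r_0})}_{L^\infty(G)}$ is trivially $\ll_{l,N} r^{1-N^2-l}$. With that, the argument is complete.
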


\subsection{Measurable cross-sections and tempered sets}

The followings are some important notions developed in \cite{Shapira_Weiss_2022_Geometric_and_arithmetic_aspects}.

Throughout we will let $X$ be a locally compact second countable topological space (lcsc) with Borel $\sigma$-algebra $\mathcal{B}_X$. We will let $\{a_t:t\in \mathbb{R}\}$ be a one-parameter group acting measurably on $X$, that is, the map $(t,x)\mapsto a_t x$ from $\mathbb{R}\times X\to X$ is measurable.

\begin{definition}\label{Definition: Borel cross section}
  A Borel cross-section is a Borel subset $\mathcal{S}$ of $X$ with the following properties:
\begin{itemize}
    \item[(i)]For any $x\in X$, the set of visit times
    \begin{equation*}
        \mathcal{Y}_x=\{t\in\mathbb{R}:a_t x\in\mathcal{S}\}
    \end{equation*}
    is discrete and totally unbounded; that is, for any $T>0$,
    \begin{equation*}
        \mathcal{Y}_x\cap (T,\infty)\neq\emptyset,\   \mathcal{Y}_x\cap (-\infty,-T)\neq\emptyset \
        \rm{and}\ \#  ( \mathcal{Y}_x\cap (-T,T))<\infty.
    \end{equation*}
    \item[(ii)]The return time function
    \begin{equation*}
        \tau_\mathcal{S}: \mathcal{S}\to\mathbb{R}_+,\quad  \tau_\mathcal{S}=\min (\mathcal{Y}_x\cap\mathbb{R}_+)
    \end{equation*}
    is Borel.
\end{itemize} 
\end{definition}
Let $\mathcal{S}$ be a Borel cross-section, we denote by $T_{\mathcal{S}}:\mathcal{S}\to \mathcal{S}$ the first return map, defined by
\[T_{\mathcal{S}}(x)=a_{\tau_{\mathcal{S}}(x)}x.\]
In many situations we want to consider a Borel cross-section with respect to some measure $\mu$.

\begin{definition}\label{Definition: mu cross section}
  Let $\mu$ be an $\{a_t\}$-invariant Borel measure on $X$. We say that $\mathcal{S}\in \mathcal{B}_X$ is a $\mu$-cross-section if there is an $\{a_t\}$-invairant set $X_0\in \mathcal{B}_X$ such that 
    \begin{itemize}
        \item $\mu(X\setminus X_0)=0$;
        \item $\mathcal{S}\cap X_0$ is a Borel cross-section for $(X_0,\mathcal{B}_{X_0})$.
    \end{itemize}
For a $\mu$-cross section $\mathcal{S}$, we define the first return map $\tau_{\mathcal{S}}$ by
\[\tau_{\mathcal{S}}(x):=\begin{cases}
    \tau_{\mathcal{S}\cap X_0}(X), \quad \text{if } x\in \mathcal{S}\cap X_0,\\
    \infty, \quad \quad \text{if   } x\in \mathcal{S}\setminus X_0.
\end{cases}\]
\end{definition}

In the remaining part of this section, we pay attention to some nice properties of a $\mu$-cross-section. Let us make some notations here and hereafter.
Let $\mathcal{S}$ be a $\mu$-cross-section of $X$. For $\epsilon>0$, we define
\[\mathcal{S}_{\geq \epsilon}:=\{x\in \mathcal{S}: \tau_{\mathcal{S}}(x)\geq \epsilon\}, \text{ and }\mathcal{S}_{<\epsilon}=\mathcal{S}\setminus \mathcal{S}_{\geq \epsilon}.\]
Given $E\in \mathcal{B}_X$ and $I\subset \mathbb{R}$, let 
\[E^I:=\{a_t x: x\in E, t\in I\}.\]
We record the following theorem established in \cite{Ambrose_1941_Representation_of_ergodic_flows_MR4730,Ambrose_Kakutani_1942_Structure+and+continuity+of+measurable+flows_MR5800}, which reveals the relation between the measure $\mu$ and the measure $\mu_{\mathcal{S}}$ induced by $\mu$ on a $\mu$-cross-section $\mathcal{S}$.

\begin{theorem}
   Let $\mathcal{S}\in \mathcal{B}_X$. Then for any finite $\{a_t\}$-invariant Borel measure $\mu$ on $X$ for which $\mathcal{S}$ is a $\mu$-cross-section, there exists a $T_{\mathcal{S}}$-invariant measure $\mu_{\mathcal{S}}$ on $(\mathcal{S}, \mathcal{B}_{\mathcal{S}})$ ($\mathcal{B}_{\mathcal{S}}=\{B\cap \mathcal{S}:B\in \mathcal{B}_X\}$) such that the following hold for any $E\in \mathcal{B}_{\mathcal{S}}$:
    \begin{itemize}
        \item[(i)] For any nonempty interval $I$, $\mu_{\mathcal{S}}(E)\geq \frac{\mu(E^I)}{Leb(I)}$, where $Leb$ denotes the usual Lebesgue measure on Euclidean space. In particular, for any $\epsilon>0$, if $E\subset \mathcal{S}_{\geq \epsilon}$, and $I$ is an interval of length less than $\epsilon$, then
        \[\mu_{\mathcal{S}}(E)=\frac{\mu(E^I)}{Leb(I)},\]

      \item[(ii)] In general, 
      \[\mu_{\mathcal{S}}(E)=\lim_{\epsilon\to 0}\frac{\mu(E^{(0,\epsilon)})}{\epsilon}.\]

      \item[(iii)] $\mu_{\mathcal{S}}(E)=0$ if and only if $\mu(E^{\mathbb{R}})=0$.
    \end{itemize}
We call the measure $\mu_{\mathcal{S}}$ obtained above the cross-section measure of $\mu$.
\end{theorem}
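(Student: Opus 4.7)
The plan is to build $\mu_\mathcal{S}$ first on well-separated flow boxes and extend by $\sigma$-additivity. Replacing $X$ by the $\{a_t\}$-invariant full-measure set $X_0$ and setting $\mu_\mathcal{S}(\mathcal{S}\setminus X_0):=0$, I treat $\mathcal{S}$ as an honest Borel cross-section. The key geometric fact is that for any Borel $E\subset \mathcal{S}_{\geq\varepsilon}$ and any interval $I\subset\mathbb{R}$ with $Leb(I)<\varepsilon$, the flow-box map
\[
\Phi_I:E\times I\to X,\qquad (x,t)\mapsto a_t x,
\]
is injective: if $a_t x=a_{t'}x'$ with $x,x'\in E$, then $a_{t-t'}x=x'\in\mathcal{S}$ places $|t-t'|$ in the discrete return-time set of $x$, and since $x\in\mathcal{S}_{\geq\varepsilon}$ while $|t-t'|<\varepsilon$ we are forced to $t=t'$, whence $x=x'$. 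Thus $E^I$ is Borel and I set
\[
\mu_\mathcal{S}(E):=\frac{\mu(E^I)}{Leb(I)}.
\]
Independence from $I$ follows from $\{a_t\}$-invariance of $\mu$: intervals of equal length give tubes that are flow-translates of each other, and intervals of different lengths reduce to the previous case after subdividing both into a common short length and using finite additivity of $\mu$.

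For a general Borel $E\subset\mathcal{S}$, the discreteness of $\mathcal{Y}_x$ yields $\mathcal{S}\cap X_0=\bigcup_n \mathcal{S}_{\geq 1/n}$ as an increasing Borel union, and I define $\mu_\mathcal{S}(E):=\lim_n \mu_\mathcal{S}(E\cap \mathcal{S}_{\geq 1/n})$. Countable additivity reduces via monotone convergence to disjoint countable unions sitting in a common $\mathcal{S}_{\geq\varepsilon}$, where it is immediate from the injectivity of $\Phi_I$ and countable additivity of $\mu$. With the measure in hand, the equality statement in (i) is the definition; for the inequality in (i) at a general $I$, I partition $I$ into subintervals of length $<1/n$, apply the equality case on each to $E\cap \mathcal{S}_{\geq 1/n}$, and observe that the resulting tubes cover $E^I$, with possible overcounts coming from repeated returns within $I$. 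Summing and letting $n\to\infty$ yields the stated inequality. Property (ii) is the defining formula on each $\mathcal{S}_{\geq\varepsilon}$, hence on all of $\mathcal{S}$ after passing to the monotone limit. For (iii), $\mu_\mathcal{S}(E)=0$ combined with (i) and $\{a_t\}$-invariance of $\mu$ gives $\mu(E^{(k,k+1)})=0$ for every integer $k$, so $\mu(E^{\mathbb{R}})=0$; the converse is immediate from (i).

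For $T_\mathcal{S}$-invariance, the uniqueness formula (ii) reduces matters to checking $\mu_\mathcal{S}(T_\mathcal{S}^{-1}E)=\mu_\mathcal{S}(E)$ for Borel $E\subset\mathcal{S}_{\geq\varepsilon}$. I would partition $T_\mathcal{S}^{-1}(E)$ into countably many Borel pieces $F_k:=T_\mathcal{S}^{-1}(E)\cap\{k\delta\leq\tau_\mathcal{S}<(k+1)\delta\}$ for a small $\delta<\varepsilon$; on each $F_k$ the map $x\mapsto a_{k\delta}x$ sends $F_k$ into $E^{(-\delta,0]}$, so a flow-translate of the tube $F_k^{(0,\delta)}$ is Borel-identified with a subset of $E^{(-\delta,\delta)}$. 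Summing these identifications and invoking $\{a_t\}$-invariance of $\mu$ together with the defining formula for $\mu_\mathcal{S}$ on $E$ yields $\mu_\mathcal{S}(T_\mathcal{S}^{-1}E)=\mu_\mathcal{S}(E)$ after $\delta\to 0$. The main obstacle I anticipate is the bookkeeping required in the inequality case of (i) and in the invariance step: one must carefully disentangle multiple returns to $\mathcal{S}$ within a single short flow window and avoid over- or under-counting when passing to the limit. Once well-definedness on well-separated flow boxes is secure, the remainder is essentially a monotone-class argument.
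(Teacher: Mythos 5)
The paper does not prove this theorem: it is recorded without proof as a classical result of Ambrose \cite{Ambrose_1941_Representation_of_ergodic_flows_MR4730} and Ambrose--Kakutani \cite{Ambrose_Kakutani_1942_Structure+and+continuity+of+measurable+flows_MR5800}, so there is no in-text proof to compare against. Your construction is the standard one (flow boxes on $\mathcal{S}_{\ge\varepsilon}$, then a monotone limit along $\mathcal{S}_{\ge 1/n}\uparrow \mathcal{S}\cap X_0$), and the core mechanism --- injectivity of $(x,t)\mapsto a_t x$ on $E\times I$ when $E\subset\mathcal{S}_{\ge\varepsilon}$ and $Leb(I)<\varepsilon$ via discreteness of return times, followed by well-definedness from $\{a_t\}$-invariance --- is exactly right, as is the way you recover (i) and (ii) as limiting cases and deduce (iii) from them.

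Two places where the sketch is thinner than it should be. First, Borel measurability of the flow tube $E^{I}$ is not automatic from injectivity alone; you need the Lusin--Souslin theorem (injective Borel images of Borel sets in a standard Borel space are Borel), which applies here because $X$ is lcsc and hence standard. This deserves to be said since the whole definition $\mu_{\mathcal{S}}(E)=\mu(E^I)/Leb(I)$ rests on it. Second, and more substantively, your wording for the inequality in (i) is inaccurate: the tubes $(E\cap\mathcal{S}_{\ge 1/n})^{J_j}$ do \emph{not} cover $E^I$; they cover only $(E\cap\mathcal{S}_{\ge 1/n})^I$. The correct bookkeeping is that for each $n$, subadditivity gives $\mu\bigl((E\cap\mathcal{S}_{\ge 1/n})^I\bigr)\le Leb(I)\,\mu_\mathcal{S}(E\cap\mathcal{S}_{\ge 1/n})\le Leb(I)\,\mu_\mathcal{S}(E)$, and one then lets $n\to\infty$ on the \emph{left} (monotone convergence of $\mu$, plus $\mu((E\setminus X_0)^I)=0$). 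The overcounting issue you mention lives entirely inside $(E\cap\mathcal{S}_{\ge 1/n})^I$; there is no moment at which those tubes cover the full $E^I$.

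The $T_\mathcal{S}$-invariance step is the genuinely delicate one, and the sketch as written has a hole around $F_0=T_\mathcal{S}^{-1}(E)\cap\{\tau_\mathcal{S}<\delta\}$. For $k\ge 1$ you do have $F_k\subset\mathcal{S}_{\ge\delta}$ and the flow-translation picture you describe makes sense, but $F_0$ sits inside $\mathcal{S}_{<\delta}$, and without an a priori finiteness assumption on $\mu_\mathcal{S}$ near short return times you cannot simply discard it in the limit $\delta\to 0$. You also need to verify that the translated tubes $a_{k\delta}(F_k^{(0,\delta)})=F_k^{(k\delta,(k+1)\delta)}$ are mutually essentially disjoint before summing, which again fails to be immediate for the $k=0$ piece. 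The cleanest route is to first prove the Ambrose flow-under-a-function representation (the map $\{(x,t): x\in\mathcal{S}\cap X_0,\ 0\le t<\tau_\mathcal{S}(x)\}\to X_0$, $(x,t)\mapsto a_t x$, is a Borel bijection carrying $\mu_\mathcal{S}\times Leb$ to $\mu$), from which $T_\mathcal{S}$-invariance of $\mu_\mathcal{S}$ drops out of $a_t$-invariance of $\mu$ by inspecting the induced suspension dynamics; this avoids the $\delta$-by-$\delta$ tower argument and its $k=0$ pathology. So: right construction, correct statements, but the invariance argument as sketched would not close without either a finiteness hypothesis you haven't earned or a restructuring along the suspension-flow lines.
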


\begin{definition}
Let $\mu$ be a finite $\{a_t\}$-invariant Borel measure on $X$, and $\mathcal{S}$ be a $\mu$-cross-section. Let $M\in \mathbb{N}$ and $E\subset \mathcal{S}$ be Borel measurable. We say that $E$ is $M$-tempered if for every point\footnote{It should be a mistake in Definition 4.7 of \cite{Shapira_Weiss_2022_Geometric_and_arithmetic_aspects} of stating that $E$ is tempered if for $\mu_{\mathcal{S}}$-a.e. point $x$ in $E$, the same property holds.} $x\in E$,
\begin{equation}\label{equation: definition of temperedness}
    \# \{t\in [0,1]: a_t x\in E\}\leq M.
\end{equation}
In particular, $E$ is tempered if $E$ is $M$-tempered for some $M\in \mathbb{N}$.
\end{definition}

\begin{definition}\label{Definition: Jordan measurable}
   Let $\mu$ be a Radon measure on $X$. We say that $E\in \mathcal{B}_X$ is Jordan measurable with respect to $\mu$ ($\mu$-JM) if $\mu(\partial_X(E))=0$.
\end{definition}

\begin{definition}\label{Definition: resonable section}
    Let $\mu$ be a probability measure on $X$, and $\mathcal{S}\subset X$ be a $\mu$-cross-section such that $\mathcal{S}$ is locally compact second countable and $\mu_{\mathcal{S}}$ is finite. We say that $\mathcal{S}$ is $\mu$-reasonable if in addition, $\mathcal{S}$ satisfies the following:
    \begin{itemize}
        \item[(1)] For all sufficiently small $\epsilon>0$, the sets $\mathcal{S}_{\geq \epsilon}$ is $\mu_{\mathcal{S}}$-JM.
        \item[(2)] There is a relatively open subset $\mathcal{U}\subset \mathcal{S}$ such that 
        \begin{itemize}
            \item[(a)] The map $(0,1)\times \mathcal{U}\to X$, $(t,x)\mapsto a_t x$ is open;
            \item[(b)] $\mu((cl_X(\mathcal{S})\setminus \mathcal{U})^{(0,1)})=0$.
        \end{itemize}
    \end{itemize}
\end{definition}

\begin{definition}\cite[Definition 5.1]{Shapira_Weiss_2022_Geometric_and_arithmetic_aspects}\label{Definition; genericity}
    Let $\mu$ be a probability measure on $X$ which is invariant under $\{a_t\}$, and $\mathcal{S}$ be a $\mu$-reasonable cross-section.
    \begin{itemize}
        \item[(1)]     We say that a point $x\in X$ is $(a_t,\mu)$-generic if 
    \[\frac{1}{T}\int_0^T \delta_{a_t x} dt\to \mu \text{ in the weak-* topology as }T\to \infty.\]
    Equivalently (see \cite{Billingsley_1968_Convergence+of+probability+measures_MR233396}), $x$ is $(a_t,\mu)$-generic if for any $\mu$-JM set $E$,
    \[\frac{1}{T}\int_0^T \chi_E(a_t x)dt\to \mu(E) \text{ as } T\to \infty.\]

    \item[(2)] For a Borel subset $E\subset \mathcal{S}$ which is $\mu_{\mathcal{S}}$-JM and of positive $\mu_{\mathcal{S}}$-measure, $x\in X$ is $(a_t,\mu_{\mathcal{S}}|_{E})$-generic if the sequence of visits of the orbit $\{a_t x:t>0\}$ to $E$ equidistributes with respect to $\frac{1}{\mu_{\mathcal{S}}(E)}\mu_{\mathcal{S}}|_{E}.$ That is, for any $\mu_{\mathcal{S}}$-JM set $E'\subset E$, we have 
    \[\lim_{n\to \infty}\frac{N(x,T,E')}{N(x,T,E)}=\frac{1}{\mu_{\mathcal{S}}(E)}\mu_{\mathcal{S}}|_{E}(E').\]
    \end{itemize}

\end{definition}

For any measurable $E\subset X$ and $T>0$, consider the number
\[N(x,T,E)=\#\{t\in [0,T]:a_t x\in E\}.\]
The following proposition is the key in the course of establishing Theorem \ref{Theorem: weighted Levy-Khintchine constant}.

\begin{proposition}\label{Proposition: limit behavior of visiting times to tempered set}\cite[Proposition 5.9, Theorem 5.11]{Shapira_Weiss_2022_Geometric_and_arithmetic_aspects}\label{Proposition: transference of genericity to cross section}
    Let $\mathcal{S}$ be a $\mu$-reasonable cross-section and let $E\subset \mathcal{S}$ be a tempered subset which is $\mu_{\mathcal{S}}$-JM. If $x\in X$ is $(a_t,\mu)$-generic, then 
    \[\lim_{T\to\infty} \frac{1}{T} N(x,T,E)=\mu_{\mathcal{S}}(E),\]
and $x$ is $(a_t,\mu_{\mathcal{S}}|E)$-generic.
\end{proposition}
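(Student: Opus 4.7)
The plan is to translate the discrete visit count $N(x,T,E)$ into a continuous-time ergodic average using an $\epsilon$-flow-box thickening of $E$, and then feed the latter into the hypothesis that $x$ is $(a_t,\mu)$-generic. The underlying identity is that $a_t x\in E^{(0,\epsilon)}$ precisely when $t$ lies in one of the intervals $(t_i,t_i+\epsilon)$, where $t_1<t_2<\cdots$ enumerate the visit times of $\{a_s x\}_{s\ge 0}$ to $E$; when consecutive visits are separated by at least $\epsilon$ these intervals are pairwise disjoint, so the time integral of $\chi_{E^{(0,\epsilon)}}$ equals $\epsilon\cdot N(x,T,E)+O(\epsilon)$.

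For small $\epsilon>0$ I would first work with $E_\epsilon:=E\cap\mathcal{S}_{\ge\epsilon}$, on which return times are bounded below by $\epsilon$, so the flow-box identity $\int_0^T\chi_{E_\epsilon^{(0,\epsilon)}}(a_t x)\,dt=\epsilon\cdot N(x,T,E_\epsilon)+O(\epsilon)$ is exact. By the reasonableness of $\mathcal{S}$ (Definition \ref{Definition: resonable section}), $\mathcal{S}_{\ge\epsilon}$ is $\mu_{\mathcal{S}}$-JM for a cofinal sequence of $\epsilon$'s, and hence so is $E_\epsilon$; the openness of the flow map then promotes this to $\mu$-JM-ness of the thickening $E_\epsilon^{(0,\epsilon)}$, while the Ambrose-Kakutani formula yields $\mu(E_\epsilon^{(0,\epsilon)})=\epsilon\,\mu_{\mathcal{S}}(E_\epsilon)$. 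Applying $(a_t,\mu)$-genericity of $x$ to $\chi_{E_\epsilon^{(0,\epsilon)}}$ therefore gives
\begin{equation*}
\lim_{T\to\infty}\frac{N(x,T,E_\epsilon)}{T}=\mu_{\mathcal{S}}(E_\epsilon).
\end{equation*}

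The main obstacle is the complementary piece $E\cap \mathcal{S}_{<\epsilon}$, for which the flow-box identity fails because the thickening map is no longer injective. Here temperedness is essential: since $E$ is $M$-tempered, for any $\epsilon'\le 1$ the map $(y,s)\mapsto a_s y$ from $(E\cap\mathcal{S}_{<\epsilon})\times(0,\epsilon')$ into $X$ is at most $M$-to-one. A greedy pigeonhole argument over the visit times then yields a one-sided inequality of the form
\begin{equation*}
\epsilon'\,N(x,T,E\cap \mathcal{S}_{<\epsilon})\;\le\;M\int_0^T\chi_{(E\cap \mathcal{S}_{<\epsilon})^{(0,\epsilon')}}(a_t x)\,dt\;+\;O(\epsilon'),
\end{equation*}
and genericity applied to a suitable $\mu$-JM enlargement of $(E\cap \mathcal{S}_{<\epsilon})^{(0,\epsilon')}$ produces $\limsup_T N(x,T,E\cap \mathcal{S}_{<\epsilon})/T\le M\,\mu_{\mathcal{S}}(E\cap \mathcal{S}_{<\epsilon})$. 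Because $\mathcal{S}_{<\epsilon}\downarrow\emptyset$ and $\mu_{\mathcal{S}}$ is finite, letting $\epsilon\to 0$ and combining with the preceding step yields $N(x,T,E)/T\to\mu_{\mathcal{S}}(E)$.

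For the second conclusion I would apply the first one to any $\mu_{\mathcal{S}}$-JM subset $E'\subset E$ (which automatically inherits $M$-temperedness): both $N(x,T,E')/T$ and $N(x,T,E)/T$ converge to $\mu_{\mathcal{S}}(E')$ and $\mu_{\mathcal{S}}(E)$ respectively, and taking their ratio gives $N(x,T,E')/N(x,T,E)\to\mu_{\mathcal{S}}(E')/\mu_{\mathcal{S}}(E)$, which is precisely Definition \ref{Definition; genericity}(2). The hardest part of the argument is the pigeonhole estimate above together with the accompanying verification that the thickenings of non-closed sets like $E\cap\mathcal{S}_{<\epsilon}$ are $\mu$-JM for sufficiently many $\epsilon'$; these technicalities rely on the $\mu_{\mathcal{S}}$-JM-ness of $E$ and the openness of the flow map on the set $\mathcal{U}$ appearing in the definition of reasonableness.
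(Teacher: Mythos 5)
This proposition is imported verbatim from Shapira--Weiss (their Proposition~5.9 and Theorem~5.11) and the paper offers no proof of its own, so there is no internal argument to compare against. Your blind reconstruction nevertheless recovers exactly the ingredients of the cited argument: the decomposition $E=(E\cap\mathcal{S}_{\geq\epsilon})\cup(E\cap\mathcal{S}_{<\epsilon})$, the exact flow-box count on the long-return piece via the Ambrose--Kakutani formula together with $(a_t,\mu)$-genericity, the $M$-temperedness bound on the covering multiplicity of thickened intervals over the short-return piece to get a one-sided $\limsup$ estimate that vanishes as $\epsilon\to 0$, and finally applying the first conclusion to $\mu_{\mathcal{S}}$-JM subsets $E'\subset E$ and taking ratios to get the second conclusion. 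The structure and the key lemmas are the right ones.

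Two precision points worth tightening. First, the Ambrose--Kakutani identity $\mu(E_\epsilon^I)=Leb(I)\,\mu_{\mathcal{S}}(E_\epsilon)$ on $E_\epsilon\subset\mathcal{S}_{\geq\epsilon}$ is only asserted for intervals with $Leb(I)<\epsilon$, so your thickening should be taken over $(0,\epsilon')$ with $\epsilon'<\epsilon$ rather than over $(0,\epsilon)$ itself; this is cosmetic but matters for the identity to be ``exact.'' Second, promoting $\mu_{\mathcal{S}}$-JM-ness of $E_\epsilon$ (resp.\ of $E\cap\mathcal{S}_{<\epsilon}$) to $\mu$-JM-ness of the thickening genuinely requires both halves of condition (2) in Definition~\ref{Definition: resonable section}: the openness of $(t,x)\mapsto a_t x$ is only guaranteed on $(0,1)\times\mathcal{U}$, and $E_\epsilon$ need not be contained in $\mathcal{U}$, so one must also invoke $\mu((cl_X(\mathcal{S})\setminus\mathcal{U})^{(0,1)})=0$ to discard the complementary part. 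You flag these as technicalities, which is fair, but they are exactly where a careless version of the proof would have a gap; carried out carefully your outline matches the proof being cited.
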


\section{Pointwise equidistribution for almost smooth functions with an error rate}\label{Section: pointwise ergodic theorem}

In this section we will prove Theorem \ref{Theorem: Pointwise ergodic theorem with rate for nonsmooth functions}, which is a strengthening of \cite[Theorem 1.1]{Kleinbock_Shi_Barak_2017_Pointwise_equidistribution_with_an_error_rate_and_with_respect_to_unbounded_functions_MR3606456} and might be of independent interest.

Throughout this section, we let $N=m+n$, $G=SL_{N}(\mathbb{R})$, $\Gamma=SL_N(\mathbb{Z})$, and $\mathfrak{X}_{N}=G/\Gamma$. Denote by $\mu_N$ the unique $G$-invariant probability measure on $\mathfrak{X}_N$. Let $\boldsymbol{a}\in \mathbb{R}^m,\boldsymbol{b}\in \mathbb{R}^n$ be weight vectors. Define the one-parameter diagonal subgroup $\{a_t:t\in \mathbb{R}\}$ of $G$ as in (\ref{equation: definition of a t}).
Recall that $\mathcal{U}$ given by (\ref{align: definition of mathcal U}) denotes a subgroup of upper triangular matrices.

\begin{definition}\label{Definition: almost smooth functions}
    A function $f$ on $\mathfrak{X}_N$ is an almost smooth function if there exist $\{f^-_t\in C_c^{\infty}(\mathfrak{X}_N)\}_{t\in \mathbb{R}_+}$ and $\{f^+_t\in C_c^{\infty}(\mathfrak{X}_N)\}_{t\in \mathbb{R}_+}$  satisfying the following: 
    \begin{itemize}
    \item[(1)] $\sup_t \norm{f^-_t}_{\infty},\sup_t\norm{f^+_t}_{\infty}$ is finite, where $\norm{\cdot}_{\infty}$ is the uniform norm;
    
        \item[(2)] $f^-_t(x)\leq f(x)\leq f^+_t(x)$, $\forall t\in \mathbb{R}_+$ and $\forall x\in \mathfrak{X}_N$;

        \item[(3)] For all sufficiently large $t\in \mathbb{R}_+$, $\mu_N(f^+_t-f^-_t)=O (t^{-1})$;

        \item[(4)]  There exist $M,l\in \mathbb{N}$ such that for all sufficiently large
$t\in \mathbb{R}_+$, 
\[\mathcal{N}_l(f^+_t),\mathcal{N}_l(f^-_t)=O(t^M).\]

    \end{itemize}
\end{definition}

The following are two examples of almost smooth functions but not smooth.

\begin{example}\label{Example: almost smooth function 1}
For $\Lambda\in \mathfrak{X}_N$, let 
    \[\Delta(\Lambda):=\sup_{\boldsymbol{v}\in \Lambda\setminus \{0\}} \log (\frac{1}{\norm{v}}),\]
where $\norm{\cdot}$ is the usual Euclidean norm on $\mathbb{R}^N$.
By Mahler's compactness criterion, for $z\in \mathbb{R}_+$ the set 
\[ C_z:=\{\Lambda\in \mathfrak{X}_N: \Delta(\Lambda)\leq z\} \]
is compact.
We claim that $f=\chi_{C_z}$ is an almost smooth function.

Indeed, for any $r>0$, let $\theta_r\in C_c^{\infty}(G)$ be the bump function as in Lemma \ref{Lemma: bump function}. Define $\phi_r^-=\theta_r* \chi_{C_{z-r}}$, and $\phi_r^+=\theta_r*\chi_{C_{z+r}}$. For $r>0$ sufficiently small (compared to $z$), we have
\[\chi_{C_{z-2r}}\leq\phi_r^-\leq \chi_{C_z}\leq \phi_r^+\leq \chi_{C_{z+2r}}.\]
Note that
\begin{align*}
    C_{z+2r}\setminus C_{z-2r}\subset \left\{ \Lambda\in \mathfrak{X}_N: \exists \boldsymbol{v}\in \Lambda, e^{-(z+2r)}\leq \norm{\boldsymbol{v}}\leq e^{-(z-2r)}\right\}.
\end{align*}
Let 
\[E_r=\left\{\boldsymbol{v}\in \mathbb{R}^N:  e^{-(z+2r)}\leq \norm{\boldsymbol{v}}\leq e^{-(z-2r)}\right\},\]
and 
\[\hat{\chi}_{E_r}(\Lambda)=\sum_{\boldsymbol{v}\in \Lambda}\chi_{E_r}(\boldsymbol{v}),\quad \forall \Lambda\in \mathfrak{X}_N.\]
Then $\hat{\chi}_{E_r}\in L^1(\mathfrak{X}_N,\mu_N)$ and $\hat{\chi}_{E_r}\geq \chi_{C_{z+2r}\setminus C_{z-2r}}$. Therefore, by Siegal's integration formula, when $r>0$ is sufficiently small, we have
\begin{align*}\label{align: f+ - f-}
    \mu_N( \chi_{C_{z+2r}\setminus C_{z-2r}})&\leq \int \hat{\chi}_{E_r} d\mu_N \ll_{N,z} r.
\end{align*}
Now for any $t>1$, we define 
\begin{align*}
    f_t^-=\phi_{t^{-1}}^-, \text{ and } f_t^+=\phi_{t^{-1}}^+.
\end{align*}
Clearly, $\{f_t^-\}$ and $\{f_t^+\}$ are two sequences of uniformly bounded compactly supported smooth functions, and by the above discussion they satisfy (1)(2)(3) of Definition \ref{Definition: almost smooth functions}. By Lemma \ref{Lemma: bump function}, it is easy to verify that there exist $M,l\in \mathbb{N}$ such that $\mathcal{N}_l(f_t^{\pm})\ll t^M$ for all sufficiently large $t$. The claim follows.
\end{example}
\begin{example}\label{Example: almost smooth funcions 2}
 Let $C$ be an open subset of $\mathfrak{X}_N$ such that $C$ is contained in a compact set and $\partial C$ has measure zero, where $\partial C$ is the boundary of $C$ in $\mathfrak{X}_N$. Then $f=\chi_C$ is almost smooth. Indeed, for any $r>0$ we define the $r$-neighborhood of $\partial C$ by
\begin{align*}
    \partial^r C:=\left\{x\in \mathfrak{X}_N: \dist(x, \partial C)\leq r \right\}.
\end{align*}
Let $\theta_r$ be the bump function as in Lemma \ref{Lemma: bump function}. For $r>0$ sufficiently small (depending on $C$), define 
\begin{align*}
    \phi_r^-=\theta_r*\chi_{C\setminus \partial^{2r} C}, \text{ and } \phi_r^+=\theta_r* \chi_{C \cup \partial^{2r} C}.
\end{align*}
Then $\chi_{C\setminus \partial^{4r}C}\leq \phi_r^- \leq \chi_C \leq \phi_r^+ \leq \chi_{C\cup \partial^{4r}C}$. As $C$ is contained in a compact set, 
\[\mu_N(\chi_{C\cup \partial^{4r}C}-\chi_{C\setminus \partial^{4r}C})\ll_C r.\]
For $t>0$ sufficiently large (depending on $C$), define 
\[f_t^-=\phi_{t^{-1}}^-, \text{ and } f_t^+=\phi_{t^{-1}}^+.\]
It is left to the reader to verify that the sequences $\{f_t^-\}$ and $\{f_t^+\}$ satisfy all assumptions in Definition \ref{Definition: almost smooth functions}.
\end{example}
The following is the main result of this section:

\begin{theorem}\label{Theorem: a general method for proving pointwise ergodic with rate}
    Let $(Y,\nu)$ be a probability space. Let $\{F_t\}_{t\in \mathbb{R}_+}$ be a collection of uniformly bounded measurable functions from $Y\times \mathbb{R}_+$ to $\mathbb{R}$. Suppose that there exist $\delta>0$, $M\geq 0$ such that for any $w\geq t>0$,
\[\left|\int_{Y}F_t(x,t)F_w(x,w)d\nu(x)\right|\ll_{\{F_t\},M} w^M e^{-\delta \min(t,w-t)}.\]
Then given $\epsilon>0$, we have 
\begin{align*}
    \frac{1}{T}\int_0^T F_t(x,t)dt=\begin{cases}
        o(T^{-1/2}\log^{2+\epsilon} T), \quad \text{if } M>0\\
        o(T^{-1/2}\log^{3/2+\epsilon} T), \quad \text{if } M=0
    \end{cases}, \text{ for $\nu$-a.e. $x$.}
\end{align*}
\end{theorem}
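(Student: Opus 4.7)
Our strategy is the variance-plus-maximal-inequality framework underlying \cite[Theorem~3.1]{Kleinbock_Shi_Barak_2017_Pointwise_equidistribution_with_an_error_rate_and_with_respect_to_unbounded_functions_MR3606456}, which corresponds to the case $M=0$; the new ingredient is to absorb the polynomial factor $w^M$ in the correlation hypothesis. Throughout, write $S_T(x):=\int_0^T F_t(x,t)\,dt$ and $C:=\sup_{t,x,s}|F_t(x,s)|<\infty$.

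\emph{Step 1: $L^2$ bound on $S_T$.} Expanding the square and using symmetry between $t$ and $w$, one has
\[\norm{S_T}_{L^2(\nu)}^2 \ \ll\ \int_0^T\int_0^w \left|\int_Y F_t(x,t)F_w(x,w)\,d\nu(x)\right| dt\,dw.\]
For each fixed $w$ we split the inner $t$-integral at the threshold $\min(t,w-t)=(M+1)\delta^{-1}\log w$: below it we use the trivial bound $C^2$; above it the assumed $w^M e^{-\delta\min(t,w-t)}$ is swallowed by the exponential. Routine estimation yields
\[\norm{S_T}_{L^2(\nu)}^2\ \ll\ \begin{cases} T, & M=0,\\[2pt] T\log T, & M>0.\end{cases}\]

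\emph{Step 2: dyadic Borel--Cantelli.} Set $T_j=2^j$, and let $c=0$ if $M=0$ and $c=1$ if $M>0$, so that $\norm{S_{T_j}}_{L^2}^2 \ll T_j(\log T_j)^c$. Chebyshev gives
\[\nu\bigl(|S_{T_j}|>T_j^{1/2}(\log T_j)^\alpha\bigr)\ \ll\ (\log T_j)^{c-2\alpha},\]
and choosing $\alpha=(c+1+\epsilon')/2$ for a fixed $0<\epsilon'<\epsilon$ makes the right-hand side summable in $j$. Borel--Cantelli then yields, for $\nu$-a.e.\ $x$,
\[S_{T_j}(x)=O\bigl(T_j^{1/2}(\log T_j)^{(c+1+\epsilon')/2}\bigr).\]

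\emph{Step 3: interpolation across $[T_j,T_{j+1}]$.} It remains to control $D_j(x):=\sup_{T\in[T_j,T_{j+1}]}|S_T(x)-S_{T_j}(x)|$. We subdivide $[T_j,T_{j+1}]$ dyadically: at depth $k\leq K_j\asymp \log T_j$ into $2^k$ pieces of length $T_j2^{-k}$, and run a Rademacher--Menshov-type maximal inequality on the telescoped increments at each depth. The increments are not orthogonal, but the correlation hypothesis supplies pairwise $L^2$ control strong enough to mimic orthogonality, contributing a single extra factor of $\log T_j$ across all depths; the residual at depth $K_j$ (intervals of length $\leq 1$) is absorbed by the trivial bound $C$. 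Another pass of Chebyshev and Borel--Cantelli then gives $D_j(x)\ll T_j^{1/2}(\log T_j)^{(c+3)/2+\epsilon'/2}$ almost surely. Dividing by $T\asymp T_j$ and taking $\epsilon'<\epsilon$ produces $\tfrac{1}{T}S_T(x)=o(T^{-1/2}(\log T)^{3/2+\epsilon})$ if $M=0$ and $o(T^{-1/2}(\log T)^{2+\epsilon})$ if $M>0$, as claimed.

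\emph{Main obstacle.} The delicate point is Step 3. The classical Menshov--Rademacher maximal lemma requires orthogonality of summands, which we do not have; what saves the argument is that the decay in $\min(t,w-t)$ in the hypothesis is strong enough to control cross-terms across each dyadic scale and replace orthogonality at the cost of one logarithmic factor. Getting the bookkeeping right between the two logarithmic contributions---the variance in Step 1 (which costs $\log^{(c+1)/2}T$) and the maximal inequality in Step 3 (which costs another $\log T$)---is what separates the final exponents $3/2$ and $2$.
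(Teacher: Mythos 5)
Your proposal is correct and reaches the stated exponents, but it organizes the dyadic machinery differently than the paper does, and one of the claimed difficulties is actually a non-issue.

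The paper works directly with the full hierarchy $L_s$ of dyadic subintervals of $[0,2^s]$: Lemma \ref{Lemmal: area of bad region} bounds the variance $\int_Y(\int_{I^i_j}F_t\,dt)^2\,d\nu$ by $\ll s\,2^i$ once $i$ exceeds $\approx(2+\epsilon_1)\log_2 s$ (by estimating the area of the ``bad'' region $\mathcal R$ where $w^M$ overwhelms the exponential decay), Lemma \ref{Lemma: upper bound for integral over Ls} sums over all of $L_s$ to get $\ll s^{2+\epsilon_1}2^s$, Lemma \ref{Lemma: a small set over which integral is large} defines a single exceptional set $Y_s$ and uses the covering of $[0,k]$ by at most $s$ dyadic intervals together with Cauchy--Schwarz to control $\sup_{k<2^s}|\int_0^k F_t\,dt|$ in one pass, and a single Borel--Cantelli closes the argument. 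Your version separates ``endpoint'' $T_j=2^j$ (variance $\ll T_j\log T_j$ for $M>0$ from Step~1, Chebyshev plus Borel--Cantelli) from ``oscillation'' over $[T_j,T_{j+1}]$ (a second Chebyshev/Borel--Cantelli for $D_j$ in Step~3). The bookkeeping in both cases lands at $T^{-1/2}(\log T)^{2+\epsilon}$ for $M>0$; the two extra half-powers of $\log T$ over the $M=0$ case enter at the same structural places (one from the near-diagonal ``bad'' strip of width $\asymp\log T$ in the per-interval variance, one from Cauchy--Schwarz over the $\asymp\log T$ levels of the dyadic path). Where I would push back is your ``Main obstacle'': you do not actually need to mimic orthogonality. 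The maximal bound $\sup_T(S_T-S_{T_j})^2\leq K_j\sum_I(\int_I F_t\,dt)^2$ is a deterministic (pointwise) consequence of Cauchy--Schwarz along the dyadic path, and after integrating over $Y$ one only ever needs the \emph{self}-correlation of each dyadic interval $I$ (namely $\int_{I\times I}\int_Y F_tF_w$), never cross-terms between distinct intervals at a fixed scale. This is exactly how Lemmas \ref{Lemmal: area of bad region}--\ref{Lemma: a small set over which integral is large} operate, and it is also what makes your Step~3 go through; phrasing it as a Menshov--Rademacher issue about non-orthogonal summands is a red herring. Finally, a wording nit: the $L^2$ loss in the maximal inequality is $(\log T_j)^2$ (one $\log$ from the path-length Cauchy--Schwarz, one from summing per-level variances), which becomes the ``one extra $\log T$'' in the almost-sure bound after the square root; your final formula $D_j\ll T_j^{1/2}(\log T_j)^{(c+3)/2+\epsilon'/2}$ is consistent with this, but the sentence ``contributing a single extra factor of $\log T_j$ across all depths'' reads as if the $L^2$ loss were only $\log T_j$.
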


\begin{remark}
    Let $F$ be a bounded measurable function from $Y\times \mathbb{R}_+$ to $\mathbb{R}$, and let $F_t=F$ for all $t\in \mathbb{R}_+$. If $M=0$, then Theorem \ref{Theorem: a general method for proving pointwise ergodic with rate} recovers \cite[Theorem 3.1]{Kleinbock_Shi_Barak_2017_Pointwise_equidistribution_with_an_error_rate_and_with_respect_to_unbounded_functions_MR3606456}.
\end{remark}
The proof of Theorem \ref{Theorem: a general method for proving pointwise ergodic with rate} essentially follows from that of \cite[Theorem 3.1]{Kleinbock_Shi_Barak_2017_Pointwise_equidistribution_with_an_error_rate_and_with_respect_to_unbounded_functions_MR3606456}, with more careful estimates. Given $s\in \mathbb{N}$, let $L_s$ be the collection of all intervals of the form $I_j^i=[2^i j, 2^i (j+1)]$, where $i,j\geq 0$ and $2^i(j+1)<2^s$. Denote
\[\mathcal{R}:=\left\{(w,t):w\geq t >0, \text{ and } w^M\geq e^{\delta/2 \min\{t,w-t\}}\right\}.\]
We note that $\mathcal{R}\subset \mathbb{R}^2$ is the region where the integral $\left|\int_{Y}F_t(x,t)F_w(x,w)d\nu(x)\right|$ is potentially large. However, the area of $\mathcal{R}$ is relatively small:

\begin{lemma}\label{Lemmal: area of bad region}
   Let $\{F_t\}_{t\in \mathbb{R}},\delta,M$ be as in the Theorem \ref{Theorem: a general method for proving pointwise ergodic with rate}. Then for any $\epsilon_1>0$, there exists $s(\delta,M,\epsilon_1)\in \mathbb{N}$ such that for any integer $s\geq s(\delta,M,\epsilon_1)$, and $I_j^i\in L_s$, if
   \begin{equation}\label{equation: when i is large}
       \frac{(2+\epsilon_1)\log s}{\log 2}<i<s,
   \end{equation}
   then $Leb(B^i_j\cap \mathcal{R})\ll_{\delta,M} s2^i$, where $B_j^i=I_j^i\times I_j^i$. As a consequence, for $s\geq s(\delta,M,\epsilon_1)$, $I^i_j\in L_s$ with $i$ satisfying (\ref{equation: when i is large}), we have
    \begin{align*}
        \int_Y \left(\int_{I^i_j} F_t(x,t)dt \right)^2 d\nu(x) \ll_{\{F_t\},\delta,M} s 2^i.
    \end{align*}
\end{lemma}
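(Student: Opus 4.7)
The plan is to first bound the Lebesgue measure of the ``bad'' region $B_j^i \cap \mathcal{R}$ in Step~1, and then derive the $L^2$ estimate in Step~2 by splitting the Fubini expansion of the square according to whether the pair $(w,t)$ lies inside or outside $\mathcal{R}$.

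\textbf{Step 1 (measure estimate).} Taking logarithms, the condition $(w,t)\in\mathcal{R}$ is equivalent to $\min(t,w-t) \leq \tfrac{2M}{\delta}\log w$. For $(w,t) \in B_j^i$ we have $w \leq 2^i(j+1) < 2^s$, so $\log w < s\log 2$, and the condition simplifies to $\min(t,w-t) \leq Cs$ with $C := \tfrac{2M\log 2}{\delta}$. This forces either (a) $w - t \leq Cs$, a diagonal strip in $B_j^i$ of Lebesgue measure at most $Cs\cdot 2^i$, or (b) $t \leq Cs$. Case (b) is empty when $j\geq 1$, provided $s\geq s(\delta,M,\epsilon_1)$ is large enough so that $s^{2+\epsilon_1} > Cs$: indeed, then $t \geq 2^i j \geq 2^i > s^{2+\epsilon_1} > Cs$ by the hypothesis $i > (2+\epsilon_1)\log_2 s$. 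For $j = 0$, case (b) contributes at most $Cs\cdot 2^i$. Summing these pieces yields $Leb(B_j^i\cap\mathcal{R}) \leq 2Cs\cdot 2^i \ll_{\delta,M} s\cdot 2^i$.

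\textbf{Step 2 ($L^2$ estimate).} By Fubini and symmetry in $t$ and $w$,
\[
\int_Y \Bigl(\int_{I_j^i} F_t(x,t)\,dt\Bigr)^2 d\nu(x) = 2\int_{B_j^i\cap\{w\geq t\}} \Bigl(\int_Y F_t(x,t)F_w(x,w)\,d\nu(x)\Bigr) dt\,dw,
\]
and we split the outer integral into contributions over $B_j^i\cap\mathcal{R}$ and $B_j^i\setminus\mathcal{R}$. On $B_j^i\cap\mathcal{R}$, the trivial bound $\bigl|\int_Y F_tF_w\,d\nu\bigr|\leq (\sup_s\norm{F_s}_\infty)^2$ combined with Step~1 yields a contribution $\ll_{\{F_t\},\delta,M} s\cdot 2^i$. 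On $B_j^i\setminus\mathcal{R}$, the standing hypothesis of the theorem gives $\bigl|\int_Y F_tF_w\,d\nu\bigr|\ll w^M e^{-\delta\min(t,w-t)}$, and the defining inequality $w^M < e^{(\delta/2)\min(t,w-t)}$ of $\mathcal{R}^c$ improves this to $\ll e^{-(\delta/2)\min(t,w-t)}$. Using $e^{-(\delta/2)\min(a,b)}\leq e^{-(\delta/2)a}+e^{-(\delta/2)b}$ and integrating each term in turn (fix $w$ for the first, fix $t$ for the second), a direct calculation gives
\[
\int_{B_j^i\cap\{w\geq t\}} e^{-(\delta/2)\min(t,w-t)}\,dt\,dw \ll_\delta 2^i \leq s\cdot 2^i.
\]
Adding the two pieces completes the proof.

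The argument is essentially a careful bookkeeping exercise and I do not anticipate a serious obstacle: the only mildly delicate point is that the polynomial weight $w^M$ in the correlation hypothesis forces one to excise the region $\mathcal{R}$ where this weight overwhelms the exponential decay, and then to verify that $B_j^i\cap\mathcal{R}$ is itself geometrically small. The lower bound $i > (2+\epsilon_1)\log_2 s$ is invoked precisely to kill the ``small $t$'' contribution uniformly for all $j\geq 1$; without it one would be forced to accept a bound of order $2^{2i}$ in that sub-case.
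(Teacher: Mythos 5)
Your proof is correct and follows essentially the same strategy as the paper's: bound $\mathrm{Leb}(B_j^i\cap\mathcal{R})$ geometrically, then split the Fubini expansion of the square into the $\mathcal{R}$ and $\mathcal{R}^c$ parts. Your Step 1 is noticeably cleaner than the paper's: by uniformly bounding $\log w \leq s\log 2$ on the box you replace the curved boundary $\min(t,w-t) = (2M/\delta)\log w$ by two straight strips of width $Cs$, sidestepping the paper's more elaborate case analysis of the curves $t = w - (2M\log w)/\delta$, $t=(2M\log w)/\delta$ and their fixed point $w_0$.
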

\begin{proof}
  The region $\mathcal{R}$ could be divided into two parts: $\mathcal{R}=\mathcal{R}_1\cup \mathcal{R}_2$, where 
\begin{align*}
 \mathcal{R}_1:=\{(w,t):2t>w\geq t>0, \text{ and } w^M\geq e^{\delta(w-t)/2}\};\\
    \mathcal{R}_2:=\{(w,t):w\geq 2t >0, \text{ and } w^M\geq e^{\delta t/2 }\}.
\end{align*}

First we analyze $\mathcal{R}_1$. In the coordinate plane with vertical axis $t$ and horizontal axis $w$, $\mathcal{R}_1$ is the region in the first quadrant lying below $t=w$, above $t=w/2$ and $t=w-(2M \log w) /\delta$. Let $s(\delta,M,\epsilon_1)$ be a large enough integer such that any $s\geq s(\delta,M,\epsilon_1)$ satisfies the following inequalities:
\begin{equation}\label{equation: inequality about s(delta,l)}
    s>10\cdot \frac{(2+\epsilon_1)\log s}{\log 2}>20\cdot \frac{\log(4M/\delta)+\log\log 2 +\log s}{\log 2} 
\end{equation}
The first half of inequality (\ref{equation: inequality about s(delta,l)}) means that the curve $t=w-(2M\log w)/\delta$ lies above $t=w/2$ as long as $w=2^s$ for $s\geq s(\delta,M,\epsilon_1)$, While the meaning of second half will be clear later in the proof of Lemma \ref{Lemma: upper bound for integral over Ls}. We remark that the choice of $s(\delta,M,\epsilon_1)$ ensures that for any $s\geq s(\delta,M,\epsilon_1)$, there exists at least one solution for $i$ in (\ref{equation: when i is large}).

For a box $B^i_j=I^i_j\times I^i_j$, it is desired that most part of $B_j^i$ lies outside $\mathcal{R}_1$. The worst cases include that $B_j^i$ lies completely above $t=w-(2M\log w)/\delta$, and that the portion of
\begin{equation}\label{equation: bad case 2}
    B_j^i\cap \{(w,t):w/2>\max\{w-(2M\log w)/\delta,0\}\}
\end{equation}
is too large in $B_j^i$. In the following we will show that excluding these two cases, $Leb(B_j^i\cap \mathcal{R}_1)$ is small. We will fix an integer $s\geq s(\delta,M,\epsilon_1)$ for the rest of the proof.

For the first case, it is clear that $B_j^i$ does \textbf{not} completely lie above $t=w-(2M\log w)/\delta$ if and only if the interior of lower side of $B_j^i$ intersects  $t=w-(2M \log w )/\delta$, which happens if and only if for any $w>0$ satisfying $2^i j=w-(2M\log w)/\delta$,
we have $2^i(j+1)>w$. That is, $2^i>(2M\log w)/\delta$. As $0< w\leq 2^s$, if 
\begin{equation}\label{equation: value of i in the first case}
    i>\frac{\log(2M/\delta)+\log\log 2+\log s}{\log 2},
\end{equation}
then $2^i>(2M\log w)/\delta$. As $s$ satisfies (\ref{equation: inequality about s(delta,l)}), when $i$ satisfies inequality (\ref{equation: when i is large}), (\ref{equation: value of i in the first case}) is met and the first case is excluded.

For the second case, let $2^{x_0}=w_0>0$ be such that $w_0=4M\log w_0/\delta$. By inequality (\ref{equation: inequality about s(delta,l)}), we obtain $x_0<s$ if $s\geq s(\delta,M,\epsilon_1)$. Then an elementary computation shows
\[x_0<\frac{\log(4M/\delta)+\log\log 2+\log s}{\log 2}.\] 
Thus, when $i$ satisfies (\ref{equation: when i is large}), by (\ref{equation: inequality about s(delta,l)}) we obtain $w_0<2^{i/2}$. A short calculation shows the Lebesgue measure of (\ref{equation: bad case 2}) is relatively small compared to $B_j^i$ (see (\ref{align: first case when j=0})). In particular, we note that (\ref{equation: bad case 2}) is empty when $i$ satisfies (\ref{equation: when i is large}) and $j\geq 1$.

To conclude, as long as $i$ is a solution of inequality (\ref{equation: when i is large}), the above mentioned two cases are both excluded. Therefore, when $i$ is a solution for (\ref{equation: when i is large}), and $0<j\leq 2^{s-i}-1$, we have
\begin{align*}
    Leb(B_j^i\cap \mathcal{R}_1)&\leq \int_{I_j^i} w-(w-\frac{2M\log w}{\delta}) dw\\
    &\leq \frac{2M}{\delta}\cdot 2^i\log(2^i(j+1))\\
    &\leq \frac{2M\log 2}{\delta}\cdot s2^i,
\end{align*}
When $i$ is as in (\ref{equation: when i is large}) and $j=0$, we have
\begin{align}\label{align: first case when j=0}
     Leb(B_0^i\cap \mathcal{R}_1)&\leq \int_0^{2^{i/2}} \frac{w}{2} dw+ \int_1^{2^i}\frac{2M\log w}{\delta} dw \nonumber\\
     &\leq \log 2\cdot(\frac{1}{4}+\frac{2M}{\delta})\cdot s 2^i.
\end{align}

Now we analyze $\mathcal{R}_2$, which is the region in the first quadrant bounded below by $w$-axis, above by $t=(2M\log w)/\delta$ and $t=w/2$. As before, we let $w_0>0$ be such $w_0=(4M\log w_0)/\delta$. By the above estimation, when $i$ satisfies (\ref{equation: when i is large}). we obtain $w_0<2^{i/2}$.

Therefore, for any $i$ satisfying (\ref{equation: when i is large}), if $0<j\leq 2^{s-i}-1$, then 
\begin{align*}
      Leb(B_j^i\cap \mathcal{R}_2)&\leq \int_{I_i^j} \frac{2M \log w}{\delta} dw\\
      &\leq \frac{2M}{\delta}\cdot \log(2^i(j+1))\int_{I_j^i}dw\\
      &\leq \frac{2M\log 2}{\delta}\cdot s 2^i,
\end{align*}
When $i$ is as in (\ref{equation: when i is large}) and $j=0$, we have
\begin{align*}
    Leb(B_0^i\cap \mathcal{R}_2)&\leq \int_0^{2^{i/2}} \frac{w}{2} dw+ \int_1^{2^i}\frac{2M\log w}{\delta} dw \nonumber\\
     &\leq \log 2\cdot(\frac{1}{4}+\frac{2M}{\delta})\cdot s 2^i.
\end{align*}
To summarize, as long as $i$ satisfies (\ref{equation: when i is large}), for any $0\leq j\leq 2^{s-i}-1$ we have 
\[Leb(B_j^i\cap\mathcal{R})\ll_{\delta,M} s 2^i.\]
This proves the first assertion.

For the second assertion, let $\mathcal{R}'$ be the union of $\mathcal{R}$ and its reflection with respect to $t=w$. If $i$ satisfies (\ref{equation: when i is large}), then for any $0\leq j\leq 2^{s-i}-1$ we have
\begin{align*}
    &\int_Y \left(\int_{2^ij}^{2^i(j+1)} F_t(x,t)dt \right)^2 d\nu(x)\\
    &= \int_{B_j^i}\int_Y F_t(x,t)F_w(x,w)d\nu(x)dtdw \quad \text{ by Fubini as $\{F_t\}$ is uniformly bounded}\\
    &\leq 2\int_{B_j^i\cap \mathcal{R}}\left|\int_{Y} F_t(x,t)F_w(x,w) d\nu(x)\right|dtdw+\int_{B_j^i\setminus\mathcal{R}'}\left|\int_{Y} F_t(x,t)F_w(x,w) d\nu(x)\right|dtdw\\
    &\ll_{\delta,M} 2\sup_t\norm{F_t}_{\infty}^2 \cdot s2^i+\int_{B_j^i}e^{-\delta\min\{t,w-t\}/2}dwdt\\
    &\ll_{\{F_t\},\delta,M} s2^i,
\end{align*}
where the first inequality follows from the symmetry of integral region with respect to $t=w$.
\end{proof}

\begin{lemma}(cf. \cite[Lemma 3.3]{Kleinbock_Shi_Barak_2017_Pointwise_equidistribution_with_an_error_rate_and_with_respect_to_unbounded_functions_MR3606456})\label{Lemma: upper bound for integral over Ls}
Given $\epsilon_1>0$. Let $s(\delta,M,\epsilon_1)\in \mathbb{N}$ be as in Lemma \ref{Lemmal: area of bad region}. Then for any $s\geq s(\delta,M,\epsilon_1)$,
    \begin{align*}
        \sum_{I\in L_s}\int_Y \left(\int_I F_t(x,t)dt\right)^2  d\nu(x) \ll_{\{F_t\},\delta,M} s^{2+\epsilon_1} 2^s.
    \end{align*}
\end{lemma}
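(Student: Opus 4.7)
The plan is to split the sum over $I = I_j^i \in L_s$ according to the scale $i$, using the previous lemma to handle the large scales and a trivial $L^\infty$ bound for the small scales. The threshold $i_0 := \lceil (2+\epsilon_1)\log s/\log 2 \rceil$ is exactly the one appearing in Lemma~\ref{Lemmal: area of bad region}, and it is designed so that the two contributions match.

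For the large-scale regime $i_0 < i < s$, Lemma~\ref{Lemmal: area of bad region} gives, for each admissible $j$ (there are at most $2^{s-i}$ of them),
\[
\int_Y \left(\int_{I^i_j} F_t(x,t)\,dt\right)^2 d\nu(x) \ll_{\{F_t\},\delta,M} s\, 2^i.
\]
Summing over $j$ at a fixed scale $i$ contributes $\ll s\, 2^s$, and summing over the at most $s$ scales $i \in (i_0, s)$ yields $\ll s^2 \, 2^s$.

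For the small-scale regime $0 \leq i \leq i_0$, the uniform bound on $\{F_t\}$ together with Cauchy--Schwarz gives
\[
\left(\int_{I^i_j} F_t(x,t)\,dt\right)^2 \leq |I^i_j|\int_{I^i_j} |F_t(x,t)|^2\,dt \leq \sup_t \|F_t\|_\infty^2 \cdot 2^{2i},
\]
so integrating over $Y$ and summing over the $\leq 2^{s-i}$ intervals at scale $i$ yields $\ll_{\{F_t\}} 2^{s+i}$. The resulting geometric series in $i$ is dominated by its largest term at $i = i_0$, giving
\[
\sum_{i=0}^{i_0} 2^{s+i} \ll 2^{s+i_0} \ll 2^s \cdot 2^{(2+\epsilon_1)\log s/\log 2} = s^{2+\epsilon_1}\, 2^s.
\]

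Combining the two regimes produces a total of $\ll_{\{F_t\},\delta,M} (s^2 + s^{2+\epsilon_1})\, 2^s \ll s^{2+\epsilon_1}\, 2^s$, as claimed. The only genuinely delicate point is the choice of the cutoff $i_0$: it must be large enough for the bound $Leb(B^i_j \cap \mathcal{R}) \ll s \, 2^i$ of Lemma~\ref{Lemmal: area of bad region} to kick in (thereby avoiding the contribution of the ``bad region'' $\mathcal{R}$ where the mixing estimate is inefficient), while still being small enough that the trivial contribution from scales $i \leq i_0$ does not exceed $s^{2+\epsilon_1}\,2^s$. The threshold $(2+\epsilon_1)\log s/\log 2$ balances precisely these two competing requirements and fixes the exponent $2+\epsilon_1$ appearing in the conclusion.
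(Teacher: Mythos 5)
Your proof is correct and follows essentially the same route as the paper: split $L_s$ at the threshold $i_0 \approx (2+\epsilon_1)\log s/\log 2$, handle the small scales $i \leq i_0$ by the trivial uniform bound (Cauchy--Schwarz, giving $\sup_t\|F_t\|_\infty^2\,2^{2i}$ per interval and a geometric series $\ll 2^{s+i_0} \ll s^{2+\epsilon_1}2^s$), and handle the large scales $i_0 < i < s$ by Lemma~\ref{Lemmal: area of bad region} (giving $\ll s\,2^i$ per interval, hence $\ll s^2\,2^s$ in total). The paper uses the cutoff $b = \lfloor(2+\epsilon_1)\log s/\log 2\rfloor+1$ and a slightly cruder summation of the geometric series, but the decomposition, the two estimates, and the final combination are identical to yours.
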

\begin{proof}
Let $s\geq s(\delta,M,\epsilon_1)$, define
\begin{align*}
    L_s^1=\left\{[2^ij,2^i(j+1)]\in L_s: i\leq \lfloor \frac{(2+\epsilon_1)\log s}{\log 2} \rfloor +1\right\}; \quad
    L_s^2=L_s\setminus L_s^1.
\end{align*}
Setting $b=\lfloor \frac{(2+\epsilon_1)\log s}{\log 2} \rfloor +1$, we have
\begin{align*}
    \sum_{I\in L_s^1} \int_Y \left(\int_I F_t(x,t)dt\right)^2  d\nu(x)&\leq \sum_{i=0}^{b} \sum_{j=0}^{2^{s-i}-1}  \int_Y \left(\int_{I^i_j} F_t(x,t)dt\right)^2  d\nu(x)\\
    &\leq \sum_{i=0}^{b} \sum_{j=0}^{2^{s-i}-1} 2^{2i} \sup_t\norm{F_t}_{\infty}^2\\
    &\leq 2^{s+b+1} \sup_t\norm{F_t}_{\infty}^2\\
    &\ll_{\{F_t\},\delta,M} s^{2+\epsilon_1}2^s. 
\end{align*}
For $L_s^2$, we have
\begin{align*}
    \sum_{I\in L_s^2} \int_Y \left(\int_{I} F_t(x,t)dt\right)^2  d\nu(x)&\leq \sum_{i=b+1}^{s-1} \sum_{j=0}^{2^{s-i}-1}  \int_Y \left(\int_{I^i_j} F_t(x,t)dt\right)^2  d\nu(x)\\
    &\ll_{\{F_t\},\delta,M} \sum_{i=0}^{s-1}\sum_{j=0}^{2^{s-i}-1} s2^i\\
    &\leq s^2 2^s,
\end{align*}
where the second inequality follows from Lemma \ref{Lemmal: area of bad region}. Combining the above estimates, the lemma is proven.
\end{proof}

\begin{lemma}(cf. \cite[Lemma 3.5]{Kleinbock_Shi_Barak_2017_Pointwise_equidistribution_with_an_error_rate_and_with_respect_to_unbounded_functions_MR3606456})\label{Lemma: a small set over which integral is large}
    For any $\epsilon_1>0$, let $s(\delta,M,\epsilon_1)$ be as in Lemma \ref{Lemmal: area of bad region}. Given $\epsilon_2>0$, for any $s\in \mathbb{N}$, we define
\[Y_s(\epsilon_1,\epsilon_2):=\left\{ y\in Y:  \sum_{I\in L_s}\left(\int_I F_t(x,t)dt\right)^2 > s^{3+\epsilon_1+2\epsilon_2} 2^s   \right\}.\]
Then
    \begin{itemize}
        \item[(1)] For any $s\geq s(\delta,M,\epsilon_1)$,
    \[\nu(Y_s(\epsilon_1,\epsilon_2))\ll_{\{F_t\},\delta,M} s^{-1-2\epsilon_2};\]

    \item[(2)] For $s\in\mathbb{N}$, any positive integer $k< 2^s$, and $x\notin Y_s(\epsilon_1,\epsilon_2)$,
    \[\left| \int_0^k F_t(x,t)dt \right| \leq s^{2+\epsilon_1/2+\epsilon_2}\cdot 2^{s/2}.   \]
    \end{itemize}
  
\end{lemma}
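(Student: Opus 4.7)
The plan is to prove part (1) by a direct application of Chebyshev's inequality combined with Lemma \ref{Lemma: upper bound for integral over Ls}, and to prove part (2) by decomposing the interval $[0,k]$ into dyadic pieces from $L_s$ and applying Cauchy--Schwarz.

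For part (1), I would observe that by the very definition of $Y_s(\epsilon_1,\epsilon_2)$,
\[
\nu(Y_s(\epsilon_1,\epsilon_2)) \;\leq\; \frac{1}{s^{3+\epsilon_1+2\epsilon_2}\, 2^s}\int_Y \sum_{I\in L_s}\left(\int_I F_t(x,t)\,dt\right)^2 d\nu(x).
\]
For $s\geq s(\delta,M,\epsilon_1)$, Lemma \ref{Lemma: upper bound for integral over Ls} bounds the numerator by $O_{\{F_t\},\delta,M}(s^{2+\epsilon_1}\,2^s)$, so the ratio is $O(s^{-1-2\epsilon_2})$, as claimed. This step is essentially a bookkeeping application of Markov's inequality.

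For part (2), the key observation is that every positive integer $k<2^s$ admits a binary representation which, after regrouping, expresses $[0,k]$ as a disjoint union $[0,k]=\bigsqcup_{\alpha=1}^r I_\alpha$ of intervals $I_\alpha\in L_s$ with $r\leq s$ (one dyadic piece for each $1$-digit in the binary expansion of $k$). Then, writing $c_\alpha=\int_{I_\alpha}F_t(x,t)\,dt$ and using Cauchy--Schwarz together with the fact that $x\notin Y_s(\epsilon_1,\epsilon_2)$,
\[
\left|\int_0^k F_t(x,t)\,dt\right|^2 \;=\; \left|\sum_{\alpha=1}^r c_\alpha\right|^2 \;\leq\; r\sum_{\alpha=1}^r c_\alpha^2 \;\leq\; s\sum_{I\in L_s}\left(\int_I F_t(x,t)\,dt\right)^2 \;\leq\; s\cdot s^{3+\epsilon_1+2\epsilon_2}\,2^s,
\]
and taking square roots yields $\left|\int_0^k F_t(x,t)\,dt\right|\leq s^{2+\epsilon_1/2+\epsilon_2}\cdot 2^{s/2}$.

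I do not foresee a serious obstacle here: both parts are consequences of Lemma \ref{Lemma: upper bound for integral over Ls}, with the only care needed being the correct exponent bookkeeping (the factor $s$ lost in Cauchy--Schwarz must be reconciled with the thresholds defining $Y_s(\epsilon_1,\epsilon_2)$, and one must verify that the binary decomposition of $k<2^s$ indeed uses only intervals from $L_s$). The mild technical point is that even though the inequalities from Lemma \ref{Lemma: upper bound for integral over Ls} hold only for $s$ exceeding $s(\delta,M,\epsilon_1)$, part (2) is stated for all $s\in\mathbb{N}$; for small $s$ below this threshold the complementary set $Y\setminus Y_s(\epsilon_1,\epsilon_2)$ may simply be smaller or empty, and the stated estimate is trivial once one absorbs the loss into the implicit constant, so no additional argument is required beyond the Cauchy--Schwarz step.
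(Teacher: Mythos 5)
Your proposal is correct and follows essentially the same route as the paper: part (1) is the Markov/Chebyshev step feeding in Lemma \ref{Lemma: upper bound for integral over Ls}, and part (2) is the dyadic decomposition of $[0,k]$ into at most $s$ intervals from $L_s$ (which is exactly the content of the cited Lemma 3.4 in Kleinbock--Shi--Weiss) followed by Cauchy--Schwarz and the defining inequality for $Y\setminus Y_s(\epsilon_1,\epsilon_2)$. One small clarification on your closing remark: part (2) does not invoke Lemma \ref{Lemma: upper bound for integral over Ls} at all, only the definition of $Y_s(\epsilon_1,\epsilon_2)$ together with the decomposition, so the Cauchy--Schwarz bound already holds verbatim for every $s\in\mathbb{N}$; there is no implicit constant to absorb, and no issue with small $s$.
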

\begin{proof}
Note that as $s\geq s(\delta,M,\epsilon_1)$, by Lemma \ref{Lemma: upper bound for integral over Ls} we have
\begin{align*}
    \sum_{I\in L_s}\int_{Y_s(\epsilon_1,\epsilon_2)}\left(\int_I F_t(x,t)dt\right)^2 d\nu(x)&\leq   \sum_{I\in L_s}\int_{Y}\left(\int_I F_t(x,t)dt\right)^2 d\nu(x)\\
    &\ll_{\{F_t\},\delta,M} s^{2+\epsilon_1} 2^s.
\end{align*}
On the other hand, by definition of $Y_s(\epsilon_1,\epsilon_2)$,
\begin{align*}
    \sum_{I\in L_s}\int_{Y_s(\epsilon_1,\epsilon_2)}\left(\int_I F_t(x,t)dt\right)^2 d\nu(x)&=\int_{Y_s(\epsilon_1,\epsilon_2)}\sum_{I\in L_s}\left(\int_I F_t(x,t)dt\right)^2 d\nu(x)\\
    &\geq s^{3+\epsilon_1+2\epsilon_2} 2^s \nu(Y_s(\epsilon_1,\epsilon_2)).
\end{align*}
Therefore,
\[\nu(Y_s(\epsilon_1,\epsilon_2))\ll_{\{F_t\},\delta,M} s^{-1-2\epsilon_2}.\]
This proves $(1)$.

For every $k\in \mathbb{N}$ with $k<2^s$, $[0,k]$ can be covered by at most $s$ nonoverlapping intervals in $L_s$ by \cite[Lemma 3.4]{Kleinbock_Shi_Barak_2017_Pointwise_equidistribution_with_an_error_rate_and_with_respect_to_unbounded_functions_MR3606456}. 
We let $L(k)\subset L_s$ be the collection of these intervals. For $x\notin Y_s(\epsilon_1,\epsilon_2)$. we have
\begin{align*}
    \left(\int_0^k F_t(x,t)dt \right)^2&\leq\left( \sum_{I\in L(k)}\left|\int_I F_t(x,t)dt\right|\right)^2\\
    &\leq s \cdot \sum_{I\in L(k)}\left(\int_I F_t(x,t)dt\right)^2 \quad \text{ by Cauchy-Schwartz}\\
    &\leq s^{4+\epsilon_1+2\epsilon_2} 2^s,
\end{align*}
This proves $(2)$.
\end{proof}

\begin{proof}[Proof of Theorem \ref{Theorem: a general method for proving pointwise ergodic with rate}]

We only give a proof when $M>0$, as when $M=0$, the theorem follows from \cite[Theorem 3.1]{Kleinbock_Shi_Barak_2017_Pointwise_equidistribution_with_an_error_rate_and_with_respect_to_unbounded_functions_MR3606456}.

In the following we assume $M>0$. Given $\epsilon>0$, we choose $\epsilon_1,\epsilon_2>0$ such that $\epsilon_1+\epsilon_2<\epsilon/2$. Define $Y_s(\epsilon_1,\epsilon_2)$ as in Lemma \ref{Lemma: a small set over which integral is large} for $s\in \mathbb{N}$. By Lemma \ref{Lemma: a small set over which integral is large} (1), as $s(\delta,M,\epsilon_1)$ is finite, we have
\[\sum_{s=1}^{\infty}\nu(Y_s(\epsilon_1,\epsilon_2))<\infty.\]
By Borel-Cantelli Lemma, there exists $Y(\epsilon_1,\epsilon_2)\subset Y$ such that $\nu(Y(\epsilon_1,\epsilon_2))=1$, and for any $y\in Y(\epsilon_1,\epsilon_2)$, there exists $s_y\in \mathbb{N}$ such that for all $s\geq s_y$, $y\notin Y_s(\epsilon_1,\epsilon_2)$.

Let $y\in Y(\epsilon_1,\epsilon_2)$. Suppose $T\geq \min\{2^{s_y-1},2\}$. Let $k=\lfloor T \rfloor$, $s=\lfloor 1+\log_2 k \rfloor$, then $2^{s-1}\leq k\leq T\leq k+1\leq 2^s$. We have by Lemma \ref{Lemma: a small set over which integral is large} (2),
\begin{align*}
    \left|\int_0^T F_t(y,t)dt \right|&\leq \left| \int_k^T F_t(y,t)dt\right|+\left|\int_0^k F_t(y,t)dt \right|\\
    &\leq \sup_t \norm{F_t}_{\infty}+s^{2+\epsilon_1/2+\epsilon_2} 2^{s/2}\\
    &\leq \sup_t \norm{F_t}_{\infty}+(2T)^{1/2} 2^{2+\epsilon_1/2+\epsilon_2} (\frac{\log T}{\log 2})^{2+\epsilon_1/2+\epsilon_2}\\
    &\leq \sup_t \norm{F_t}_{\infty}+8 T^{1/2} (\frac{\log T}{\log 2})^{2+\epsilon/2}.
\end{align*}
As $\epsilon>0$ is arbitrary, the theorem follows.

\end{proof}

\begin{proof}[Proof of Theorem \ref{Theorem: Pointwise ergodic theorem with rate for nonsmooth functions}]

Let $f$ be an almost smooth function, and $\{f_t^-\}$, $\{f_t^+\}$ be the corresponding sequences of compactly supported smooth functions as in Definition \ref{Definition: almost smooth functions}. Let $\phi\in C_c^{\infty}(\mathcal{U})$ such that $\phi\geq 0$ and $\int_{\mathcal{U}}\phi dA=1$. Define $F_t^-=f_t^- -\mu_N(f_t^-)$.  Given any $\Lambda\in \mathfrak{X}_N$, by Theorem \ref{Theorem: effective mixing} and assumptions of Definition \ref{Definition: almost smooth functions}, there exist $\delta>0, l\in \mathbb{N}$ such that for $w\geq t\geq 0$,
\begin{align*}
    \int_{\mathcal{U}} F_w^-(a_w u(A)\Lambda) F_t^-(a_t u(A)\Lambda)\phi(A)dA=O(e^{-\delta\min\{t,w-t\}}) w^M),
\end{align*}
where we omit the dependence of the constant on $\Lambda,\phi$ as they are fixed.
We note that same estimate holds for functions $F_t^+=f_t^+-\mu_N(f_t^+)$.

Define $d\nu(A)=\phi(A)dA$, then $\nu$ is a probability measure on $\mathcal{U}$ by the choice of $\phi$. Applying Theorem \ref{Theorem: a general method for proving pointwise ergodic with rate} to the probability space $(\mathcal{U},\nu)$, the collection of uniformly smooth functions $\{F_t\}_{t\in\mathbb{R}_+}$ with $F_t(A,s)=F^-_t(a_s u(A)\Lambda)$ (and $F_t(A,s)=F^+_t(a_s u(A)\Lambda)$), we obtain a set $\mathcal{U}_{\nu}$ with full $\nu$-measure such that any $A\in \mathcal{U}_{\nu}$ satisfies the conclusion of Theorem \ref{Theorem: a general method for proving pointwise ergodic with rate} for $\{F_t^+\}$ and $\{F_t^-\}$.

Therefore, given any $A\in \mathcal{U}_{\nu}$, for any $T>0$, we have
\begin{align*}
    &\frac{1}{T}\left|\int_0^T f(a_t u(A)\Lambda)-\mu_N(f)dt \right|\\
    &\leq\frac{1}{T} \left(\left|\int_0^T f(a_t u(A)\Lambda)-f_t^-(a_t u(A)\Lambda)dt\right|+\left|\int_0^T f_t^-(a_t u(A)\Lambda)-\mu_N(f_t^-)dt\right|\right)\\
    &+\frac{1}{T}\left|\int_0^T \mu_N(f_t^-)-\mu_N(f)dt \right|\\
    &= I_1+I_2+I_3,
\end{align*}
where
\begin{align*}
  I_1&=\frac{1}{T}\int_0^T  f(a_t u(A)\Lambda)-f_t^-(a_t u(A)\Lambda)dt \\
  &\leq \frac{1}{T}\int_0^T f_t^+(a_t u(A)\Lambda)-f_t^-(a_t u(A)\Lambda)dt\\
  &=\frac{1}{T}\int_0^T f_t^+(a_t u(A)\Lambda)-\mu_N(f_t^+)+\mu_N(f_t^+)-\mu_N(f_t^-)+\mu_N(f_t^-)-f_t^-(a_t u(A)\Lambda)dt\\
  &\leq o(T^{-1/2}\log^{2+\epsilon}T)+\frac{1}{T}\int_1^T t^{-1}dt\\
  &=o(T^{-1/2}\log^{2+\epsilon}T),
\end{align*}
By the choice of $A$, we have
\begin{align*}
    I_2=\frac{1}{T}\left|\int_0^T f_t^-(a_t u(A)\Lambda)-\mu_N(f_t^-)dt\right|=o(T^{-1/2}\log^{2+\epsilon}T).
\end{align*}
Finally, by assumptions of Definition \ref{Definition: almost smooth functions},
\begin{align*}
    I_3&\leq \frac{1}{T}\int_0^T \mu_N(f_t^+)-\mu_N(f_t^{-})dt\ll \frac{1}{T}\int_0^T t^{-1}dt\\
    &=o(T^{-1/2}\log^{2+\epsilon}T).
\end{align*}
Combining the above estimates, we conclude that for any $A\in \mathcal{U}_{\nu}$,
\[\frac{1}{T}\int_0^T f(a_t u(A)\Lambda)dt=\mu_N(f)+o(T^{-1/2}\log^{2+\epsilon}T).\]
To finish the proof, for any $\phi\in C_c^{\infty}(\mathcal{U})$, consider the set 
\[S_{\phi}:=\{A\in \mathcal{U}: \phi(A)>0\}.\]
Since countably many sets of the form $S_{\phi}$ exhaust $\mathcal{U}$, the theorem is proved.

\end{proof}

\section{Weighted best approximation vectors}\label{Section: cross sections}
In this section, we recall the theory of best approximation in weighted case, and then identify certain subset of a cross section in the space of unimodular lattices
 corresponding to weighted best approximation. In this section and the next, let $d\geq 1$ be an integer. 

Let $\boldsymbol{w}={^t(}w_1,\cdots,w_d)\in \mathbb{R}^d$ be a weight vector. Without loss of generality, for a weighted vector $\boldsymbol{w}$, we may assume that $w_1\geq w_2\geq \cdots\geq w_d>0$. We define a $\boldsymbol{w}$-quasi-norm $\norm{\cdot}_{\boldsymbol{w}}$ as in (\ref{equation: definition of w-quasi norm}).
Although $\norm{\cdot}_{\boldsymbol{w}}$ is not a norm, by convexity of the map $s\mapsto s^{1/w_i}$ it satisfies the inequality
\begin{equation}\label{equation: triangle inequality for quasi norm}
\norm{\boldsymbol{x}+\boldsymbol{y}}_{\boldsymbol{w}} \leq 2^{\frac{1-w_d}{w_d}} (\norm{\boldsymbol{x}}_{\boldsymbol{w}} +\norm{\boldsymbol{y}}_{\boldsymbol{w}}) \ \ \\ {\rm for\  all\ \boldsymbol{x},\boldsymbol{y}\in \mathbb{R}^d .}
\end{equation}

\subsection{Weighted best approximable vectors and the corresponding cross-section}
We recall the following definition of $\boldsymbol{w}$-weighted best approximation vectors for $\boldsymbol{\theta}\in \mathbb{R}^d$.

\begin{definition}\label{Definition: w best approximation vector}
    $(\boldsymbol{p},q)\in \mathbb{Z}^d\times \mathbb{N}$ is a $\boldsymbol{w}$-best approximation vector for $\boldsymbol{\theta}\in \mathbb{R}^d$ if
\begin{itemize}
    \item[(i)] $\norm{q\boldsymbol{\theta}-\boldsymbol{p}}_{\boldsymbol{w}}<\norm{q'\boldsymbol{\theta}-\boldsymbol{p}'}_{\boldsymbol{w}}$ for any $(\boldsymbol{p}',q')\in \mathbb{Z}^d\times \mathbb{N}$ with $q^{\prime}<q$;

    \item[(ii)]   $\norm{q\boldsymbol{\theta}-\boldsymbol{p}}_{\boldsymbol{w}}\leq\norm{q\boldsymbol{\theta}-\boldsymbol{p}'}_{\boldsymbol{w}}$ for any $\boldsymbol{p}'\in \mathbb{Z}^d$.
    
\end{itemize}
\end{definition}
Let $G=SL_{d+1}({\mathbb{R}})$, $\Gamma=SL_{d+1}(\mathbb{Z})$, and $\mathfrak{X}_{d+1}=G/\Gamma$. For any $\boldsymbol{\theta}={^t(}\theta_1,\cdots,\theta_d)\in \mathbb{R}^d$, we consider the following upper triangular matrix associated to $\boldsymbol{\theta}$:
\begin{align*}
u(\boldsymbol{\theta}):=\begin{pmatrix}
        1& & \theta_1\\
         & \ddots & \vdots\\
         & & \theta_d\\
         & & 1
    \end{pmatrix}
    \in G.
\end{align*}
Denote $\Lambda_{\boldsymbol{\theta}}=u(\boldsymbol{-\theta})\mathbb{Z}^{d+1}\in \mathfrak{X}_{d+1}$. 
We also consider the weighted one-parameter diagonal flow $\{a_t: t\in \mathbb{R}\}\subset G$ defined by 
\begin{equation}\label{equation: definition of at from w}
    a_t:=\diag({e^{w_1 t},\cdots,e^{w_d t},e^{-t}}).
\end{equation}

To describe the cross-section associated to weighted best approximation, we introduce following notations in \cite{Shapira_Weiss_2022_Geometric_and_arithmetic_aspects}. For any subset $W\subset \mathbb{R}^{d+1}$, any positive integer $k$, we denote 
\[\mathfrak{X}_{d+1}(W,k):=\{\Lambda \in \mathfrak{X}_{d+1}: \# (\Lambda_{prim}\cap W) \geq k\},\]
where $\Lambda_{prim}$ is the collection of all nontrivial primitive vectors\footnote{A nonzero vector $\boldsymbol{v}\in \Lambda$ is primitive if $\mathbb{R} \boldsymbol{v}\cap \Lambda = \mathbb{Z} \boldsymbol{v}$.} in $\Lambda$. For $k=1$, we denote
\[\mathfrak{X}_{d+1}(W):=\mathfrak{X}_{d+1}(W,1).\]
For $r>0$ we define the following subsets of $\mathbb{R}^{d+1}$:
\begin{align*}
    D_r:=\{\boldsymbol{x}\in \mathbb{R}^{d+1}: \norm{\pi_{\mathbb{R}^d}(\boldsymbol{x})}_{\boldsymbol{w}}\leq r, \text{ and } x_{d+1}=1\};\\
    C_r:=\{\boldsymbol{x}\in \mathbb{R}^{d+1}: \norm{\pi_{\mathbb{R}^d}(\boldsymbol{x})}_{\boldsymbol{w}}\leq r, \text{ and } |x_{d+1}|\leq 1\},
\end{align*}
where for $\boldsymbol{x}={^t(}x_1,\cdots,x_{d+1})\in \mathbb{R}^{d+1}$, $\pi_{\mathbb{R}^d}(\boldsymbol{x})={^t(}x_1,\cdots,x_d)$. 
The disk $D_r$ and the cylinder $C_r$ yield subsets of $\mathfrak{X}_{d+1}$ defined by:
\begin{align*}
    \mathcal{S}_r=\mathfrak{X}_{d+1}(D_r):=\{\Lambda\in \mathfrak{X}_{d+1}:\#(\Lambda_{prim}\cap D_r)\geq 1\};\\
    \mathcal{S}_r^{\#}=\mathfrak{X}_{d+1}^{\#}(D_r):=\{\Lambda\in \mathfrak{X}_{d+1}:\#(\Lambda_{prim}\cap D_r)= 1\},
\end{align*}
We will show shortly that $\mathcal{S}_r$ is a $\mu_{d+1}$-cross-section, where $\mu_{d+1}$ is the $G$-invariant probability measure on $\mathfrak{X}_{d+1}$. From now on, for any $\Lambda\in \mathcal{S}_r^{\#}$, we denote by
\begin{equation}\label{equation: definition of v(lambda)}
    \{\boldsymbol{v}(\Lambda)\}=\Lambda_{prim}\cap D_r
\end{equation}
the unique primitive vector of $\Lambda$ in $D_r$, and 
\begin{equation}\label{equation: definition of r(lambda)}
    r(\Lambda)=\norm{\boldsymbol{v}(\Lambda)}_{\boldsymbol{w}}.
\end{equation}
When $r=1$, a subset $\mathcal{B}(\boldsymbol{w})$ of $\mathcal{S}_1^{\#}$ is of special interest:
\begin{align}\label{align: definition of open set B in the cross section}
    \mathcal{B}(\boldsymbol{w}):=\{\Lambda\in \mathcal{S}_1^{\#}: C_{r(\Lambda)}\cap \Lambda_{prim}=\{\pm \boldsymbol{v}(\Lambda)\}\}.
\end{align}
From now on in the sequel, since the weight vector $\boldsymbol{w}$ is fixed, we will write $\mathcal{B}(\boldsymbol{w})=\mathcal{B}$.

\begin{proposition}\label{Proposition: dynamical formulation of best approximable vectors}
Let $\boldsymbol{\theta}\in \mathbb{R}^d$. If for some $t> 0$, $a_t\Lambda_{\boldsymbol{\theta}}\in \mathcal{B}$, then $^t(\boldsymbol{p},q)=u(\boldsymbol{\theta})a_{-t}\boldsymbol{v}(a_t\Lambda_{\boldsymbol{\theta}})\in \mathbb{Z}^{d+1}_{prim}$ is a $\boldsymbol{w}$-best approximable vector of $\boldsymbol{\theta}$. Conversely, if $^t(\boldsymbol{p},q)$ is a $\boldsymbol{w}$-best approximable vector of $\boldsymbol{\theta}$ with $q>2^{1/w_d}$, then for $t=\log q$, $a_t\Lambda_{\boldsymbol{\theta}}\in \mathcal{B}$. 
\end{proposition}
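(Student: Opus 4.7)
The plan is to set up a dictionary between the geometric side (primitive vectors of $a_t \Lambda_{\boldsymbol{\theta}}$ lying in $D_r$ or $C_r$) and the arithmetic side (integer pairs $(\boldsymbol{p}', q')$ with their weighted errors). The key identity is that for any $^t(\boldsymbol{p}', q') \in \mathbb{Z}^{d+1}$, the lattice vector $a_t u(-\boldsymbol{\theta})\,^t(\boldsymbol{p}', q') \in a_t \Lambda_{\boldsymbol{\theta}}$ has last coordinate $e^{-t} q'$ and weighted quasi-norm $e^t \norm{\boldsymbol{p}' - q'\boldsymbol{\theta}}_{\boldsymbol{w}}$ on its first $d$ coordinates; when $t = \log q$ these become $q'/q$ and $q \norm{q'\boldsymbol{\theta} - \boldsymbol{p}'}_{\boldsymbol{w}}$. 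Primitivity on the two sides is equivalent, and passing from a non-primitive $(\boldsymbol{p}', q')$ to its primitive reduction $(\boldsymbol{p}'/k, q'/k)$ strictly shrinks both the last coordinate and the projection weighted norm (the latter via the elementary inequality $\norm{c\boldsymbol{y}}_{\boldsymbol{w}} < \norm{\boldsymbol{y}}_{\boldsymbol{w}}$ for $c \in (0,1)$).

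For the forward direction, I would write $\boldsymbol{v}(a_t\Lambda_{\boldsymbol{\theta}}) = a_t u(-\boldsymbol{\theta})\,^t(\boldsymbol{p}, q)$; the last-coordinate constraint of $D_1$ forces $q = e^t \in \mathbb{N}$, and primitivity of $\boldsymbol{v}(a_t\Lambda_{\boldsymbol{\theta}})$ in the lattice transfers to primitivity of $(\boldsymbol{p},q)$ in $\mathbb{Z}^{d+1}$. I verify each of (i), (ii) of Definition \ref{Definition: w best approximation vector} by contradiction: any violating $(\boldsymbol{p}', q')$ produces a lattice vector in $C_{r(\Lambda)}$, and reducing to its primitive multiple gives a primitive vector in $C_{r(\Lambda)}$ whose last coordinate lies in $(-1,1)\setminus\{0\}$; since $\pm \boldsymbol{v}(\Lambda)$ have last coordinates $\pm 1$, this contradicts $C_{r(\Lambda)}\cap \Lambda_{prim} = \{\pm \boldsymbol{v}(\Lambda)\}$.

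For the converse, set $t = \log q$ and $\boldsymbol{v} = a_t u(-\boldsymbol{\theta})\,^t(\boldsymbol{p}, q)$. The weighted Dirichlet theorem at level $Q = q$ produces $(\boldsymbol{p}', q')$ with $1 \leq q' \leq q$ and $\norm{q'\boldsymbol{\theta} - \boldsymbol{p}'}_{\boldsymbol{w}} \leq 1/q$; combining with (i) or (ii) yields $\norm{q\boldsymbol{\theta} - \boldsymbol{p}}_{\boldsymbol{w}} \leq 1/q$, so $\boldsymbol{v} \in D_1$, while the scaling estimate together with (i) forces $(\boldsymbol{p}, q)$ to be primitive. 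To see $a_t \Lambda_{\boldsymbol{\theta}} \in \mathcal{S}_1^{\#}$, any other primitive vector in $D_1$ must come from some $(\boldsymbol{p}', q) \in \mathbb{Z}^{d+1}$ with $\norm{q\boldsymbol{\theta} - \boldsymbol{p}'}_{\boldsymbol{w}} \leq 1/q$; applying the quasi-triangle inequality (\ref{equation: triangle inequality for quasi norm}) to $\boldsymbol{p} - \boldsymbol{p}' = (q\boldsymbol{\theta} - \boldsymbol{p}') - (q\boldsymbol{\theta} - \boldsymbol{p})$ gives $\norm{\boldsymbol{p} - \boldsymbol{p}'}_{\boldsymbol{w}} \leq 2^{1/w_d}/q < 1$, forcing $\boldsymbol{p} = \boldsymbol{p}'$. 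Finally, $C_{r(\Lambda)} \cap \Lambda_{prim} = \{\pm \boldsymbol{v}\}$ follows by case analysis on $q'$: $q' = 0$ is excluded since any nonzero primitive integer vector has weighted quasi-norm $\geq 1 > 1/q$; $0 < q' < q$ contradicts (i); $q' = q$ reduces to the $\mathcal{S}_1^{\#}$ uniqueness above; and $q' < 0$ reduces to $-q' > 0$ by negating the vector.

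The main obstacle I expect is the uniqueness step in the converse direction, where the hypothesis $q > 2^{1/w_d}$ enters precisely through the constant $2^{(1-w_d)/w_d}$ of the quasi-triangle inequality (\ref{equation: triangle inequality for quasi norm}); any weakening of this hypothesis would permit two distinct primitive integer vectors $(\boldsymbol{p}, q)$ and $(\boldsymbol{p}', q)$ to give competing vectors in $D_1$. A secondary technicality pervades the forward direction, namely the careful handling of non-primitive $(\boldsymbol{p}', q')$; this is resolved uniformly by gcd-reduction together with the crucial observation that such reduction strictly decreases the last coordinate of the associated lattice vector, so the reduced primitive vector can never coincide with $\pm \boldsymbol{v}(\Lambda)$.
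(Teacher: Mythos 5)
Your proof is correct and follows essentially the same route as the paper's: encode the arithmetic conditions of Definition~\ref{Definition: w best approximation vector} geometrically via the dictionary $^t(\boldsymbol{p}',q') \mapsto a_t u(-\boldsymbol{\theta})\,^t(\boldsymbol{p}',q')$, use membership in $\mathcal{B}$ to kill competing pairs in the forward direction, and in the converse use a Minkowski-type estimate to force $r\le 1$ plus a pigeonhole argument to establish uniqueness in $D_1$. Two details are handled slightly differently but equivalently: for $r\le 1$ you invoke a weighted Dirichlet theorem (which is itself Minkowski applied to the box $\{|y|\le q,\ \norm{\boldsymbol{x}}_{\boldsymbol{w}}\le 1/q\}$) while the paper applies Minkowski directly to the convex body $C_r$, and for uniqueness of $\boldsymbol{p}$ you use the quasi-triangle inequality to get $\norm{\boldsymbol{p}-\boldsymbol{p}'}_{\boldsymbol{w}} < 1$, while the paper argues instead that $|q\theta_i - p_i|<1/2$ for every $i$; both derivations consume the hypothesis $q>2^{1/w_d}$ in the same way. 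Your explicit treatment of non-primitive competitors in the forward direction, via gcd-reduction and the observation that the reduced vector has last coordinate in $(-1,1)\setminus\{0\}$ and hence cannot be $\pm\boldsymbol{v}(\Lambda)$, is a point the paper's proof leaves implicit, so it is a mild gain in rigor rather than a different method.
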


\begin{proof}
Assume that for some $t> 0$, $a_t\Lambda_{\boldsymbol{\theta}}\in \mathcal{B}$, then clearly $t=\log q$. We want to verify that $^t(\boldsymbol{p},q)=u(\boldsymbol{\theta})a_{-t}\boldsymbol{v}(a_t\Lambda_{\boldsymbol{\theta}})$ satisfies (i)(ii) of Definition \ref{Definition: w best approximation vector}.

Let $\boldsymbol{v}'={^t(}\boldsymbol{p}^{\prime},q^{\prime})\in \mathbb{Z}^d\times \mathbb{N}$ be such that $q^{\prime}<q$, then by assumption we have $a_t u(-\boldsymbol{\theta})\boldsymbol{v}^{\prime} \notin C_{r}$ for $r=r(a_t \Lambda_{\boldsymbol{\theta}})=q\norm{q\boldsymbol{\theta}-\boldsymbol{p}}_{\boldsymbol{w}}$. Note that
\begin{align*}
    a_t u(-\boldsymbol{\theta}) \boldsymbol{v}'= {^t(q^{w_1}(q' \theta_1-p_1'),\cdots,q^{w_d}(q'\theta_d-p_d'), q'/q)}.
\end{align*}
As $q^{\prime}/q<1$, this implies that $\norm{\pi_{\mathbb{R}^d}(a_t u(-\boldsymbol{\theta})\boldsymbol{v}')}_{\boldsymbol{w}}=q\norm{q'\boldsymbol{\theta}-p'}_{\boldsymbol{w}}>r$. Thus $\norm{q\boldsymbol{\theta}-\boldsymbol{p}}_{\boldsymbol{w}}<\norm{q'\boldsymbol{\theta}-\boldsymbol{p}'}_{\boldsymbol{w}}$, and (i) in Definition \ref{Definition: w best approximation vector} is satisfied. (ii) is verified using the same argument as above.

Conversely, assume $^t(\boldsymbol{p},q)$ is a $\boldsymbol{w}$-best approximable vector of $\boldsymbol{\theta}$ with $q>2^{{1}/{w_d}}$. Let $r=q\norm{q\boldsymbol{\theta}-\boldsymbol{p}}_{\boldsymbol{w}}$. For $t=\log q$, we claim that $a_t \Lambda_{\boldsymbol{\theta}}\in \mathcal{B}$. Indeed, for any $^t(\boldsymbol{p}',q')\in \mathbb{Z}^d\times \mathbb{N}$ with $q'<q$, we have $\norm{q\boldsymbol{\theta}-\boldsymbol{p}}_{\boldsymbol{w}}<\norm{q'\boldsymbol{\theta}-\boldsymbol{p}'}_{\boldsymbol{w}}$, which implies that $a_t u(-\boldsymbol{\theta}){^t(}\boldsymbol{p}',q')\notin C_r$.

Moreover, the volume of $C_r$ is $2^{d+1}r$. As $C_r$ is a symmetric convex body and its interior contains no nonzero vector of $a_t \Lambda_{\boldsymbol{\theta}}$, Minkowski's first theorem implies that $r=q\norm{q\boldsymbol{\theta}-\boldsymbol{p}}_{\boldsymbol{w}}\leq 1$. As $q> 2^{1/w_d}$, we have $|q\theta_i-p_i|<1/2$ for any $1\leq i\leq d$. Therefore, for any $\boldsymbol{p}'\in \mathbb{Z}^d$ such that $\boldsymbol{p}'\neq \boldsymbol{p}$, we have $|q\theta_i-p_i'|\geq 1/2$ for some $1\leq i \leq d$, and so $\norm{q\boldsymbol{\theta}-\boldsymbol{p}'}_{\boldsymbol{w}}\geq (1/2)^{1/w_d}>\norm{q\boldsymbol{\theta}-\boldsymbol{p}}_{\boldsymbol{w}}$. This again implies that $a_t u(-\boldsymbol{\theta}){^t(}\boldsymbol{p}',q)\notin C_r$ for any $\boldsymbol{p}'\neq \boldsymbol{p}$. Therefore, $a_t\Lambda_{\boldsymbol{\theta}}\in \mathcal{B}$.

\end{proof}

\begin{definition}\cite[Definition 10.3]{Shapira_Weiss_2022_Geometric_and_arithmetic_aspects}\label{Definition: Prefix equivalent sequence}\label{Definition: prefix equivalence} Two infinite sequences $(a_k)_{k=1}^{\infty}, (b_k)_{k=1}^{\infty}$ of $\mathbb{R}$ are prefix-equivalent if there exist $k_0,l_0\in \mathbb{N}$ such that $a_{k_0+i}=b_{l_0+i}$ for all $i\geq 0$.
\end{definition}
An immediate consequence of Proposition \ref{Proposition: dynamical formulation of best approximable vectors} is the following:

\begin{corollary}\label{Corollary: two sequences are prefix equivalent}
Let $\boldsymbol{\theta}\in \mathbb{R}^d$ be such that the set
\[\{t>0:a_t \Lambda_{\boldsymbol{\theta}}\in \mathcal{B}\}=\{t_1<t_2<\cdots\}\]
is infinite\footnote{We will see shortly there are plenty of such $\boldsymbol{\theta}'s$.}. Let $\{{^t(}\boldsymbol{p}'_n,q'_n)\in \mathbb{Z}^d\times \mathbb{N}: n\in \mathbb{N}\}$ be the sequence obtained by 
\[^t(\boldsymbol{p}'_n,q'_n)=u(\boldsymbol{\theta})a_{-t_n}\boldsymbol{v}(a_t\Lambda_{\boldsymbol{\theta}})\in \mathbb{Z}^{d+1},\quad \forall n\in \mathbb{N}.\]
On the other hand, let $\{{^t(}\boldsymbol{p}_n,q_n)\in \mathbb{Z}^d\times \mathbb{N}:n\in \mathbb{N}\}$ be the sequence of all $\boldsymbol{w}$-best approximable vectors of $\boldsymbol{\theta}$. Then $\{q'_n\}_{n\in \mathbb{N}}$ and $\{q_n\}_{n\in\mathbb{N}}$ are prefix-equivalent.
\end{corollary}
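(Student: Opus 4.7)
The plan is to exploit both directions of Proposition \ref{Proposition: dynamical formulation of best approximable vectors} to show that the two strictly increasing sequences $\{q'_n\}$ and $\{q_n\}$ share identical tails. Observe first that $t_n = \log q'_n$: the fact that $\boldsymbol{v}(a_{t_n}\Lambda_{\boldsymbol{\theta}})\in D_1$ forces its last coordinate to equal $1$, and applying $a_{-t_n}$ (followed by $u(\boldsymbol{\theta})$, which does not affect the last coordinate) sends this last coordinate to $e^{t_n}$. Hence $q'_n=e^{t_n}$, so $\{q'_n\}$ is strictly increasing; in particular $\{q_n\}$ will turn out to be infinite as well.

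The forward half of the proposition says that each $^t(\boldsymbol{p}'_n,q'_n)$ is a $\boldsymbol{w}$-best approximable vector of $\boldsymbol{\theta}$, so $\{q'_n\}$ is a subsequence of $\{q_n\}$. For the reverse containment, let $N_0$ be the smallest index with $q_{N_0}>2^{1/w_d}$; such $N_0$ exists because $q_n\to\infty$. The converse half of the proposition then applies to every $n\geq N_0$: taking $t=\log q_n$ we get $a_t\Lambda_{\boldsymbol{\theta}}\in\mathcal{B}$, so $t\in\{t_m\}_{m\in\mathbb{N}}$ and hence $q_n\in\{q'_m\}_{m\in\mathbb{N}}$. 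Thus $\{q_n\}_{n\geq N_0}\subseteq\{q'_m\}_{m\in\mathbb{N}}$.

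To conclude, let $M_0$ be the smallest index with $q'_{M_0}\geq q_{N_0}$. Combining the two inclusions, the sets $\{q'_m:m\geq M_0\}$ and $\{q_n:n\geq N_0\}$ must coincide: every element of the latter lies in $\{q'_m\}$ and is at least $q_{N_0}\leq q'_{M_0}$, while every element of the former lies in $\{q_n\}$ and is at least $q'_{M_0}\geq q_{N_0}$. Since both sequences are strictly increasing, equality of the sets forces $q'_{M_0+i}=q_{N_0+i}$ for all $i\geq 0$, which is precisely the prefix-equivalence of Definition \ref{Definition: Prefix equivalent sequence}. The only substantive technical wrinkle is the lower bound $q>2^{1/w_d}$ in the converse half of Proposition \ref{Proposition: dynamical formulation of best approximable vectors}: this is what forces the truncation by $N_0$ and explains why one obtains prefix-equivalence rather than equality of the two sequences.
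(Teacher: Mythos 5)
Your proof is correct and follows essentially the same route as the paper's: truncate at the index where the threshold $q>2^{1/w_d}$ kicks in (your $N_0,M_0$ are the paper's $l_0,k_0$), then use the forward direction of Proposition \ref{Proposition: dynamical formulation of best approximable vectors} to get $\{q'_n\}\subseteq\{q_n\}$ and the converse direction to get the reverse containment on the tail. The paper's proof merely says ``applying Proposition \ref{Proposition: dynamical formulation of best approximable vectors} again'' and leaves the set-theoretic bookkeeping implicit; you have filled in exactly that bookkeeping (identifying the two tails as equal subsets of $\mathbb{R}$ and using strict monotonicity to match enumerations), which is a helpful clarification but not a different argument.
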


\begin{proof}
    We may take $k_0$ to be the smallest positive integer such that $t_{k_0}>\log 2/w_d$. Then by Proposition \ref{Proposition: dynamical formulation of best approximable vectors}, we can choose $l_0$ to be the unique integer such that $q'_{k_0}=q_{l_0}$. Applying Proposition \ref{Proposition: dynamical formulation of best approximable vectors} again, we obtain
    \[^t(\boldsymbol{p}'_{k_0+i},q'_{k_0+i})={^t(}\boldsymbol{p}_{l_0+i},q_{l_0+i}),\quad \forall i\geq 0.\]
\end{proof}

\subsection{Cross-section is reasonable}
The argument provided in the following Lemma is a slight improvement of that in the proof of \cite[Lemma 8.4]{Shapira_Weiss_2022_Geometric_and_arithmetic_aspects}.
\begin{lemma}\label{Lemma: the visit times are unbounded}
Let 
\begin{align*}
    &X_h:=\left\{\Lambda\in \mathfrak{X}_{d+1}: \Lambda \cap {^t(}\boldsymbol{0},\mathbb{R})=\{\boldsymbol{0}\}\right\}, \text{ and }\\
    &X_v:=\left\{\Lambda\in \mathfrak{X}_{d+1}: \Lambda \cap {^t(}\mathbb{R}^d,0)=\{\boldsymbol{0}\}\right\}
\end{align*}
Then for any $\Lambda\in X_h$\footnote{Here the subscripts $h$ and $v$ stand for horizontal and vertical components, respectively. For we view the first $d$ rows of a vector in $\mathbb{R}^{d+1}$ as its horizontal component, and the last row as its vertical component.}, the set of visit times $\mathcal{Y}_{\Lambda}$ is unbounded from above; For any $\Lambda\in X_v$, $\mathcal{Y}_{\Lambda}$ is unbounded from below.
\end{lemma}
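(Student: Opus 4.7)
My plan is to exploit Minkowski's first theorem on a one-parameter family of convex symmetric bodies adapted to the flow $\{a_t\}$, and to read off visit times from the primitive lattice vectors it produces. Write $\pi = \pi_{\mathbb{R}^d}$ for brevity. For $\Lambda \in X_h$, fix $T > 0$ and consider the closed convex symmetric body
\[
K_T := \{x \in \mathbb{R}^{d+1} : \norm{\pi(x)}_{\boldsymbol{w}} \leq e^{-T},\ |x_{d+1}| \leq e^T\}.
\]
Its Euclidean volume equals $2^{d+1} e^{-T\sum_i w_i + T} = 2^{d+1}$ since $\sum_i w_i = 1$, so Minkowski's first theorem furnishes a nonzero $v_T \in \Lambda \cap K_T$. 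Let $v_T'$ be the unique primitive vector of $\Lambda$ with $v_T \in \mathbb{N}\, v_T'$; convexity and symmetry of $K_T$ ensure $v_T' \in K_T$ as well, and by replacing $v_T'$ with $-v_T'$ if necessary I can arrange $v'_{T, d+1} \geq 0$.

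The heart of the argument is showing that $\{v'_{T, d+1}\}_{T > 0}$ is unbounded. Suppose, for contradiction, that $v'_{T, d+1} \leq M$ for all sufficiently large $T$. Then the $v_T'$ all lie in the bounded region $\{x \in \mathbb{R}^{d+1} : \norm{\pi(x)}_{\boldsymbol{w}} \leq 1,\ |x_{d+1}| \leq M\}$, and discreteness of $\Lambda$ together with a pigeonhole argument produces a single nonzero $v^* \in \Lambda$ equal to $v_T'$ for infinitely many $T$; sending $T \to \infty$ along this set forces $\pi(v^*) = 0$, so $v^* \in \Lambda \cap {}^t(\boldsymbol{0},\mathbb{R}) \setminus \{\boldsymbol{0}\}$, contradicting $\Lambda \in X_h$. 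Extracting a subsequence $T_k$ with $v'_{T_k, d+1} \to +\infty$ and setting $t_k := \log v'_{T_k, d+1}$, the last coordinate of $a_{t_k} v'_{T_k}$ equals $1$ by the choice of $t_k$, while
\[
\norm{\pi(a_{t_k} v'_{T_k})}_{\boldsymbol{w}} = e^{t_k} \norm{\pi(v'_{T_k})}_{\boldsymbol{w}} = v'_{T_k, d+1}\cdot \norm{\pi(v'_{T_k})}_{\boldsymbol{w}} \leq e^{T_k} \cdot e^{-T_k} = 1,
\]
so $a_{t_k} v'_{T_k} \in D_1$ and hence $t_k \in \mathcal{Y}_\Lambda$, giving the required unboundedness from above.

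For $\Lambda \in X_v$ I would run the dual argument with the stretched box $K'_T := \{x \in \mathbb{R}^{d+1} : \norm{\pi(x)}_{\boldsymbol{w}} \leq e^T,\ |x_{d+1}| \leq e^{-T}\}$, again of volume $2^{d+1}$; here the hypothesis $\Lambda \in X_v$ forces the Minkowski vector to have nonzero last coordinate (otherwise it would be a nonzero point of $\Lambda \cap {}^t(\mathbb{R}^d,0)$), and the primitive multiple already satisfies $|v'_{T, d+1}| \leq e^{-T}$, so after a sign adjustment $t_T := \log v'_{T, d+1} \leq -T$ tends to $-\infty$ automatically, with no need to extract a subsequence. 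The main subtlety in both directions is controlling the passage to the primitive multiple without losing the relevant coordinate bound, and this is precisely what forces the discreteness-plus-pigeonhole step in the $X_h$ case; the $X_v$ case avoids it because the relevant coordinate in $K'_T$ already shrinks with $T$.
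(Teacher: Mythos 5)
Your argument is correct, and it is organized differently from the paper's. The paper argues by contradiction on the boundedness of $\mathcal{Y}_\Lambda$: assuming all visit times lie below $T$, it forms the compact flow-box $C_1^{[-T,0]}=\bigcup_{t\in[-T,0]}a_tC_1$, notes that $\Lambda$ meets it in finitely many nonzero points, pushes all of these out of $C_1$ by a large $a_t$ (possible since $\Lambda\in X_h$ forces nonzero horizontal parts), and then uses Minkowski in $a_t\Lambda\cap C_1$ to produce a vector whose first entrance time into $D_1$ lands in $(0,T]$, contradicting finiteness of $\Lambda\cap C_1^{[-T,0]}$. You instead give a direct construction: you apply Minkowski to the same family of boxes (your $K_T$ and $K'_T$ are exactly $a_{-T}C_1$ and $a_TC_1$), pass to primitive representatives, show via discreteness and pigeonhole that the vertical coordinates must be unbounded when $\Lambda\in X_h$ (otherwise a fixed lattice vector would have vanishing horizontal part, contradicting $X_h$), and then read off explicit visit times $t_k=\log v'_{T_k,d+1}\to+\infty$ by checking membership of $a_{t_k}v'_{T_k}$ in $D_1$ directly. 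Your treatment of $X_v$ is even more economical than the paper's ``similar argument,'' since the vertical coordinate of the Minkowski vector is automatically at most $e^{-T}$, giving visit times $\le -T$ with no extraction step. Two small points worth tightening: you explicitly keep track of primitivity (needed since $\mathcal{S}_1$ is defined via primitive vectors in $D_1$), which is a virtue your write-up already has; and in the pigeonhole step you should note that the chosen vector $v^*$ is attained on an \emph{unbounded} set of $T$ (finitely many candidate vectors cannot each occur only on a bounded set of parameters), so that letting $T\to\infty$ along that set is legitimate — this is immediate but should be said. Overall, your proof trades the paper's compactness-of-$C_1^{[-T,0]}$ finiteness argument for a fixed compact box plus pigeonhole, and yields the visit times constructively rather than by contradiction.
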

\begin{proof}
    To show that $\mathcal{Y}_{\Lambda}$ is unbounded from above for any $\Lambda \in X_h$, we proceed by contradiction. Assume there exists $\Lambda\in X_h$ such that $\mathcal{Y}_{\Lambda}$ has an upper bound $T>0$. Consider the set
\[C_1^{[-T,0]}:=\bigcup_{t\in [-T,0]} a_t C_1.\]
By compactness,
$ C_1^{[-T,0]}\cap \Lambda\setminus\{\boldsymbol{0}\}=\{\boldsymbol{v}_1,\cdots, \boldsymbol{v}_k\}$ is finite ($k$ might be 0, in this case this is an empty set). By definition of $X_h$, any nonzero vector $\boldsymbol{w}\in \Lambda$ satisfies $\pi_{\mathbb{R}^d}(\boldsymbol{w})\neq \boldsymbol{0}$. Thus, we can choose $t\geq T$ large enough so that $a_t \boldsymbol{v}_i \notin C_1$ for all $1\leq i\leq k$. On the other hand, by Minkowski's first theorem, $a_t \Lambda \cap C_1\setminus\{\boldsymbol{0}\}$ is nonempty. Let $\boldsymbol{v}'\in \Lambda\setminus\{\boldsymbol{0}\}$ be such that $a_t\boldsymbol{v}' \in C_1$. Then $\boldsymbol{v}'\neq \boldsymbol{v}_i$ for all $1\leq i\leq k$. In particular, $\boldsymbol{v}'\notin C_1$. Moreover, if we write $\boldsymbol{v}'={^t(}v_1',\cdots,v_{d+1}')$, then $v_{d+1}'\neq 0$. This is because $t>0$, applying $a_t$ to $\boldsymbol{v}'$ will expand $|v_1'|,\cdots,|v_d'|$. If $v_{d+1}'$ were zero, then in order that $a_t \boldsymbol{v}'\in C_1$, it must be that $\boldsymbol{v}'\in C_1$, contradicting the choice of $\boldsymbol{v}'$.

Without loss of generality, we may assume $v_{d+1}'>0$. Then as $\boldsymbol{v}'\notin C_1$, yet $a_t \boldsymbol{v}'\in C_1$, we conclude that the last row of $\boldsymbol{v}'$ is greater than $1$, and so there exists $0< s <t$ such that $a_s \boldsymbol{v}'\in D_1$. By assumption, we also have $s\leq T$, and so $\boldsymbol{v}'\in a_{-s} D_1\subset C_1^{[-T,0]}$. This leads to a contradiction, and finishes the proof of the first assertion.

To show that $\mathcal{Y}_{\Lambda}$ is unbounded from below for $\Lambda\in X_v$, we again assume by contradiction that $\mathcal{Y}_{\Lambda}$ has a lower bound $-T<0$ for some $T>0$ and some $\Lambda\in X_v$. Then we consider the set $C_1^{[0,T]}$. Running a similar argument as we did for elements in $X_h$ would yield a contradiction.
\end{proof}

\begin{lemma}\cite[Lemma 8.4]{Shapira_Weiss_2022_Geometric_and_arithmetic_aspects}
    Let $\mu$ be any $\{a_t\}$-invariant probability measure on $\mathfrak{X}_{d+1}$. Then $\mathcal{S}_{1}$ is a $\mu$-cross-section for $(\mathfrak{X}_{d+1},\mu, \{a_t\})$. Furthermore, the cross-section measure satisfies 
    \[ \mu_{S_1}(\mathfrak{X}_{d+1}(D_1,2))=0.    \]
\end{lemma}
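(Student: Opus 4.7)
The plan is to take $X_0 := X_h \cap X_v$ from the preceding lemma and verify that (a) $X_0$ is $\{a_t\}$-invariant and has full $\mu$-measure, (b) $\mathcal{S}_1 \cap X_0$ is a Borel cross-section for $(X_0, \mathcal{B}_{X_0})$, and (c) any lattice in $\mathfrak{X}_{d+1}(D_1, 2)$ lies outside $X_0$, so has cross-section measure zero.

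For (a), both $X_h$ and $X_v$ are $\{a_t\}$-invariant since $a_t$ preserves both the vertical line ${}^t(0,\mathbb{R})$ and the horizontal hyperplane ${}^t(\mathbb{R}^d, 0)$. To verify $\mu(\mathfrak{X}_{d+1}\setminus X_0)=0$ for an arbitrary $\{a_t\}$-invariant probability $\mu$, I would invoke Poincaré recurrence: for any $\Lambda\notin X_h$, a nonzero vertical vector $v\in\Lambda$ satisfies $a_t v = e^{-t}v\to 0$, so $a_t\Lambda\to\infty$ in $\mathfrak{X}_{d+1}$ by Mahler's criterion and the orbit is not forward-recurrent; symmetrically, every $\Lambda\notin X_v$ has a non-backward-recurrent orbit. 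Since $\mu$-almost every point is both forward- and backward-recurrent, the complement of $X_0$ is $\mu$-null.

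For (b), fix $\Lambda\in X_0$. The previous lemma gives unboundedness of $\mathcal{Y}_\Lambda$ from above and below. For discreteness, observe that for any $T>0$ the set $\bigcup_{t\in[-T,T]} a_{-t}D_1$ is compact in $\mathbb{R}^{d+1}$, hence intersects the discrete lattice $\Lambda$ in finitely many vectors; moreover the equation $a_t v\in D_1$ forces the last coordinate of $a_tv$ to equal $1$, determining $t$ uniquely from $v$, so $\mathcal{Y}_\Lambda\cap[-T,T]$ is finite. Measurability of $\tau_{\mathcal{S}_1}$ on $X_0$ is a routine consequence of the joint measurability of $(t,\Lambda)\mapsto a_t\Lambda$ and the Borel nature of $\mathcal{S}_1$ (the latter being the projection to $\mathfrak{X}_{d+1}$ of the closed set $\{(\Lambda,v):v\in\Lambda_{\mathrm{prim}}\cap D_1\}$). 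This establishes that $\mathcal{S}_1$ is a $\mu$-cross-section.

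For (c), suppose $\Lambda\in X_0$ contains two distinct primitive vectors $v_1,v_2\in D_1$. Both have last coordinate equal to $1$ by the definition of $D_1$, so $v_1-v_2$ is a nonzero lattice vector lying in ${}^t(\mathbb{R}^d,0)$, contradicting $\Lambda\in X_v$. Hence $\mathfrak{X}_{d+1}(D_1,2)\cap X_0 = \emptyset$, so $\mathfrak{X}_{d+1}(D_1,2)^{\mathbb{R}}\subset \mathfrak{X}_{d+1}\setminus X_0$ is $\mu$-null, which by item (iii) of the Ambrose--Kakutani theorem forces $\mu_{\mathcal{S}_1}(\mathfrak{X}_{d+1}(D_1,2))=0$.

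The main subtlety is not in the algebra of step (c), which is clean, but in step (a): one must argue Poincaré recurrence for an arbitrary (possibly non-ergodic, possibly atomic) $\{a_t\}$-invariant probability measure $\mu$, taking care that divergence of orbits outside $X_h$ (respectively $X_v$) in forward (respectively backward) time genuinely precludes positive $\mu$-measure. Everything else — measurability, discreteness, and the final conclusion — is essentially bookkeeping given the preceding lemma.
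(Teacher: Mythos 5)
Your proposal is correct and follows essentially the same route as the paper's proof: both use $X_0 = X_h \cap X_v$, establish full measure via Poincaré recurrence combined with Mahler's criterion, invoke the preceding lemma for unboundedness, get discreteness from a compactness argument, and deduce the last assertion from the fact that $\mathfrak{X}_{d+1}(D_1,2)^{\mathbb{R}} \subset \mathfrak{X}_{d+1}\setminus X_0$. Your version is in fact slightly more careful than the paper's in step (a), since it correctly distinguishes forward-time divergence for $\Lambda\notin X_h$ from backward-time divergence for $\Lambda\notin X_v$, whereas the paper's one-line remark about orbits leaving compact sets as $t\to+\infty$ glosses over the $X_v$ case.
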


\begin{proof}
The proof provided here resembles that in \cite[Lemma 8.4]{Shapira_Weiss_2022_Geometric_and_arithmetic_aspects}. Let 
\[X_0=\left\{\Lambda\in \mathfrak{X}_{d+1}: \Lambda \cap {^t(}\boldsymbol{0},\mathbb{R})=\{\boldsymbol{0}\}, \text{ and }\Lambda \cap {^t(}\mathbb{R}^d,0)=\{\boldsymbol{0}\}\right\}.\]
Note that $X_0=X_h\cap X_v$, where $X_h,X_v$ are as in Lemma \ref{Lemma: the visit times are unbounded}. By \cite[Lemma 8.2]{Shapira_Weiss_2022_Geometric_and_arithmetic_aspects}, $X_0$ is a $G_{\delta}$-set, thus measurable, and $(X_0, \mathcal{B}_{X_0})$ is a standard Borel space, where $\mathcal{B}_{X_0}$ is the Borel $\sigma$-algebra of $X_0$. To show that $\mathcal{S}_1$ is a $\mu$-cross section, we need to verify (i)(ii)(iii) of Definition \ref{Definition: mu cross section}.

For (i), since $\mu(\mathfrak{X}_{d+1})=1$, there exists a sequence of compact sets $\{K_n\}_{n\in \mathbb{N}}$ exhausting $\mathfrak{X}_{d+1}$ with respect to $\mu$. So for any $\epsilon>0$, there exists $N\in \mathbb{N}$ such that for any $n\geq N$, $\mu(K_n)>1-\epsilon$. Suppose $\mu(\mathfrak{X}_{d+1}\setminus X_0)>0$, then there exists $N\in \mathbb{N}$ such that for any $n\geq N$, $\mu(K_n\cap (\mathfrak{X}_{d+1}\setminus X_0))>0$. But for any $\Lambda\in \mathfrak{X}_{d+1}\setminus X_0$, $a_t \Lambda$ will eventually leave every compact set as $t\to +\infty$ by Mahler's criterion. As $\mu$ is $a_t$-invariant, this contradicts Poincar\'e's recurrence theorem. Therefore, $\mu(X_0)=0$.

To verify (ii), we prove a stronger assertion: for any $\Lambda\in\mathfrak{X}_{d+1}$, $\mathcal{Y}_{\Lambda}$ is discrete. First suppose to the contrary that there exists $\Lambda\in X_h\cup X_v$ such that $\mathcal{Y}_{\Lambda}$ is nondiscrete. Let $t_0$ be an accumulation point of $\mathcal{Y}_{\Lambda}$, then there exists a sequence $\{t_n\}_{n\in \mathbb{N}}$ such that $t_n \to t_0$ as $n\to \infty$. Without loss of generality, we may assume that $0<t_n-t_0<1$ for all $n\geq 1$. Consider the compact set
$ D_1^{[-1,0]}=\bigcup_{t\in [-1,0]} a_t D_1$ in $\mathbb{R}^{d+1}$. $D_1^{[-1,0]}\cap a_{t_0}\Lambda$ is a finite subset of $\mathbb{R}^{d+1}$. As for each $n\in \mathbb{N}$, $a_{t_n-t_0}\cdot a_{t_0}\Lambda \in \mathcal{S}_1$, there exists nonzero $\boldsymbol{v}_n\in a_{t_0}\Lambda$ such that $a_{t_n-t_0} \boldsymbol{v}_n\in D_1$. This implies $\boldsymbol{v}_n \in D_1^{[-1,0]}\cap a_{t_0}\Lambda$. Since $\boldsymbol{v}_i\neq \boldsymbol{v}_j$ if $i\neq j$, $ D_1^{[-1,0]}\cap a_{t_0}\Lambda$ is an infinite set, which is a contradiction. Therefore, $\mathcal{Y}_{\Lambda}$ is discrete. 

For $\Lambda\in X_0$, the fact that $\mathcal{Y}_{\Lambda}$ is unbounded from above and below follows from Lemma \ref{Lemma: the visit times are unbounded}, as $X_0=X_h\cap X_v$.

For (iii), we only need to show that for any $\epsilon>0$,
$\mathcal{S}_{\epsilon}=\{\Lambda \in \mathcal{S}_1: \tau(\Lambda)<\epsilon\}$
is measurable. Note that 
\begin{align*}
    \mathcal{S}_{\epsilon}&=\{\Lambda\in \mathcal{S}_1: \Lambda_{prim}\cap D_1^{(-\epsilon,0)}\}\\
    &=\mathfrak{X}_{d+1}(D_1)\cap \mathfrak{X}_{d+1}(D_1^{(-\epsilon,0)})
\end{align*}
is measurable.

For the last assertion, because $\mathfrak{X}_{d+1}(D_1,2)^{\mathbb{R}}\subset \mathfrak{X}_{d+1}\setminus X_0$, $\mu(\mathfrak{X}_{d+1}(D_1,2)^{\mathbb{R}})=0$. Therefore, $\mu_{\mathcal{S}_1}(\mathfrak{X}_{d+1}(D_1,2))=0$.
\end{proof}

From now on till the end of Section \ref{Section: cross sections}, we will let $\mu=\mu_{d+1}$ be the unique $G$-invariant probability measure on $\mathfrak{X}_{d+1}$.

\subsection{Parameterizing $\mathcal{S}_r$ for $r>0$}
We follow the notations in \cite{Shapira_Weiss_2022_Geometric_and_arithmetic_aspects}. Let
\[ H:=\left\{ \begin{pmatrix}
    A & 0\\
    ^t\boldsymbol{h} & 1
\end{pmatrix}: A\in SL_{d}(\mathbb{R}), \boldsymbol{h}\in \mathbb{R}^{d} \right\},\]
and 
\begin{equation}\label{equation: definition of U}
    U:=\left\{ u(\boldsymbol{v})=\begin{pmatrix}
    I_{d} & \boldsymbol{v}\\
    0 & 1
\end{pmatrix}: \boldsymbol{v}\in \mathbb{R}^{d} \right\}.
\end{equation}
For $1\leq i\leq d+1$, we denote by $\boldsymbol{e}_i\in \mathbb{R}^{d+1}$ the column vector with $1$ in the $i$-th row and $0$ otherwise.
Then it is clear that $H\mathbb{Z}^{d+1}$ can be identified with $\mathcal{E}_{d+1}=\mathfrak{X}_{d+1}(\boldsymbol{e}_{d+1})$ of unimodular lattices in $\mathbb{R}^{d+1}$ which contains $\boldsymbol{e}_{d+1}$ as a primitive vector. For $r>0$, let $\overline{B_r^{\boldsymbol{w}}}\subset \mathbb{R}^{d}$ be the closed ball centered at $\boldsymbol{0}\in \mathbb{R}^{d}$ with respect to the (quasi-)norm $\norm{\cdot}_{\boldsymbol{w}}$. Consider the map 
\begin{align*}
    \varphi: \mathcal{E}_{d+1}\times \overline{B^{\boldsymbol{w}}_{r}}\to \mathcal{S}_r\\
    (\Lambda,\boldsymbol{v})\mapsto \varphi(\Lambda,\boldsymbol{v})=u(\boldsymbol{v})\Lambda.
\end{align*}

The map $\boldsymbol{v}\mapsto u(\boldsymbol{v})\boldsymbol{e}_{d+1}$ is a bijection between $\overline{B^{\boldsymbol{w}}_{r}}$ and $D_r$. Thus $\varphi(\mathcal{E}_{d+1}\times \overline{B^{\boldsymbol{w}}_r})=\mathcal{S}_r$. Moreover for any $\Lambda\in \mathcal{S}_r$, 
\[\# \varphi^{-1}(\Lambda)=\#(\Lambda_{prim}\cap D_r).\]
This is because for any $\boldsymbol{v}\in \Lambda_{prim}\cap D_r$,
\[(u(-\pi_{\mathbb{R}^{d}}(\boldsymbol{v}))\Lambda, \pi_{\mathbb{R}^d}(\boldsymbol{v}))\in \varphi^{-1}(\Lambda).\]
Now let $r=1$. Define 
\begin{align*}
    \psi: \mathcal{S}_1^{\#}\to \mathcal{E}_{d+1}\times \overline{B^{\boldsymbol{w}}_1}\\
    \psi(\Lambda)=(u(-\boldsymbol{v}_{\Lambda})\Lambda,\boldsymbol{v}_{\Lambda}),
\end{align*}
where $\boldsymbol{v}_{\Lambda}=\pi_{\mathbb{R}^d}(\boldsymbol{v}(\Lambda))$, and $\{\boldsymbol{v}(\Lambda)\}=\Lambda\cap D_1$.

\begin{lemma}\cite[Lemma 8.5]{Shapira_Weiss_2022_Geometric_and_arithmetic_aspects}
    The set $\mathcal{S}_1$ is closed in $\mathfrak{X}_{d+1}$, $\psi$ is the inverse of $\varphi|_{\varphi^{-1}(\mathcal{S}_1^{\#}) }$, and a homeomophism between $\mathcal{S}_1^{\#}$ and $\varphi^{-1}(\mathcal{S}_1^{\#})$.
\end{lemma}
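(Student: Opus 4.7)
The plan is to break the lemma into three pieces and tackle each in order, leveraging two geometric facts throughout: that $D_1$ is compact (being a bounded closed subset of the affine hyperplane $\{x_{d+1}=1\}$, since $\norm{\cdot}_{\boldsymbol{w}}$ is a proper quasi-norm on $\mathbb{R}^d$), and that any lattice vector with last coordinate equal to $1$ is automatically primitive. For closedness of $\mathcal{S}_1$, given $\Lambda_n \to \Lambda$ in $\mathfrak{X}_{d+1}$ with each $\Lambda_n \in \mathcal{S}_1$, I would pick primitive $\boldsymbol{v}_n \in \Lambda_n \cap D_1$, lift the lattice convergence to representatives $g_n \to g$ in $G$ with $\Lambda_n = g_n \mathbb{Z}^{d+1}$, and write $\boldsymbol{v}_n = g_n \boldsymbol{\ell}_n$ with $\boldsymbol{\ell}_n \in \mathbb{Z}^{d+1}$. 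Compactness of $D_1$ together with $g_n^{-1}\to g^{-1}$ forces the integer vectors $\boldsymbol{\ell}_n$ to be bounded, so they take only finitely many values; extracting a constant subsequence $\boldsymbol{\ell}_n \equiv \boldsymbol{\ell}$ gives $\boldsymbol{v}_n \to g\boldsymbol{\ell} \in \Lambda \cap D_1$, which is primitive by the $x_{d+1}=1$ observation, so $\Lambda \in \mathcal{S}_1$.

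The inverse relationship is then a direct computation. One direction reads $\varphi(\psi(\Lambda)) = u(\boldsymbol{v}_\Lambda) u(-\boldsymbol{v}_\Lambda)\Lambda = \Lambda$. For the converse, take $(\Lambda', \boldsymbol{v}) \in \varphi^{-1}(\mathcal{S}_1^{\#})$; since $\boldsymbol{e}_{d+1}\in \Lambda'$, the vector $u(\boldsymbol{v})\boldsymbol{e}_{d+1}$ has last coordinate $1$ and $\pi_{\mathbb{R}^d}$-projection $\boldsymbol{v}$, so it lies in $u(\boldsymbol{v})\Lambda' \cap D_1$ and is automatically primitive. By uniqueness coming from $u(\boldsymbol{v})\Lambda' \in \mathcal{S}_1^{\#}$, this vector must equal $\boldsymbol{v}(u(\boldsymbol{v})\Lambda')$, and consequently $\boldsymbol{v}_{u(\boldsymbol{v})\Lambda'}=\boldsymbol{v}$, yielding $\psi(u(\boldsymbol{v})\Lambda') = (\Lambda', \boldsymbol{v})$.

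For the homeomorphism claim, continuity of $\varphi$ is immediate from the joint continuity of the $G$-action on $\mathfrak{X}_{d+1}$ together with continuity of $\boldsymbol{v}\mapsto u(\boldsymbol{v})$. Continuity of $\psi$ reduces to showing that the selection map $\Lambda \mapsto \boldsymbol{v}(\Lambda)$ is continuous on $\mathcal{S}_1^{\#}$; composition with the continuous map $\boldsymbol{v}\mapsto u(-\boldsymbol{v})$ handles the rest. Given $\Lambda_n \to \Lambda$ in $\mathcal{S}_1^{\#}$, the same compactness-and-lifting argument shows that every subsequential limit of $\boldsymbol{v}(\Lambda_n)$ lies in $\Lambda \cap D_1$ and is primitive; uniqueness from $\Lambda \in \mathcal{S}_1^{\#}$ forces such a limit to equal $\boldsymbol{v}(\Lambda)$, so the standard ``every subsequence has a sub-subsequence converging to the desired limit'' argument yields $\boldsymbol{v}(\Lambda_n)\to\boldsymbol{v}(\Lambda)$.

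I do not anticipate any serious obstacle: the only nontrivial ingredient is the lattice-convergence/lifting argument turning a bounded sequence of primitive lattice-vectors into a convergent subsequence, which is standard in homogeneous dynamics. The usual subtlety of primitivity possibly failing in the limit is bypassed entirely by the normalization $x_{d+1}=1$ enforcing primitivity for free, so all three claims reduce to routine bookkeeping with the definitions of $\varphi$, $\psi$, and $\mathcal{S}_1^{\#}$.
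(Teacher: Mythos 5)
Your overall structure is sound and, unlike the paper's proof (which simply cites Shapira--Weiss, Lemma 8.1, for closedness of $\mathcal{S}_1$ and continuity of $\psi$, and declares the inverse relations clear), you try to argue everything from scratch. That is fine, but there is a genuine error in the foundational fact you lean on three times: the claim that \emph{any lattice vector with last coordinate equal to $1$ is automatically primitive} is false. Consider the unimodular lattice $\Lambda\subset\mathbb{R}^{2}$ spanned by $(2,0)$ and $(0,1/2)$; the vector $(0,1)=2\cdot(0,1/2)\in\Lambda$ has last coordinate $1$ but is not primitive. Nothing forces a general unimodular lattice to meet the horizontal hyperplane $\{x_{d+1}=0\}$ only in $\mathbb{Z}^{d}\times\{0\}$, so the fiber over $x_{d+1}=1$ need not consist of primitive vectors.

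The good news is that your proof is easily repaired, because the primitivity you need actually falls out of the arguments you already wrote, just for a different reason. In the closedness and $\psi$-continuity arguments, once you have lifted to $g_{n}\to g$ and extracted a constant subsequence $\boldsymbol{\ell}_{n}\equiv\boldsymbol{\ell}\in\mathbb{Z}^{d+1}$, note that each $\boldsymbol{\ell}_{n}=g_{n}^{-1}\boldsymbol{v}_{n}$ is primitive in $\mathbb{Z}^{d+1}$ (because $\boldsymbol{v}_{n}$ is primitive in $\Lambda_{n}=g_{n}\mathbb{Z}^{d+1}$ and primitivity is a $GL$-invariant notion), hence $\boldsymbol{\ell}$ is primitive and so is $g\boldsymbol{\ell}\in\Lambda$. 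In the computation of $\psi\circ\varphi$, the primitivity of $u(\boldsymbol{v})\boldsymbol{e}_{d+1}$ in $u(\boldsymbol{v})\Lambda'$ should be attributed to the hypothesis $\Lambda'\in\mathcal{E}_{d+1}$, i.e.\ to $\boldsymbol{e}_{d+1}$ being primitive in $\Lambda'$, again transported by the invertible linear map $u(\boldsymbol{v})$. With these corrections your argument is complete and self-contained, giving a direct proof of the fact the paper outsources to the Shapira--Weiss reference.
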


\begin{proof}
   We note that by \cite[Lemma 8.1]{Shapira_Weiss_2022_Geometric_and_arithmetic_aspects}, $\mathcal{S}_1$ is closed in $\mathfrak{X}_{d+1}$ and $\psi$ is continuous. It is clear that $\varphi\circ \psi=Id$, and $\psi\circ \varphi|_{Image(\psi)}=Id$. Therefore, the lemma follows.
\end{proof}

\subsection{Measure of cross-section}
From now on until the end of Section \ref{Section: cross sections}, we will focus on the cross-section $\mathcal{S}_1$.

\begin{lemma}\label{Lemma: cross section is Jordan measurable}
For any sufficiently small $\epsilon>0$, the set $\mathcal{S}_{1,<\epsilon}$ is $\mu_{\mathcal{S}_1}$-JM.
\end{lemma}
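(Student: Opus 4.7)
The plan is to identify $\partial_{\mathcal{S}_1}\mathcal{S}_{1,<\epsilon}$ as contained in a finite union of sets, each defined by the existence of a primitive vector on a codimension-one locus in $\mathbb{R}^{d+1}$, and then to combine the preceding vanishing $\mu_{\mathcal{S}_1}(\mathfrak{X}_{d+1}(D_1,2))=0$ with the parameterization $\varphi:\mathcal{E}_{d+1}\times\overline{B_1^{\boldsymbol{w}}}\to\mathcal{S}_1$ to show each such piece has cross-section measure zero.

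First, I would set $W_\epsilon:=\bigcup_{t\in(0,\epsilon)}a_{-t}D_1\subset\mathbb{R}^{d+1}$ and verify that $\mathcal{S}_{1,<\epsilon}=\{\Lambda\in\mathcal{S}_1:\Lambda_{prim}\cap W_\epsilon\neq\emptyset\}$. Lower semicontinuity of $\tau_{\mathcal{S}_1}$ (which follows from closedness of $\mathcal{S}_1$ and Mahler's compactness) together with a standard limit argument on witnessing primitive vectors gives
\[
\partial_{\mathcal{S}_1}\mathcal{S}_{1,<\epsilon}\subset \mathfrak{X}_{d+1}(D_1,2)\cup\bigl(\mathcal{S}_1\cap\mathfrak{X}_{d+1}(T)\bigr)\cup\bigcup_{i_0=1}^{d}\bigl(\mathcal{S}_1\cap\mathfrak{X}_{d+1}(L_{i_0})\bigr),
\]
where $T=\{\boldsymbol{y}\in\mathbb{R}^{d+1}:y_{d+1}=e^\epsilon,\ |y_i|\leq e^{-w_i\epsilon}\ \forall i\}$ is the top face of $\overline{W_\epsilon}\setminus W_\epsilon^{\circ}$, and $L_{i_0}=\{\boldsymbol{y}:y_{d+1}\in[1,e^\epsilon],\ |y_{i_0}|=y_{d+1}^{-w_{i_0}},\ |y_j|\leq y_{d+1}^{-w_j}\ \forall j\}$ is its $i_0$th lateral face. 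The first term is $\mu_{\mathcal{S}_1}$-null by the preceding lemma.

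Second, working in the parameterization $\Lambda=u(\boldsymbol{v})\Lambda_0$ with $(\Lambda_0,\boldsymbol{v})\in \mathcal{E}_{d+1}\times\overline{B_1^{\boldsymbol{w}}}$ and noting $u(\boldsymbol{v})\boldsymbol{y}=(\boldsymbol{y}_h+y_{d+1}\boldsymbol{v},y_{d+1})$, I handle each remaining piece. For $\mathfrak{X}_{d+1}(T)$: the existence of a primitive vector of $\Lambda$ in $T$ forces a primitive $\boldsymbol{y}\in\Lambda_0$ with $y_{d+1}=e^\epsilon$. Writing $\Lambda_0=h\mathbb{Z}^{d+1}$ with $h=\begin{pmatrix}A & 0\\ {^t\boldsymbol{h}} & 1\end{pmatrix}\in H$ and $\boldsymbol{y}=h\boldsymbol{n}$, this becomes $\boldsymbol{h}\cdot\boldsymbol{n}_h+n_{d+1}=e^\epsilon$; since $e^\epsilon\notin\mathbb{Z}$ (for $\epsilon$ small) we must have $\boldsymbol{n}_h\neq\boldsymbol{0}$, so this is a codimension-one affine condition on $\boldsymbol{h}$. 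Summing over $\boldsymbol{n}\in\mathbb{Z}^{d+1}$ produces a Haar-null subset of $H$; transferring this along $\varphi$ and integrating against Lebesgue on $\overline{B_1^{\boldsymbol{w}}}$ shows the corresponding subset of $\mathcal{S}_1$ is $\mu_{\mathcal{S}_1}$-null. For each $\mathfrak{X}_{d+1}(L_{i_0})$: the constraints $y_{d+1}\in[1,e^\epsilon]$ and $|y_{h,j}|\leq y_{d+1}^{-w_j}+y_{d+1}|v_j|\leq 1+e^\epsilon$ confine $\boldsymbol{y}$ to a bounded region of $\mathbb{R}^{d+1}$, so for each fixed $\Lambda_0$ only finitely many primitive $\boldsymbol{y}\in\Lambda_0$ contribute; for each such $\boldsymbol{y}$ the equation $|y_{h,i_0}+y_{d+1}v_{i_0}|=y_{d+1}^{-w_{i_0}}$ cuts out a Lebesgue-null subset of $\overline{B_1^{\boldsymbol{w}}}$, so Fubini yields $\mu_{\mathcal{S}_1}$-measure zero.

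The main obstacle is the first step: carefully establishing the boundary inclusion. A sequence of witnessing primitive vectors $\boldsymbol{u}_n\in(\Lambda_n)_{prim}\cap W_\epsilon$ with $\Lambda_n\to\Lambda$ may converge (along a subsequence, using boundedness of $W_\epsilon$) to a vector $\boldsymbol{u}\in\Lambda$ that is a proper integer multiple $k\boldsymbol{u}'$ of a primitive vector of $\Lambda$. One must show that such pathologies can only occur in $\mathfrak{X}_{d+1}(D_1,2)$, which is precisely where the smallness of $\epsilon$ (say $\epsilon<\log 2$, so that $e^\epsilon<2$ forces $k=1$ whenever $\boldsymbol{u}$ is forced to lie above $D_1$) enters the argument.
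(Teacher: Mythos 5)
Your strategy is the same one the paper relies on by citing \cite[Lemma 8.9]{Shapira_Weiss_2022_Geometric_and_arithmetic_aspects} verbatim: trap $\partial_{\mathcal{S}_1}\mathcal{S}_{1,<\epsilon}$ inside $\mathfrak{X}_{d+1}(D_1,2)$ together with lattices having a primitive vector on codimension-one faces of $W_\epsilon$, and kill each piece using the product description of $\mu_{\mathcal{S}_1}$ (Proposition \ref{Proposition: measure of cross section}; it is stated after this lemma, but its proof is independent, so no circularity) and Fubini. Your nullity arguments for $\mathfrak{X}_{d+1}(T)$ (height exactly $e^\epsilon\notin\mathbb{Z}$ forces an affine codimension-one condition on $\boldsymbol{h}$, independent of $\boldsymbol{v}$) and for $\mathfrak{X}_{d+1}(L_{i_0})$ (finitely many candidate $\boldsymbol{y}$ per $\Lambda_0$, each constraining $v_{i_0}$ to at most two values) are sound.

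The gap is in the boundary-containment step, which you yourself flag, and the fix you propose does not work as stated. If a limit witness were $\boldsymbol{u}=k\boldsymbol{u}'$ with $k\geq 2$, then $\epsilon<\log 2$ only tells you that $\boldsymbol{u}'$ is a lattice vector with last coordinate in $(0,1)$ and small horizontal part; membership in $\mathcal{S}_1$ (unlike membership in $\mathcal{B}$) does not forbid such vectors, so no contradiction follows and you cannot conclude $\Lambda\in\mathfrak{X}_{d+1}(D_1,2)$ this way. The correct repair is that convergence $\Lambda_n\to\Lambda$ in $\mathfrak{X}_{d+1}$ means $\Lambda_n=g_n\Lambda$ with $g_n\to e$; since the witnesses lie in the bounded set $\overline{W_\epsilon}$ and $\Lambda$ is discrete, along a subsequence they equal $g_n\boldsymbol{u}$ for a single $\boldsymbol{u}\in\Lambda_{prim}$, so the limit is automatically primitive and the multiple pathology never occurs. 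Separately, your inclusion silently omits the bottom face: for $\Lambda\in\mathcal{S}_{1,\geq\epsilon}$ approximated from the $<\epsilon$ side, the limit witness lies in $\overline{W_\epsilon}\setminus W_\epsilon$, which is the union of the top face $T$ and the bottom face $D_1$ (the lateral faces are already contained in $W_\epsilon$ because $D_1$ is closed, and are excluded by $\tau_{\mathcal{S}_1}(\Lambda)\geq\epsilon$; they matter instead for boundary points lying in $\mathcal{S}_{1,<\epsilon}$, where every witness must sit on some $L_{i_0}$). In the bottom-face case you still need a \emph{second} primitive vector of $\Lambda$ in $D_1$ to land in $\mathfrak{X}_{d+1}(D_1,2)$: track also the vectors witnessing $\Lambda_n\in\mathcal{S}_1$, which along a subsequence are $g_n\boldsymbol{w}$ for a fixed $\boldsymbol{w}\in\Lambda_{prim}$ with $\boldsymbol{w}\in D_1$ in the limit, and note $\boldsymbol{w}\neq\boldsymbol{u}$ because $g_n\boldsymbol{u}$ has last coordinate $>1$ while $g_n\boldsymbol{w}$ has last coordinate $=1$. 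With these two repairs (and dropping the unneeded appeal to lower semicontinuity of $\tau_{\mathcal{S}_1}$, which is not true near points where nearby return times degenerate), your argument is complete.
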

\begin{proof}
    The proof goes verbatim the same as that of \cite[Lemma 8.9]{Shapira_Weiss_2022_Geometric_and_arithmetic_aspects}.
\end{proof}
Now let $\mathcal{U}_1:=\mathfrak{X}_{d+1}^{\#}(D_1)\cap \mathfrak{X}_{d+1}(D^0_1)$.
\begin{lemma}\label{Lemma: a large open set in the section}
    The set $\mathcal{U}_1$ is open in $\mathcal{S}_1$, the set $(cl_{\mathfrak{X}_{d+1}}(\mathcal{S}_1)\setminus \mathcal{U}_1)^{(0,1)}$ is $\mu$-null, and the map $(t,\Lambda)\mapsto a_t\Lambda$ from $(0,1)\times \mathcal{U}_1$ is open.
\end{lemma}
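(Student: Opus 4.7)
The plan is to verify the three assertions separately, exploiting the parameterization $\psi \colon \mathcal{S}_1^{\#} \to \mathcal{E}_{d+1} \times \overline{B_1^{\boldsymbol{w}}}$, the preceding null-set result $\mu_{\mathcal{S}_1}(\mathfrak{X}_{d+1}(D_1,2))=0$, Siegel's mean value formula, and transversality of $\{a_t\}$ to the cross-section. For openness of $\mathcal{U}_1$ (in fact as a subset of $\mathfrak{X}_{d+1}$, a fortiori in $\mathcal{S}_1$), I would fix $\Lambda_0 \in \mathcal{U}_1$ with its unique primitive vector $\boldsymbol{v}_0 \in D_1^0$ and choose a compact neighborhood $K \supset D_1$ in $\mathbb{R}^{d+1}$. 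The finitely many points of $\Lambda_0 \cap K$ vary continuously with the lattice in the Chabauty topology; since $\boldsymbol{v}_0$ lies in the open set $D_1^0$ while the other points of $\Lambda_0 \cap K$ lie outside the closed set $D_1$, any sufficiently close lattice $\Lambda$ still has exactly one primitive vector in $D_1$, lying in $D_1^0$, so $\Lambda \in \mathcal{U}_1$.

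For the $\mu$-null claim, I would use that $\mathcal{S}_1$ is closed in $\mathfrak{X}_{d+1}$ by the preceding lemma, so $cl_{\mathfrak{X}_{d+1}}(\mathcal{S}_1)=\mathcal{S}_1$, and decompose
\[
\mathcal{S}_1 \setminus \mathcal{U}_1 = E_1 \cup E_2, \qquad E_1 := \mathcal{S}_1 \cap \mathfrak{X}_{d+1}(D_1, 2), \quad E_2 := \mathcal{S}_1 \setminus \mathfrak{X}_{d+1}(D_1^0).
\]
The assumption $\mu_{\mathcal{S}_1}(\mathfrak{X}_{d+1}(D_1,2))=0$ together with property (iii) of the cross-section measure theorem gives $\mu(E_1^{\mathbb{R}})=0$, hence $\mu(E_1^{(0,1)})=0$. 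For $E_2$: each $\Lambda \in E_2$ has its primitive vectors in $D_1$ confined to $\partial D_1$, so $E_2^{(0,1)}$ is contained in the set of lattices meeting $F := \bigcup_{t \in (0,1)} a_t \partial D_1 \subset \mathbb{R}^{d+1}$ nontrivially. Parameterizing $\partial D_1$ as $\{(\boldsymbol{x},1) : \|\boldsymbol{x}\|_{\boldsymbol{w}}=1\}$ and applying $a_t$ displays $F$ as a smooth $d$-dimensional submanifold of $\mathbb{R}^{d+1}$, hence Lebesgue-null; Siegel's mean value formula then yields that the lattice set in question has $\mu$-measure zero.

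For openness of the flow map, I would identify $\mathcal{U}_1$ locally, via $\psi$ and a local section of the covering $H \to \mathcal{E}_{d+1}$, with an open subset of $H \times B_1^{\boldsymbol{w}}$, reducing to showing that $F(t,g,\boldsymbol{v}) := a_t u(\boldsymbol{v}) g$ is a local diffeomorphism from $(0,1) \times H \times B_1^{\boldsymbol{w}}$ into $G$ (the further composition with the local homeomorphism $G \to \mathfrak{X}_{d+1}$ preserves openness). Since source and target have equal dimension $d^2 + 2d$, the inverse function theorem reduces the question to the Lie-algebra decomposition $\mathfrak{g} = \mathbb{R} Y_a \oplus \mathfrak{h} \oplus \mathfrak{u}$, where $Y_a = \diag(w_1, \ldots, w_d, -1)$ generates $\{a_t\}$ and $\mathfrak{h}, \mathfrak{u}$ are the Lie algebras of $H$ and $U$. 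This decomposition is immediate: only $Y_a$ contributes to the $(d+1,d+1)$-entry, and within $\mathfrak{h} \oplus \mathfrak{u}$ the supports of nonzero matrix entries are disjoint. The main technical obstacle I anticipate is the null-set argument for $E_2$, requiring both the explicit parameterization establishing that $F$ is Lebesgue-null and the right form of Siegel's mean value formula; the other two parts follow quickly from the existing parameterization $\psi$ and elementary linear algebra.
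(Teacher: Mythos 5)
The paper itself gives no independent proof here---it simply states that the proof of \cite[Lemma 8.10]{Shapira_Weiss_2022_Geometric_and_arithmetic_aspects} adapts verbatim. Your three-part plan (Chabauty continuity for openness, the decomposition $E_1\cup E_2$ with Siegel's formula for the null set, and the Lie-algebra splitting $\mathfrak{g}=\mathbb{R}Y_a\oplus\mathfrak{h}\oplus\mathfrak{u}$ for the flow map) is exactly the strategy that proof uses, and your treatments of the second and third assertions are sound, so the overall route is essentially the paper's.

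There is, however, a genuine flaw in your treatment of the first assertion. You claim parenthetically that $\mathcal{U}_1$ is open in $\mathfrak{X}_{d+1}$, and your argument treats $D_1^0$ as an open subset of $\mathbb{R}^{d+1}$. Both are false: $D_1$ sits inside the affine hyperplane $\{x_{d+1}=1\}$, so $D_1^0$ (which here means the interior relative to that hyperplane, namely $\{(\boldsymbol{x},1):\|\boldsymbol{x}\|_{\boldsymbol{w}}<1\}$) has empty interior in $\mathbb{R}^{d+1}$, and consequently $\mathcal{U}_1\subset\mathcal{S}_1$ has empty interior in $\mathfrak{X}_{d+1}$. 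Concretely, perturbing $\Lambda_0\in\mathcal{U}_1$ by a small $a_s$ with $s\neq 0$ moves $\boldsymbol{v}_0$ off the hyperplane $\{x_{d+1}=1\}$ entirely, so $a_s\Lambda_0\notin\mathcal{S}_1$, let alone $\mathcal{U}_1$. Your argument as written concludes ``any sufficiently close lattice $\Lambda$ \ldots\ $\Lambda\in\mathcal{U}_1$,'' which cannot hold.

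The fix is straightforward but must be made: one should only consider $\Lambda\in\mathcal{S}_1$ near $\Lambda_0$. Such $\Lambda$ automatically has a primitive vector in $D_1$. Chabauty continuity (plus the fact that the other points of $\Lambda_0\cap K$ lie in the $\mathbb{R}^{d+1}$-open set $K\setminus D_1$) forces that vector to be a small perturbation of $\boldsymbol{v}_0$; since it has last coordinate exactly $1$ and $\pi_{\mathbb{R}^d}(\boldsymbol{v}_0)$ lies in the $\mathbb{R}^d$-open ball $\{\|\boldsymbol{x}\|_{\boldsymbol{w}}<1\}$, it lies in $D_1^0$, and uniqueness follows likewise. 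This gives openness in $\mathcal{S}_1$, which is all the lemma asserts---no more can be true.

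Two smaller remarks. In the null-set argument you should note explicitly that $D_1\setminus D_1^0$ (the boundary of the cube $[-1,1]^d$ at height $1$) is $(d-1)$-dimensional so $F$ is $d$-dimensional and Lebesgue-null in $\mathbb{R}^{d+1}$; with the quasi-norm this boundary is only Lipschitz, not smooth, at the corners, but that does not affect the measure-zero conclusion. In the flow-map argument, the local-diffeomorphism claim should be checked at a basepoint and then propagated to all of $(0,1)\times H\times B_1^{\boldsymbol{w}}$ by left/right translation, which is routine but worth a sentence.
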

\begin{proof}
    The proof of \cite[Lemma 8.10]{Shapira_Weiss_2022_Geometric_and_arithmetic_aspects} can be adapted verbatim to the proof of the above lemma.
\end{proof}

\begin{proposition}\label{Proposition: measure of cross section}
    $\mu_{\mathcal{S}_1}$ is finite and $supp(\mu_{\mathcal{S}_1})=\mathcal{S}_1$. Indeed, 
    \[\mu_{\mathcal{S}_1}=\frac{1}{\zeta(d+1)}\cdot \varphi_*(\mu_{\mathcal{E}_{d+1}}\times Leb_{\overline{B}_1^{\boldsymbol{w}}}),\]
    where $\mu_{\mathcal{E}_{d+1}}$ is the Haar measure on $\mathcal{E}_{d+1}$, and $\zeta$ is the Riemann zeta function. In particular, as for any other weight vector $\boldsymbol{w}'$, $B_1^{\boldsymbol{w}'}=B_1^{\boldsymbol{w}}$, $\mu_{\mathcal{S}_1}$ does not depends on $\boldsymbol{w}$.
\end{proposition}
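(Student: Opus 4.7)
The plan is to establish the density formula for a generating family of subsets of $\mathcal{S}_1$ by combining the Ambrose--Kakutani disintegration formula $\mu_{\mathcal{S}_1}(E) = \lim_{\epsilon \to 0^+}\epsilon^{-1}\mu(E^{(0,\epsilon)})$ with the (refined) Siegel--Veech integration formula. By \ref{Lemma: cross section is Jordan measurable} and standard approximation arguments, it is enough to verify the desired identity for tempered, $\mu_{\mathcal{S}_1}$-JM sets $E \subset \mathcal{S}_1^{\#}$ whose return time is bounded below by some $\delta_0 > 0$.

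Fix such an $E$ and $0 < \epsilon < \delta_0$. By the cross-section property, the map $(t,\Lambda) \mapsto a_t\Lambda$ is a Borel bijection from $(0,\epsilon) \times E$ onto $E^{(0,\epsilon)}$. The next step is to re-express the indicator of $E^{(0,\epsilon)}$ as a sum over primitive vectors. More precisely, $\Lambda' \in E^{(0,\epsilon)}$ if and only if there is a (necessarily unique) primitive vector $\boldsymbol{v}' \in \Lambda'_{\mathrm{prim}}$ with $a_{-t}\boldsymbol{v}' \in D_1$ for $t := -\log v'_{d+1} \in (0,\epsilon)$, such that the pair $\psi(a_{-t}\Lambda') = (u(-\boldsymbol{v}_0)\,a_{-t}\Lambda',\,\boldsymbol{v}_0)$ belongs to $\psi(E)$, where $\boldsymbol{v}_0 := \pi_{\mathbb{R}^d}(a_{-t}\boldsymbol{v}')$. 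Accordingly,
\begin{equation*}
\chi_{E^{(0,\epsilon)}}(\Lambda') \;=\; \sum_{\boldsymbol{v}' \in \Lambda'_{\mathrm{prim}}} \chi_{a_{(0,\epsilon)} D_1}(\boldsymbol{v}')\cdot \chi_{\psi(E)}\bigl(u(-\boldsymbol{v}_0)\,a_{-t}\Lambda',\,\boldsymbol{v}_0\bigr).
\end{equation*}

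Integrating against $\mu$, I invoke the refined Siegel--Veech formula: for any measurable section $s:\mathbb{R}^{d+1}\setminus\{0\} \to G$ with $s(\boldsymbol{v})\boldsymbol{e}_{d+1}=\boldsymbol{v}$ and any nonnegative measurable $F$,
\begin{equation*}
\int_{\mathfrak{X}_{d+1}} \sum_{\boldsymbol{v}\in\Lambda_{\mathrm{prim}}} F\bigl(\boldsymbol{v},\, s(\boldsymbol{v})^{-1}\Lambda\bigr)\, d\mu(\Lambda) \;=\; \frac{1}{\zeta(d+1)} \int_{\mathbb{R}^{d+1}}\int_{\mathcal{E}_{d+1}} F(\boldsymbol{v},\Lambda_0)\, d\mu_{\mathcal{E}_{d+1}}(\Lambda_0)\, d\boldsymbol{v},
\end{equation*}
obtained by unfolding the $\Gamma$-orbit $\Gamma\boldsymbol{e}_{d+1} = \mathbb{Z}^{d+1}_{\mathrm{prim}}$ via the fibration $G/(H\cap\Gamma) \to \mathbb{R}^{d+1}\setminus\{0\}$, with $\mu_{\mathcal{E}_{d+1}}$ normalized so that the Siegel--Veech constant is $1/\zeta(d+1)$. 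I apply this with the natural section $s(a_t{}^t(\boldsymbol{v}_0,1)) := a_t u(\boldsymbol{v}_0)$, so that $s(\boldsymbol{v})^{-1}\Lambda' = u(-\boldsymbol{v}_0)\,a_{-t}\Lambda'$. A direct calculation shows that the Jacobian of the change of variables $(t,\boldsymbol{v}_0) \mapsto a_t{}^t(\boldsymbol{v}_0,1)$ equals $1$, thanks to $w_1+\cdots+w_d=1$. Combining these ingredients,
\begin{equation*}
\mu(E^{(0,\epsilon)}) \;=\; \frac{1}{\zeta(d+1)}\int_0^\epsilon\!\! \int_{\overline{B_1^{\boldsymbol{w}}}}\!\! \int_{\mathcal{E}_{d+1}} \chi_{\psi(E)}(\Lambda_0,\boldsymbol{v}_0)\, d\mu_{\mathcal{E}_{d+1}}(\Lambda_0)\, d\boldsymbol{v}_0\, dt \;=\; \frac{\epsilon}{\zeta(d+1)}(\mu_{\mathcal{E}_{d+1}}\times\mathrm{Leb})(\psi(E)).
\end{equation*}
Dividing by $\epsilon$ and letting $\epsilon \to 0^+$ yields the desired identity on this family of sets, which by approximation extends to all of $\mathcal{B}_{\mathcal{S}_1}$.

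The remaining assertions follow immediately. Finiteness of $\mu_{\mathcal{S}_1}$ reduces to $\mu_{\mathcal{E}_{d+1}}(\mathcal{E}_{d+1})<\infty$, which holds because $H/(H\cap\Gamma) \cong SL_d(\mathbb{R})/SL_d(\mathbb{Z}) \ltimes \mathbb{T}^d$ has finite Haar volume, together with $\mathrm{Leb}(\overline{B_1^{\boldsymbol{w}}}) = 2^d<\infty$. The support statement is immediate from the density formula, since both factor measures have full support and $\varphi$ is surjective onto $\mathcal{S}_1$. The $\boldsymbol{w}$-independence stems from the elementary observation that $\overline{B_1^{\boldsymbol{w}}} = [-1,1]^d$ for every weight vector $\boldsymbol{w}$, which already appears in the statement. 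The main technical point of the argument is the correct formulation and application of the refined Siegel--Veech formula with a section: once the natural section $s(\boldsymbol{v}) = a_t u(\boldsymbol{v}_0)$ is chosen, the unit Jacobian provided by $\sum w_i = 1$ makes the rest of the calculation mechanical.
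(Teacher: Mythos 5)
Your argument is correct and takes essentially the same route as the paper: the paper's proof just records the decomposition $G=QH$ with $Q=\{a_t\}\ltimes U$ and defers to Shapira--Weiss (Proposition 8.7), and your ``refined Siegel--Veech formula with a section'' is precisely that disintegration of Haar measure along the orbit $G\boldsymbol{e}_{d+1}$ with stabilizer $H$, combined with the same thickening $E^{(0,\epsilon)}$, the unit Jacobian coming from $\sum_i w_i=1$, and the constant $1/\zeta(d+1)$ coming from primitivity. The points you leave as ``standard approximation'' (restricting to $E\subset\mathcal{S}_1^{\#}$ with return time bounded below, i.e.\ discarding $\mathfrak{X}_{d+1}(D_1,2)$ for both measures and exhausting by $\mathcal{S}_{1,\geq 1/n}$) are harmless and are supported by the paper's earlier lemmas on the cross-section.
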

\begin{proof}
    Let $Q=\{a_t:t\in \mathbb{R}\}\ltimes U$. Note that $G=Q\times H$ and $Q\cap H=\{e\}$. Then the proof goes verbatim as that of \cite[Proposition 8.7]{Shapira_Weiss_2022_Geometric_and_arithmetic_aspects}.
\end{proof}

\begin{theorem}\label{Theorem: cross section is reasonable}
    The cross section $\mathcal{S}_1$ is $\mu$-reasonable.
\end{theorem}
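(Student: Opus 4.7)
The plan is to observe that all of the hard work has been done by the preceding lemmas and proposition, so the proof will consist of matching each ingredient in Definition \ref{Definition: resonable section} to the corresponding previously established fact. First I would check the preliminary hypotheses built into the definition: $\mathcal{S}_1$ is closed in $\mathfrak{X}_{d+1}$ by the lemma following the parametrization, and since $\mathfrak{X}_{d+1}$ is locally compact second countable, so is $\mathcal{S}_1$ in the subspace topology. The finiteness of $\mu_{\mathcal{S}_1}$ is given directly by Proposition \ref{Proposition: measure of cross section}.

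Next, to verify condition (1), I would note that $\mathcal{S}_{1,\geq \epsilon}$ is the complement of $\mathcal{S}_{1,<\epsilon}$ inside $\mathcal{S}_1$, so the two sets share the same topological boundary $\partial_{\mathcal{S}_1}(\mathcal{S}_{1,\geq\epsilon}) = \partial_{\mathcal{S}_1}(\mathcal{S}_{1,<\epsilon})$. By Lemma \ref{Lemma: cross section is Jordan measurable}, this boundary is $\mu_{\mathcal{S}_1}$-null for all sufficiently small $\epsilon>0$, hence $\mathcal{S}_{1,\geq\epsilon}$ is $\mu_{\mathcal{S}_1}$-JM.

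For condition (2), I would take $\mathcal{U} = \mathcal{U}_1 = \mathfrak{X}_{d+1}^{\#}(D_1)\cap \mathfrak{X}_{d+1}(D_1^0)$. Lemma \ref{Lemma: a large open set in the section} directly supplies all three required properties in one shot: $\mathcal{U}_1$ is open in $\mathcal{S}_1$, the map $(t,\Lambda)\mapsto a_t\Lambda$ from $(0,1)\times \mathcal{U}_1$ into $\mathfrak{X}_{d+1}$ is open, and $\mu\bigl((cl_{\mathfrak{X}_{d+1}}(\mathcal{S}_1)\setminus \mathcal{U}_1)^{(0,1)}\bigr) = 0$.

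Since every clause of Definition \ref{Definition: resonable section} is now accounted for, the theorem follows. There is no real obstacle here, the statement is essentially a bookkeeping corollary; the substantive content lives in Lemmas \ref{Lemma: cross section is Jordan measurable} and \ref{Lemma: a large open set in the section} and in Proposition \ref{Proposition: measure of cross section}, which in turn follow the arguments of \cite{Shapira_Weiss_2022_Geometric_and_arithmetic_aspects} essentially verbatim, with the Euclidean disk of the equal-weights case replaced by the $\boldsymbol{w}$-quasi-norm disk $\overline{B_1^{\boldsymbol{w}}}$ (which, crucially, coincides with $\overline{B_1^{\boldsymbol{w}'}}$ as a set for any weight vector $\boldsymbol{w}'$, so the underlying measure-theoretic identifications go through without change).
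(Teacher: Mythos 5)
Your proposal is correct and takes essentially the same route as the paper, which also proves the theorem by simply matching Lemma \ref{Lemma: cross section is Jordan measurable}, Lemma \ref{Lemma: a large open set in the section} and Proposition \ref{Proposition: measure of cross section} against the clauses of Definition \ref{Definition: resonable section}. The extra bookkeeping you supply---that $\mathcal{S}_{1,\geq\epsilon}$ and $\mathcal{S}_{1,<\epsilon}$ share a topological boundary, and that $\mathcal{S}_1$ is lcsc because it is a closed subset of the lcsc space $\mathfrak{X}_{d+1}$---is accurate and makes explicit what the paper leaves implicit.
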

\begin{proof}
    Recalling the definition of reasonable cross section as in Definition \ref{Definition: resonable section}, the theorem follows by Lemma \ref{Lemma: cross section is Jordan measurable}, \ref{Lemma: a large open set in the section} and Proposition \ref{Proposition: measure of cross section}.
\end{proof}

\subsection{The set $\mathcal{B}$ is tempered}

\begin{lemma}\label{Lemma: boundary of best approximation set}\cite[Lemma 9.1]{Shapira_Weiss_2022_Geometric_and_arithmetic_aspects}
    The set $\mathcal{B}$ is open in $\mathcal{S}_1$. The boundary $\partial_{\mathcal{S}_1}\mathcal{B}$ is contained in $\mathfrak{X}_{d+1}(D_1,2)\cup \mathcal{Z}$, where 
    \[\mathcal{Z}=\{\Lambda\in \mathfrak{X}_{d+1}:\exists \boldsymbol{v},\boldsymbol{w}\in \Lambda_{prim} \text{ such that }\boldsymbol{v}\neq \pm \boldsymbol{w}, \norm{\pi_{\mathbb{R}^d}(\boldsymbol{v})}_{\boldsymbol{w}}=\norm{\pi_{\mathbb{R}^d}(\boldsymbol{w})}_{\boldsymbol{w}}\}.\]
\end{lemma}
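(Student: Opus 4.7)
The plan is to use a single limiting principle throughout: if $\Lambda_n \to \Lambda_0$ in $\mathfrak{X}_{d+1}$ and $w_n \in (\Lambda_n)_{prim}$ converges in $\mathbb{R}^{d+1}$ to some $w_\infty \neq 0$, then $w_\infty \in (\Lambda_0)_{prim}$. To see this, write $\Lambda_n = g_n \mathbb{Z}^{d+1}$ with $g_n \to g_0$ in $G$; the integer vectors $g_n^{-1} w_n$ then converge to $g_0^{-1} w_\infty \neq 0$ and so are eventually equal to some primitive $m \in \mathbb{Z}^{d+1}$, yielding $w_\infty = g_0 m$.

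For the openness of $\mathcal{B}$, I will take $\Lambda_n \to \Lambda_0 \in \mathcal{B}$ in $\mathcal{S}_1$ and show $\Lambda_n \in \mathcal{B}$ eventually in two stages. First, $\Lambda_n \in \mathcal{S}_1^{\#}$ for large $n$: otherwise, after passing to a subsequence, $\Lambda_n$ has two primitive vectors $v_n \neq \pm v_n'$ in $D_1$, and compactness of $D_1$ together with the principle above produces two primitive vectors $\tilde v, \tilde w \in \Lambda_0 \cap D_1$; the relation $(\tilde v)_{d+1} = 1 = (\tilde w)_{d+1}$ forbids $\tilde w = -\tilde v$, while $\tilde w = \tilde v$ forces $v_n = v_n'$ eventually, contradicting $\Lambda_0 \in \mathcal{S}_1^{\#}$. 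With $\Lambda_n \in \mathcal{S}_1^{\#}$ in hand, continuity of $\psi$ gives $v(\Lambda_n) \to v_0 := v(\Lambda_0)$ and $r(\Lambda_n) \to r_0 := r(\Lambda_0)$. If $\Lambda_n \notin \mathcal{B}$ along a further subsequence, I would pick $w_n \in (\Lambda_n)_{prim} \cap C_{r(\Lambda_n)}$ with $w_n \neq \pm v(\Lambda_n)$; boundedness of $C_1$ and the positivity of the systole at $\Lambda_0$ produce a nonzero limit $w_\infty \in (\Lambda_0)_{prim} \cap C_{r_0}$ with $w_\infty \neq \pm v_0$, contradicting $\Lambda_0 \in \mathcal{B}$.

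For the boundary inclusion, let $\Lambda_0 \in \partial_{\mathcal{S}_1} \mathcal{B}$, so that $\Lambda_0 \notin \mathcal{B}$ while some $\Lambda_n \in \mathcal{B}$ converge to it. Assuming $\Lambda_0 \notin \mathfrak{X}_{d+1}(D_1, 2)$, equivalently $\Lambda_0 \in \mathcal{S}_1^{\#}$ with well defined $v_0, r_0$, the failure $\Lambda_0 \notin \mathcal{B}$ yields $w_0 \in (\Lambda_0)_{prim} \cap C_{r_0}$ with $w_0 \neq \pm v_0$. The core step is to rule out the strict inequality $\norm{\pi_{\mathbb{R}^d}(w_0)}_{\boldsymbol{w}} < r_0$ by splitting on $|(w_0)_{d+1}|$: if $|(w_0)_{d+1}| < 1$ then $w_0$ lies in the interior of $C_{r_0}$, so the primitive vector $w_n \in (\Lambda_n)_{prim}$ supplied by the principle above eventually satisfies $w_n \in C_{r(\Lambda_n)}$ and $w_n \neq \pm v(\Lambda_n)$, contradicting $\Lambda_n \in \mathcal{B}$; while if $|(w_0)_{d+1}| = 1$ then, after possibly replacing $w_0$ by $-w_0$, $w_0 \in D_1$ is a primitive vector of $\Lambda_0$ different from $\pm v_0$, contradicting $\Lambda_0 \in \mathcal{S}_1^{\#}$. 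Hence $\norm{\pi_{\mathbb{R}^d}(w_0)}_{\boldsymbol{w}} = r_0 = \norm{\pi_{\mathbb{R}^d}(v_0)}_{\boldsymbol{w}}$ with $v_0, w_0 \in (\Lambda_0)_{prim}$ and $w_0 \neq \pm v_0$, placing $\Lambda_0 \in \mathcal{Z}$.

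The main technical obstacle is the subcase analysis in the boundary step: one must carefully separate which defining inequalities of $C_{r_0}$ are strict and which are equalities, so as to route every non-$\mathcal{Z}$ configuration into a contradiction with either $\Lambda_0 \in \mathcal{S}_1^{\#}$ or $\Lambda_n \in \mathcal{B}$. The remainder of the argument reduces to routine continuity, compactness and the primitivity transfer principle above.
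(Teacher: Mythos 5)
Your argument is correct, and it is the natural one that the paper (which cites the Shapira--Weiss proof verbatim) has in mind: write converging lattices as $g_n\mathbb{Z}^{d+1}$ with $g_n\to g_0$, observe that bounded sequences of primitive vectors pull back to eventually constant primitive integer vectors, and push this through the two defining conditions of $\mathcal{B}$. All the case splits check out -- in particular the observation that $D_1$ has last coordinate exactly $1$ (so two distinct primitive vectors there are automatically non-antipodal), that $C_1\subset[-1,1]^{d+1}$ is compact, and that the only non-contradictory boundary configuration forces $\norm{\pi_{\mathbb{R}^d}(w_0)}_{\boldsymbol{w}}=r_0=\norm{\pi_{\mathbb{R}^d}(v_0)}_{\boldsymbol{w}}$, i.e. $\Lambda_0\in\mathcal{Z}$.
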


\begin{proof}
The proof is verbatim the same as that of Lemma 9.1 in \cite{Shapira_Weiss_2022_Geometric_and_arithmetic_aspects}.
\end{proof}

\begin{lemma}\label{Lemma: B is JM}\cite[Lemma 9.2]{Shapira_Weiss_2022_Geometric_and_arithmetic_aspects}
    The set $\mathcal{B}$ is $\mu_{S_1}$-JM and $\mu_{S_1}(\mathcal{B})>0$.
\end{lemma}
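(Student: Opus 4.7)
The plan is to split the lemma into a Jordan-measurability statement and a positivity statement, each handled by combining Lemma~\ref{Lemma: boundary of best approximation set} with Proposition~\ref{Proposition: measure of cross section}.

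For Jordan measurability, Lemma~\ref{Lemma: boundary of best approximation set} gives $\partial_{\mathcal{S}_1}\mathcal{B}\subset \mathfrak{X}_{d+1}(D_1,2)\cup \mathcal{Z}$, and $\mu_{\mathcal{S}_1}(\mathfrak{X}_{d+1}(D_1,2))=0$ was established when $\mathcal{S}_1$ was shown to be a cross-section; so I would reduce to showing $\mu_{\mathcal{S}_1}(\mathcal{Z}\cap\mathcal{S}_1)=0$. A short computation using \eqref{equation: definition of at from w} yields the scaling identity $\|\pi_{\mathbb{R}^d}(a_t\boldsymbol{v})\|_{\boldsymbol{w}}=e^t\|\pi_{\mathbb{R}^d}(\boldsymbol{v})\|_{\boldsymbol{w}}$ for every $\boldsymbol{v}\in\mathbb{R}^{d+1}$, which makes $\mathcal{Z}$ manifestly $\{a_t\}$-invariant. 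Property~(iii) of the cross-section measure then reduces the task to proving $\mu(\mathcal{Z})=0$.

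To prove $\mu(\mathcal{Z})=0$, I would decompose
\[
\mathcal{Z}\subset \bigcup_{\substack{\boldsymbol{z}_1,\boldsymbol{z}_2\in\mathbb{Z}^{d+1}\\ \boldsymbol{z}_1\neq\pm\boldsymbol{z}_2}}\pi\bigl(Z_{\boldsymbol{z}_1,\boldsymbol{z}_2}\bigr),
\]
where $\pi\colon G\to\mathfrak{X}_{d+1}$ is the quotient map and $Z_{\boldsymbol{z}_1,\boldsymbol{z}_2}=\{g\in G:\|\pi_{\mathbb{R}^d}(g\boldsymbol{z}_1)\|_{\boldsymbol{w}}=\|\pi_{\mathbb{R}^d}(g\boldsymbol{z}_2)\|_{\boldsymbol{w}}\}$. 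Stratifying $G$ by which coordinate indices realize the maxima on either side, each $Z_{\boldsymbol{z}_1,\boldsymbol{z}_2}$ becomes a finite union of real-analytic loci of the form $|f(g)|^{w_{j'}}=|f'(g)|^{w_j}$ with $f,f'$ linear in the entries of $g$. A choice of $g$ sending $\boldsymbol{z}_1$ to $\boldsymbol{e}_{d+1}$ while arranging for $g\boldsymbol{z}_2$ to retain a nonzero horizontal projection --- possible since $\boldsymbol{z}_2\neq\pm\boldsymbol{z}_1$ --- will show that none of these equations is trivially satisfied, so each locus is a proper real-analytic subset of its stratum and thus Haar-null. Since the index set is countable, $\mu(\mathcal{Z})=0$ follows.

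For positivity, Lemma~\ref{Lemma: boundary of best approximation set} provides $\mathcal{B}$ open in $\mathcal{S}_1$, and Proposition~\ref{Proposition: measure of cross section} gives $\supp(\mu_{\mathcal{S}_1})=\mathcal{S}_1$, so it suffices to exhibit a single element of $\mathcal{B}$. I would pick any $\boldsymbol{\theta}\in\mathbb{R}^d$ admitting infinitely many $\boldsymbol{w}$-best approximation vectors $(\boldsymbol{p}_n,q_n)$ with $q_n\to\infty$ --- the generic case by the classical theory of best approximations --- and, fixing any $n$ with $q_n>2^{1/w_d}$, invoke Proposition~\ref{Proposition: dynamical formulation of best approximable vectors} to conclude $a_{\log q_n}\Lambda_{\boldsymbol{\theta}}\in\mathcal{B}$. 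I expect the hardest step to be the stratification argument in the null-set proof: one must verify, for every admissible choice of maximum-realizing indices $(j,j')$, that the resulting equation $|f|^{w_{j'}}=|f'|^{w_j}$ is not identically satisfied on its stratum, and this requires careful bookkeeping that exploits $\boldsymbol{z}_1\neq\pm\boldsymbol{z}_2$ across all configurations.
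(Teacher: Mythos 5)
Your argument is correct and reconstructs exactly the route the paper takes by citing \cite[Lemma 9.2]{Shapira_Weiss_2022_Geometric_and_arithmetic_aspects}: reduce Jordan measurability via Lemma \ref{Lemma: boundary of best approximation set} to $\mu_{\mathcal{S}_1}(\mathcal{Z}\cap\mathcal{S}_1)=0$, exploit the scaling identity $\|\pi_{\mathbb{R}^d}(a_t\boldsymbol{v})\|_{\boldsymbol{w}}=e^{t}\|\pi_{\mathbb{R}^d}(\boldsymbol{v})\|_{\boldsymbol{w}}$ to get $\{a_t\}$-invariance of $\mathcal{Z}$ and hence reduce to $\mu(\mathcal{Z})=0$, prove this by covering $\mathcal{Z}$ by countably many proper (real-analytic, after taking logs away from the zero loci of the linear functionals) subsets of $G$, and obtain positivity from openness of $\mathcal{B}$ together with $\supp(\mu_{\mathcal{S}_1})=\mathcal{S}_1$ and Proposition~\ref{Proposition: dynamical formulation of best approximable vectors}. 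The only adaptation relative to the unweighted source is the stratification over index pairs $(j,j')$ to handle the irrational exponents in $\|\cdot\|_{\boldsymbol{w}}$, which you have handled correctly.
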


\begin{proof}
The proof goes verbatim the same as that of Lemma 9.2 in \cite{Shapira_Weiss_2022_Geometric_and_arithmetic_aspects}.
\end{proof}

\begin{proposition}\label{Proposition: B is tempered}\cite[Proposition 9.8]{Shapira_Weiss_2022_Geometric_and_arithmetic_aspects}
    The set $\mathcal{B}$ is tempered.
\end{proposition}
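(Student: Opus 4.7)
The plan is to follow the argument of \cite[Proposition 9.8]{Shapira_Weiss_2022_Geometric_and_arithmetic_aspects} for the equal weight case, adapting it to the $\boldsymbol{w}$-quasi-norm setting. Fix $\Lambda_0 \in \mathcal{B}$ and enumerate as $0 = t_0 < t_1 < \cdots < t_k \leq 1$ the times at which $a_{t_j}\Lambda_0 \in \mathcal{B}$. For each $j$, set $\boldsymbol{v}_j := a_{-t_j} \boldsymbol{v}(a_{t_j}\Lambda_0) \in (\Lambda_0)_{prim}$; then $(\boldsymbol{v}_j)_{d+1} = q_j := e^{t_j}$ and $\rho_j := \|\pi_{\mathbb{R}^d}(\boldsymbol{v}_j)\|_{\boldsymbol{w}} \leq 1/q_j$. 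Translating the defining property of $\mathcal{B}$ at time $t_j$ back to $\Lambda_0$, the ``admissible box''
\[
A_j := \{x \in \mathbb{R}^{d+1} : |x_{d+1}| \leq q_j,\ \|\pi_{\mathbb{R}^d}(x)\|_{\boldsymbol{w}} \leq \rho_j \}
\]
contains no primitive vector of $\Lambda_0$ other than $\pm \boldsymbol{v}_j$. Since $\boldsymbol{v}_i \neq \pm \boldsymbol{v}_j$ for $i \neq j$ (distinct last coordinates), applying the condition for $A_j$ to $\boldsymbol{v}_i$ with $i < j$ forces the strict monotonicity $\rho_0 > \rho_1 > \cdots > \rho_k$.

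The heart of the proof is a Fibonacci-type growth estimate: I would show the existence of a constant $C = C(\boldsymbol{w}) > 1$ such that $q_{j+2}/q_j \geq C$ for all applicable $j$. Granting this, iterating over even and odd indices and using $q_0 = 1$ and $q_k \leq e$ yields $k \leq 2/\log C + O(1)$, establishing temperedness with $M = M(\boldsymbol{w}) := \lceil 2/\log C\rceil + 1$. To prove the growth bound, consider the integer combination $\boldsymbol{u} := \boldsymbol{v}_{j+2} - m\boldsymbol{v}_{j+1}$ with $m \in \mathbb{Z}$ chosen so that $|u_{d+1}| = |q_{j+2} - mq_{j+1}| \leq q_{j+1}/2$, and write $\boldsymbol{u} = \ell \boldsymbol{u}'$ with $\boldsymbol{u}' \in (\Lambda_0)_{prim}$, $\ell \in \mathbb{Z}_{\geq 1}$. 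The primitive vector $\boldsymbol{u}'$ has $|u'_{d+1}| < q_{j+1}$, so it cannot equal $\pm \boldsymbol{v}_{j+1}$. In the exceptional case $\boldsymbol{u}' \in \{\pm \boldsymbol{v}_i : i \leq j\}$, the resulting explicit linear relation $\boldsymbol{v}_{j+2} = m\boldsymbol{v}_{j+1} \pm \ell \boldsymbol{v}_i$ combined with $m, \ell \geq 1$ and $q_{j+1} > q_j \geq q_i$ gives $q_{j+2}/q_j \geq C$ directly. Otherwise, the $\mathcal{B}$-property (applied at the index whose admissible box $\boldsymbol{u}'$ fits into) forces $\|\pi_{\mathbb{R}^d}(\boldsymbol{u}')\|_{\boldsymbol{w}} > \rho_{j+1}$; combining this with the quasi-triangle inequality (\ref{equation: triangle inequality for quasi norm}) applied to $\pi_{\mathbb{R}^d}(\boldsymbol{u}) = \pi_{\mathbb{R}^d}(\boldsymbol{v}_{j+2}) - m\pi_{\mathbb{R}^d}(\boldsymbol{v}_{j+1})$, the integer-scaling bound $\|m\boldsymbol{x}\|_{\boldsymbol{w}} \leq m^{1/w_d}\|\boldsymbol{x}\|_{\boldsymbol{w}}$, and the reverse bound $\|\ell \pi_{\mathbb{R}^d}(\boldsymbol{u}')\|_{\boldsymbol{w}} \geq \ell^{1/w_1}\|\pi_{\mathbb{R}^d}(\boldsymbol{u}')\|_{\boldsymbol{w}}$, one derives an inequality that fails unless $q_{j+2}/q_{j+1}$ (hence $q_{j+2}/q_j$) exceeds a uniform constant depending only on $\boldsymbol{w}$.

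The main obstacle, relative to the equal-weight case treated in \cite{Shapira_Weiss_2022_Geometric_and_arithmetic_aspects}, is that $\|\cdot\|_{\boldsymbol{w}}$ is only a quasi-norm. Integer scaling is non-homogeneous, satisfying only $\ell^{1/w_1}\|\boldsymbol{x}\|_{\boldsymbol{w}} \leq \|\ell \boldsymbol{x}\|_{\boldsymbol{w}} \leq \ell^{1/w_d}\|\boldsymbol{x}\|_{\boldsymbol{w}}$ for positive integers $\ell$, and the quasi-triangle constant $C_0 = 2^{(1-w_d)/w_d}$ may be substantially larger than $1$ when $w_d$ is small. This forces careful bookkeeping of weight-dependent exponents throughout the case analysis and prevents a direct transcription of the equal-weight argument; nevertheless all the estimates ultimately propagate through and yield the desired constants $C, M$ depending only on $\boldsymbol{w}$.
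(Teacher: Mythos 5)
The paper proves this proposition by a pigeonhole and compactness argument: if there were $M+1$ visits $t_0 < \cdots < t_M \leq 1$, then the vectors $a_{-t_j}\boldsymbol{v}(a_{t_j}\Lambda) = {^t(}\boldsymbol{v}_j, e^{t_j})$ all lie in a fixed compact cylinder $C_r(e)$, so for $M$ large enough (depending only on $d$, $w_d$) two of them must be close, and their difference is then a nonzero lattice vector in the interior of the forbidden region $C^0_{\norm{\boldsymbol{v}_0}_{\boldsymbol{w}}}$, a contradiction. Your proposal takes a genuinely different route via a Fibonacci-type growth estimate $q_{j+2}/q_j \geq C(\boldsymbol{w}) > 1$, but this estimate is not established by the argument you sketch, and I do not believe it can be.

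The gap is in the ``non-exceptional'' case $\boldsymbol{u}' \notin \{\pm\boldsymbol{v}_i\}$ with $m=1$ (which is exactly the relevant case when $q_{j+2}/q_{j+1}$ is close to $1$, since $m$ is the nearest integer to $q_{j+2}/q_{j+1}$). There one has, on one hand, the lower bound from the $\mathcal{B}$-property at index $j+1$:
\[
\norm{\pi_{\mathbb{R}^d}(\boldsymbol{u})}_{\boldsymbol{w}} \;\geq\; \norm{\pi_{\mathbb{R}^d}(\boldsymbol{u}')}_{\boldsymbol{w}} \;>\; \rho_{j+1},
\]
and on the other hand, from the quasi-triangle inequality (\ref{equation: triangle inequality for quasi norm}) applied to $\pi_{\mathbb{R}^d}(\boldsymbol{u}) = \pi_{\mathbb{R}^d}(\boldsymbol{v}_{j+2}) - \pi_{\mathbb{R}^d}(\boldsymbol{v}_{j+1})$ together with $\rho_{j+2} < \rho_{j+1}$, the upper bound
\[
\norm{\pi_{\mathbb{R}^d}(\boldsymbol{u})}_{\boldsymbol{w}} \;\leq\; 2^{(1-w_d)/w_d}\bigl(\rho_{j+2}+\rho_{j+1}\bigr) \;<\; 2\cdot 2^{(1-w_d)/w_d}\,\rho_{j+1}.
\]
Since $2\cdot 2^{(1-w_d)/w_d} \geq 2 > 1$, these two bounds are simultaneously satisfiable for all values of $q_{j+2}/q_{j+1}$; no lower bound on the ratio emerges. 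This is not an artifact of the quasi-norm constant: even with the genuine triangle inequality ($w_d = 1$, constant $1$) the comparison reads $\rho_{j+1} < 2\rho_{j+1}$ and remains vacuous. Thus the case analysis does not close, and the claimed constant $C$ is never produced.

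There is also a structural reason to distrust the lag-$2$ estimate: it is special to $d=1$, where it follows from the exact three-term recurrence $q_{n+1}=a_{n+1}q_n+q_{n-1}$. For $d \geq 2$ no such recurrence relating $\boldsymbol{v}_{j+2}$ to only $\boldsymbol{v}_{j+1}$ and $\boldsymbol{v}_j$ is available, and the correct multiplicative gap is known to need a lag $K$ growing with $d$; correspondingly, the temperedness constant $M$ produced by the paper's pigeonhole argument grows with $d$ (roughly like a covering number $\sim 6^d$ times a bounded factor). Your ``exceptional'' case with $m=1$, $\boldsymbol{v}_{j+2}=\boldsymbol{v}_{j+1}+\ell\boldsymbol{v}_i$, also does not give $q_{j+2}/q_j \geq C$ as stated: one only gets $q_{j+2} = q_{j+1}+\ell q_i > q_j + q_i$, and for $i < j$ the term $q_i/q_j$ is not bounded below in general; the rescue you would need (that inside the window $q_j \leq e$ and $q_i \geq 1$, so $q_{j+2}/q_j \geq 1+1/e$) works there, but does not repair the genuinely vacuous non-exceptional case. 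I recommend abandoning the Fibonacci route and adopting the paper's compactness/pigeonhole proof, which transparently accommodates the $\boldsymbol{w}$-quasi-norm because the covering argument only uses the quasi-triangle inequality to fix the box size.
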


\begin{proof}
    For any $r>0$, let
    \begin{align*}
    C_r(e)=\left\{\boldsymbol{v}={^t(}\boldsymbol{u},c)\in \mathbb{R}^{d+1}: \boldsymbol{u}\in \mathbb{R}^d, \norm{\boldsymbol{u}}_{\boldsymbol{w}}\leq r; c\in \mathbb{R},|c|\leq e \right\}.
    \end{align*}
By compactness of $C_r(e)$ and (\ref{equation: triangle inequality for quasi norm}), we can choose $M\in \mathbb{N}$ large enough (depending only on $w_d$, $d$) such that for any subset of $C_r(e)$ with cardinality at least $M+1$, there are distinct points $\boldsymbol{v}={^t(} \boldsymbol{u},c)$, and $\boldsymbol{v}'={^t(} \boldsymbol{u}',c')$ in $C_r(e)$ satisfying 
    \[\norm{\boldsymbol{u}'-\boldsymbol{u}}_{\boldsymbol{w}}\leq \frac{r}{3}, \text{ and } |c'-c|<1.\]
Now we claim that $\mathcal{B}$ is $M$-tempered, that is, every $\Lambda\in \mathcal{B}$ satisfies (\ref{equation: definition of temperedness}). Assume by contradiction that there exist $\Lambda\in \mathcal{B}$ and $0=t_0<t_1<\cdots<t_M\leq 1$ such that $a_{t_j}\Lambda \in \mathcal{B}$ for all $0\leq j \leq M$. Then for each $0\leq j \leq M$, the vector $\boldsymbol{w}_j=\boldsymbol{v}(a_{t_j}\Lambda)$ (see (\ref{equation: definition of v(lambda)})) satisfies 
\[\boldsymbol{w}_j\in a_{t_j}\Lambda_{prim}\cap D_1, \text{ and }a_{t_j}\Lambda \cap C^0_{\norm{\pi_{\mathbb{R}^d}(\boldsymbol{w}_j)}_{\boldsymbol{w}}}=\{\boldsymbol{0}\}.\]
Moreover, for each $0\leq j \leq M$, the vector
\[a_{-t_j}\boldsymbol{w}_j={^t(}\boldsymbol{v}_j,e^{t_j})\in \Lambda\]
satisfies $\Lambda\cap C^0_{\norm{\boldsymbol{v}_j}_{\boldsymbol{w}}}(e^{t_j})=\{\boldsymbol{0}\}$. As $t_0<t_1<\cdots<t_M$, this implies that 
\[\norm{\boldsymbol{v}_0}_{\boldsymbol{w}}\geq \norm{\boldsymbol{v}_1}_{\boldsymbol{w}}\geq \cdots\geq \norm{\boldsymbol{v}_M}_{\boldsymbol{w}}.\]
Therefore, we have 
\[{^t(} \boldsymbol{v}_j,
    e^{t_j})\in C_{\norm{\boldsymbol{v}_0}_{\boldsymbol{w}}}(e), \text{ for }j=0,\cdots, M.\]
By the choice of $M$, there exist $0\leq j_1<j_2\leq M$ such that
\[\norm{\boldsymbol{v}_{j_1}-\boldsymbol{v}_{j_2}}_{\boldsymbol{w}}<\frac{\norm{\boldsymbol{v}_0}_{\boldsymbol{w}}}{3}, \text{ and } 0<|e^{t_{j_1}}-e^{t_{j_2}}|<1.\]
Set $\boldsymbol{w}={^t(}
    \boldsymbol{v}_{j_1}-\boldsymbol{v}_{j_2},
    e^{t_{j_1}}-e^{t_{j_2}}).$
Then $\boldsymbol{w}\neq \boldsymbol{0}$ and $\boldsymbol{w}\in \Lambda\cap C^0_{\norm{\boldsymbol{v}_0}_{\boldsymbol{w}}}$. This contradicts the assumption that $\Lambda\in \mathcal{B}$.
\end{proof}

\subsection{Proof of Theorem \ref{Theorem: weighted Levy-Khintchine constant} (1) } 
Applying Theorem \ref{Theorem: Pointwise ergodic theorem with rate for nonsmooth functions} (or \cite[Theorem 1.1]{Kleinbock_Shi_Barak_2017_Pointwise_equidistribution_with_an_error_rate_and_with_respect_to_unbounded_functions_MR3606456}) to the identity coset in $\mathfrak{X}_{d+1}$ and $U$ ($U$ is as in (\ref{equation: definition of U})), we obtain that for Lebesgue almost every $\boldsymbol{\theta}\in \mathbb{R}^d$, $\Lambda_{\boldsymbol{\theta}}=u(-\boldsymbol{\theta})\mathbb{Z}^{d+1}$ is generic with respect to $(a_t,\mu)$.

As $\mathcal{S}_1$ is a $\mu$-reasonable cross-section by Theorem \ref{Theorem: cross section is reasonable}, and $\mathcal{B}\subset \mathcal{S}_1$ is a $\mu_{\mathcal{S}_1}$-JM tempered set by Lemma \ref{Lemma: B is JM} and Proposition \ref{Proposition: B is tempered}, Proposition \ref{Proposition: limit behavior of visiting times to tempered set} applies and we obtain that for Lebesgue almost every $\boldsymbol{\theta}\in \mathbb{R}^d$,
\begin{equation}\label{equation: limit of visit times of Lambda theta}
    \lim_{T\to \infty}\frac{1}{T} N(\Lambda_{\boldsymbol{\theta}},T,\mathcal{B})=\mu_{\mathcal{S}_1}(\mathcal{B}).
\end{equation}
In particular, since $\mu_{\mathcal{S}_1}(\mathcal{B})>0$ by Lemma \ref{Lemma: B is JM}, for any $\boldsymbol{\theta}$ satisfying (\ref{equation: limit of visit times of Lambda theta}), we conclude 
\[\{t>0:a_t \Lambda_{\boldsymbol{\theta}}\in \mathcal{B}\}=\{t_1<t_2<\cdots\}\]
is an infinite set.  Then Corollary \ref{Corollary: two sequences are prefix equivalent} implies that $\{q'_n(\boldsymbol{\theta})\}_{n\in \mathbb{N}}$ and $\{q_n(\boldsymbol{\theta})\}_{n\in\mathbb{N}}$ are prefix-equivalent, where $q_n'(\boldsymbol{\theta})=e^{t_n}$, and $q_n(\boldsymbol{\theta})$ is the last row of the $n$-th $\boldsymbol{w}$-best approximable vector of $\boldsymbol{\theta}$.

Note that for any $\boldsymbol{\theta}$ satisfying (\ref{equation: limit of visit times of Lambda theta}), we have
\begin{equation*}
    \# \{t\in (0,t_n]: a_t \Lambda_{\boldsymbol{\theta}}\in \mathcal{B}\}=n, \quad \forall n\in \mathbb{N}.
\end{equation*}
As $\mathcal{B}$ is tempered, $t_n\to\infty$ as $n\to \infty$, and thus (\ref{equation: limit of visit times of Lambda theta}) implies that for Lebesgue almost every $\boldsymbol{\theta}\in \mathbb{R}^d$,
\begin{equation*}
    \frac{n}{\log q_n'(\boldsymbol{\theta})}=\frac{n}{t_n}=\frac{1}{t_n} \# \{t\in (0,t_n]: a_t \Lambda_{\boldsymbol{\theta}}\in \mathcal{B}\}\to \mu_{\mathcal{S}_1}(\mathcal{B}), \text{ as }n\to \infty.
\end{equation*}
It is clear that as $\{q'_n(\boldsymbol{\theta})\}_{n\in \mathbb{N}}$ and $\{q_n(\boldsymbol{\theta})\}_{n\in\mathbb{N}}$ are prefix-equivalent, we have
\begin{equation*}
    \lim_{n\to \infty}\frac{\log q_n(\boldsymbol{\theta})}{n}=\lim_{n\to \infty}\frac{\log q_n'(\boldsymbol{\theta})}{n}=\frac{1}{\mu_{\mathcal{S}_1}(\mathcal{B})}.
\end{equation*}
Set $L_d(\boldsymbol{w})=1/\mu_{\mathcal{S}_1}(\mathcal{B})$, then (\ref{equation: limit of qn}) follows.

\section{The consequence of effective equidistribution and Borel-Cantelli Lemma}\label{Section: Borel-Cantelli}

In this section, we follow the strategy of \cite{Kleinbock_Margulis_1999_Logarithm_laws_MR1719827} and \cite{Cheung_Chevallier_2019_Levy_Khintchin_Theorem} to prove (2) (3) of Theorem \ref{Theorem: weighted Levy-Khintchine constant}.

Let $\Lambda\in \mathcal{B}$. Let $r(\Lambda)$ be as in (\ref{equation: definition of r(lambda)}), and $t(\Lambda)>0$ be the smallest $t>0$ such that $a_t\Lambda\in \mathcal{B}$. Now we investigate the quantity $e^{t(\Lambda)}r(\Lambda)$. Consider the symmetric convex body
\[\mathcal{C}(\Lambda):=\{\boldsymbol{v}\in \mathbb{R}^{d+1}: \norm{\pi_{\mathbb{R}^d}(\boldsymbol{v})}_{\boldsymbol{w}}<r(\Lambda), |\boldsymbol{e}_{d+1}\cdot \boldsymbol{v}|<e^{t(\Lambda)}\},\]
where $\boldsymbol{e}_{d+1}\cdot \boldsymbol{v}$ is the last row of $\boldsymbol{v}$. By the choice of $t(\Lambda)$, $\Lambda\cap \mathcal{C}(\Lambda)=\{\boldsymbol{0}\}$. Then as the volume of $\mathcal{C}(\Lambda)$ is $2^{d+1}r(\Lambda)e^{t(\Lambda)}$, Minkowski's first theorem implies 
\begin{equation}\label{equation: upperbound for e t(lambda) r(lambda)}
    e^{t(\Lambda)}r(\Lambda)\leq 1.
\end{equation}

In particular, for any $\boldsymbol{\theta}\in \mathbb{R}^d$, as before we let $^t(\boldsymbol{p}_n(\boldsymbol{\theta}),q_n(\boldsymbol{\theta}))={^t(}\boldsymbol{p}_n,q_n)\in \mathbb{Z}^d\times \mathbb{N}$ be the $n$-th $\boldsymbol{w}$-best approximable vector of $\boldsymbol{\theta}$. Let $r_n=r_n(\boldsymbol{\theta})=\norm{q_n\boldsymbol{\theta}-\boldsymbol{p}_n}_{\boldsymbol{w}}$. Note that $q_n \cdot r_n=\norm{\pi_{\mathbb{R}^d}(a_{t_n} u(-{\boldsymbol{\theta}}){^t(}\boldsymbol{p}_n,q_n))}_{\boldsymbol{w}}$, where $t_n=\log q_n$, and that 
\begin{equation}\label{equation: relation between two notions}
    r(a_{t_n}\Lambda_{\boldsymbol{\theta}})=q_n r_n, \text{ and } t(a_{t_n}\Lambda_{\boldsymbol{\theta}})=\log(q_{n+1}/q_n).
\end{equation}
Thus, by (\ref{equation: upperbound for e t(lambda) r(lambda)}), we have
\begin{equation}\label{equation: upper bound for qn times rn}
   q_n(\boldsymbol{\theta})r_n(\boldsymbol{\theta}) \leq q_{n+1}(\boldsymbol{\theta})r_n(\boldsymbol{\theta})\leq 1.
\end{equation}
We are ready to prove Theorem \ref{Theorem: weighted Levy-Khintchine constant} (3):
\begin{proof}[Proof of Theorem \ref{Theorem: weighted Levy-Khintchine constant} (3)]
    Define a map $F:\mathcal{B}\to \mathbb{R}$ by
\[F(\Lambda)=e^{t(\lambda)} r(\Lambda) \]
Let $f:\mathbb{R}\to \mathbb{R}$ be a continuous and bounded function. Then by (\ref{equation: upperbound for e t(lambda) r(lambda)}), $f\circ F:\mathcal{B}\to \mathbb{R}$ is a continuous and bounded function. Denote by $R:\mathcal{B}\to \mathcal{B}$ by the first return map induced by $a_t$. Let $\boldsymbol{\theta}\in \mathbb{R}^d$ be such that $\{a_t\Lambda_{\boldsymbol{\theta}}:t>0\}\cap \mathcal{B}$ is infinite. Then by (\ref{equation: relation between two notions}) and Corollary \ref{Corollary: two sequences are prefix equivalent}, there exists $k_0\in \mathbb{N}$ such that for all $i\in \mathbb{N}$, 
\[F(R^i \Lambda_{\boldsymbol{\theta}})=q_{k_0+i}(\boldsymbol{\theta})r_{k_0+i}(\boldsymbol{\theta})=\beta_{k_0+i}(\boldsymbol{\theta}).\]
By Proposition \ref{Proposition: limit behavior of visiting times to tempered set}, for almost all $\boldsymbol{\theta}\in \mathbb{R}^d$, $\Lambda_{\boldsymbol{\theta}}$ is $(a_t,\mu_{\mathcal{S}_1}|_{\mathcal{B}})$-generic. Therefore, for almost all $\boldsymbol{\theta}$,
\begin{align*}
    \lim_{n\to\infty}\frac{1}{n}\sum_{i=0}^{n-1} f\circ F(R^i \Lambda_{\boldsymbol{\theta}})=\frac{1}{\mu_{\mathcal{S}_1}(\mathcal{B})}\int_{\mathcal{B}} f\circ F(\Lambda) d\mu_{\mathcal{S}_1}(\Lambda
    )=\int_{\mathbb{R}} f(x)d\nu_d^{\boldsymbol{w}}.
\end{align*}
This finishes the proof.
\end{proof}

Note that (\ref{equation: upper bound for qn times rn}) gives an upper bound for $q_n(\boldsymbol{\theta})r_n(\boldsymbol{\theta})$ for any $\boldsymbol{\theta}\in \mathbb{R}^d$. In order to prove Theorem \ref{Theorem: weighted Levy-Khintchine constant} (2), we have to obtain an lower bound of $q_n(\boldsymbol{\theta})r_n(\boldsymbol{\theta})$ for suitable $\boldsymbol{\theta}$. To do so, we consider a decreasing function $\varphi:\mathbb{N}\to \mathbb{R}_+$ defined by
 \begin{align*}
     \varphi(n)=n^{-\alpha}
 \end{align*}
where $\alpha>\frac{2}{(1+d)w_d}$ is a constant. Moreover, we define a function $s:\mathbb{N}\to \mathbb{R}$ by
\[s(n)=n-\frac{\log \varphi(n)}{2}.\]
For any $\epsilon>0$, define 
\[K_{\epsilon}:=\{\Lambda\in \mathfrak{X}_{d+1}: \forall \boldsymbol{v}\in \Lambda\setminus\{0\}, \norm{\boldsymbol{v}}_{\infty}\geq \epsilon\},\]
where $\norm{\cdot}_{\infty}$ is the usual supremum norm on $\mathbb{R}^{d+1}$.
Then by Mahler's compactness criterion, $K_{\epsilon}$ is compact. Note that Siegal's integral formula implies
\begin{equation}\label{equation: measure of cusp nhbd}
    1-\mu_{d+1}(K_{\epsilon})\ll_d \epsilon^{d+1}.
\end{equation}

\begin{lemma}\label{Lemma: consequence of qn rn small}
    Given $\boldsymbol{\theta}\in \mathbb{R}^d$ and $n\in \mathbb{N}$, let $k_n=k_n(\boldsymbol{\theta})=\lfloor \log q_n(\boldsymbol{\theta}) \rfloor$. Assume there exists $c>1$ such that $k_n\leq cn$. If there exists $n\in \mathbb{N}$ such that $q_n(\boldsymbol{\theta})r_n(\boldsymbol{\theta})\leq \varphi(n)$, then 
    \begin{equation*}
a_{s(k_n)}\Lambda_{\boldsymbol{\theta}}\in \mathfrak{X}_{d+1}\setminus K_{c(k_n)},
    \end{equation*}
    where $c(k_n)=e c^{\alpha w_1}\varphi(k_n)^{w_d/2}$.
\end{lemma}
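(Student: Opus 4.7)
The plan is to exhibit an explicit short nonzero vector in $a_{s(k_n)}\Lambda_{\boldsymbol{\theta}}$, namely the image under $a_{s(k_n)}$ of the lattice vector
\[
\boldsymbol{v}_n := u(-\boldsymbol{\theta}){^t(}\boldsymbol{p}_n,q_n) = {^t(}\boldsymbol{p}_n - q_n\boldsymbol{\theta},\, q_n) \in \Lambda_{\boldsymbol{\theta}},
\]
and show that its sup-norm is strictly smaller than $c(k_n)$, which would force $a_{s(k_n)}\Lambda_{\boldsymbol{\theta}} \notin K_{c(k_n)}$. Write $t = s(k_n) = k_n - \tfrac{1}{2}\log\varphi(k_n)$ and $\boldsymbol{v} := a_t \boldsymbol{v}_n$. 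Its entries are $e^{w_i t}(p_{n,i} - q_n \theta_i)$ for $1\le i\le d$ and $e^{-t}q_n$ in the last coordinate.

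For the last coordinate, use $k_n = \lfloor \log q_n\rfloor$ so that $q_n < e^{k_n+1}$, giving
\[
|e^{-t}q_n| = e^{-k_n}\varphi(k_n)^{1/2}\,q_n < e\,\varphi(k_n)^{1/2}.
\]
For the first $d$ coordinates, the definition of $\boldsymbol{w}$-quasi-norm gives $|p_{n,i}-q_n\theta_i|\le r_n^{w_i}$, so
\[
|e^{w_i t}(p_{n,i}-q_n\theta_i)| \le (e^{t}r_n)^{w_i}.
\]
Now use the hypothesis $q_n r_n\le \varphi(n)$ together with $q_n\ge e^{k_n}$ to get $r_n \le \varphi(n) e^{-k_n}$, which yields
\[
e^{t} r_n \le e^{k_n}\varphi(k_n)^{-1/2}\cdot \varphi(n)e^{-k_n} = \varphi(n)\,\varphi(k_n)^{-1/2}.
\]

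The remaining step is to absorb the dependence on $n$ into $k_n$ using the hypothesis $k_n\le cn$. Since $\varphi(m)=m^{-\alpha}$ is decreasing, $n\ge k_n/c$ gives $\varphi(n)\le c^{\alpha}\varphi(k_n)$, hence $e^{t}r_n\le c^{\alpha}\varphi(k_n)^{1/2}$ and
\[
|e^{w_i t}(p_{n,i}-q_n\theta_i)| \le c^{\alpha w_i}\,\varphi(k_n)^{w_i/2}.
\]
Because $c\ge 1$ and (for $n$ large enough that $\varphi(k_n)\le 1$) $\varphi(k_n)^{w_i/2}$ is maximized at the smallest $w_i$, we may uniformly bound the right-hand side by $c^{\alpha w_1}\varphi(k_n)^{w_d/2}$. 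Combining with the estimate on the last coordinate (and noting $\varphi(k_n)^{1/2}\le \varphi(k_n)^{w_d/2}$ since $w_d\le 1$), we obtain
\[
\|\boldsymbol{v}\|_{\infty} \le e\,c^{\alpha w_1}\varphi(k_n)^{w_d/2} = c(k_n).
\]
Since the last coordinate $e^{-t}q_n$ is strictly positive, $\boldsymbol{v}\ne 0$, and thus $a_{s(k_n)}\Lambda_{\boldsymbol{\theta}}\notin K_{c(k_n)}$, as desired.

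The only genuinely delicate point is the bookkeeping that converts a bound involving $\varphi(n)$ into one involving $\varphi(k_n)$; this is precisely where the hypothesis $k_n\le cn$ enters, together with the fact that all weights $w_i$ satisfy $w_d\le w_i\le w_1\le 1$, which lets us consolidate the $d$ different exponents into the single clean bound $\varphi(k_n)^{w_d/2}$ appearing in $c(k_n)$.
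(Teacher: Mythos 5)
Your proof is correct and takes essentially the same approach as the paper: pass the bound $q_n r_n \le \varphi(n)$ through the flow $a_{s(k_n)}$, absorb the $n$ versus $k_n$ discrepancy using $k_n\le cn$ and the monotonicity of $\varphi$, and bound both the horizontal block (via $(e^t r_n)^{w_i}$) and the last coordinate to land inside $\|\cdot\|_\infty < c(k_n)$. The only cosmetic difference is that you unpack the $\boldsymbol{w}$-quasi-norm into per-coordinate estimates directly, whereas the paper first writes the displayed $\boldsymbol{w}$-quasi-norm bound $e^{s(k_n)}r_n\le c^\alpha\varphi(k_n)^{1/2}$ and then converts to $\|\cdot\|_\infty$.
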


\begin{proof}
As $\boldsymbol{\theta}$ is fixed, in the following we will write $q_n(\boldsymbol{\theta})=q_n$ and $r_n(\boldsymbol{\theta})=r_n$ for simplicity. Since 
    \[r_n=\norm{q_n\boldsymbol{\theta}-\boldsymbol{p}_n}_{\boldsymbol{w}}=\norm{\pi_{\mathbb{R}^d}(u(-\boldsymbol{\theta}){^t(}\boldsymbol{p}_n,q_n))}_{\boldsymbol{w}},\]
we have 
\[\norm{\pi_{\mathbb{R}^d}(a_{s(k_n)}u(-{\boldsymbol{\theta}}){^t(}\boldsymbol{p}_n,q_n))}_{\boldsymbol{w}}=e^{s(k_n)} r_n\leq \varphi(n)\varphi(k_n)^{-1/2}\leq c^{\alpha}\varphi(k_n)^{1/2}.\]
In particular, this implies that 
\[\norm{\pi_{\mathbb{R}^d}(a_{s_n}u(-{\boldsymbol{\theta}}){^t(}\boldsymbol{p}_n,q_n))}_{\infty}\leq c^{\alpha w_1}\varphi(k_n)^{w_d/2}.\]
On the other hand, we also have
\[e^{-s_n}q_n\leq e\varphi(k_n)^{1/2}\leq e \varphi(k_n)^{w_d/2}.\]
We conclude that 
\[\norm{a_{s(k_n)}u(-{\boldsymbol{\theta}}){^t(}\boldsymbol{p}_n,q_n)}_{\infty}\leq e c^{\alpha w_1}\varphi(k_n)^{w_d/2},\]
and the lemma follows.
\end{proof}
Now we want to show that the set $\mathfrak{C}$ defined by
\begin{equation*}
    \mathfrak{C}:=\{\boldsymbol{\theta}\in \mathbb{R}^d: a_{s(n)}\Lambda_{\boldsymbol{\theta}}\in \mathfrak{X}_{d+1}\setminus K_{c(n)}\text{ for infinitely many } n\in \mathbb{N}\}
\end{equation*}
satisfies $Leb(\mathfrak{C})=0$,
where 
\[c(n)=e c^{\alpha w_1}\varphi(n)^{w_d/2},\]
and $c$ is a fixed constant greater than $1$.
For each $n\in \mathbb{N}$, define
\begin{equation*}
    \mathfrak{C}_n:=\{\boldsymbol{\theta}\in [0,1]^d: a_{s(n)}\Lambda_{\boldsymbol{\theta}}\in \mathfrak{X}_{d+1}\setminus K_{c(n)}\}.
\end{equation*}
Then it is clear that $Leb(\mathfrak{C})=0$ would follow from the following
\begin{proposition}\label{Proposition: limsup set has measure 0}
    $Leb(\limsup_{n\to \infty}\mathfrak{C}_n)=0$.
\end{proposition}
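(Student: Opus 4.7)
My plan is to apply the (first) Borel--Cantelli lemma, so it suffices to show $\sum_n Leb(\mathfrak{C}_n) < \infty$. I will estimate each $Leb(\mathfrak{C}_n)$ by combining a smooth approximation of the characteristic function of the cusp neighborhood $U_n := \mathfrak{X}_{d+1}\setminus K_{c(n)}$ with the effective equidistribution statement of Corollary \ref{Corollary: effective equidistribution}. Under the substitution $A = -\boldsymbol{\theta}$, integration over $\boldsymbol{\theta}\in[0,1]^d$ agrees with integration over $\mathcal{Y}$ (in the present setting $m=d$, $n=1$), so Corollary \ref{Corollary: effective equidistribution} applies directly to the flow $\boldsymbol{\theta}\mapsto a_{s(n)} u(-\boldsymbol{\theta})\mathbb{Z}^{d+1}$.

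Fix a small constant $r_0 > 0$ and let $\theta_{r_0}$ be the bump function from Lemma \ref{Lemma: bump function}. Define $f_n := \theta_{r_0}*\chi_{U_n'}$ on $\mathfrak{X}_{d+1}$, where $U_n' := \mathfrak{X}_{d+1}\setminus K_{(1+Cr_0)c(n)}$ with $C = C(d)$ chosen so that every $g\in\mathcal{O}_{r_0}$ satisfies $\|g^{-1}v\|_\infty \leq (1+Cr_0)\|v\|_\infty$ for all $v\in\mathbb{R}^{d+1}$ (the existence of such $C$ follows from the definition of $\mathcal{O}_{r_0}$, since the $\sup$-matrix norm dominates the operator norm on $\mathbb{R}^{d+1}$ up to a factor of $d+1$). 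One then checks: (a) $f_n\geq\chi_{U_n}$, because $\Lambda\in U_n$ and $g\in\mathcal{O}_{r_0}$ force $g^{-1}\Lambda\in U_n'$; (b) $\supp f_n\subset U_n'$ and $\|f_n\|_\infty\leq 1$; (c) the derivative bound in Lemma \ref{Lemma: bump function} combined with $\mu_G(\mathcal{O}_{r_0})\ll r_0^{(d+1)^2-1}$ yields $\mathcal{N}_l(f_n)\ll_{l,r_0} 1$, uniformly in $n$. Applying Corollary \ref{Corollary: effective equidistribution} at time $t = s(n)$ then gives
\[
Leb(\mathfrak{C}_n) \leq \int_{[0,1]^d} f_n\bigl(a_{s(n)} u(-\boldsymbol{\theta})\mathbb{Z}^{d+1}\bigr)\,d\boldsymbol{\theta} = \mu_{d+1}(f_n) + O\bigl(e^{-\delta s(n)}\bigr).
\]
By the Siegel-type bound (\ref{equation: measure of cusp nhbd}), $\mu_{d+1}(f_n)\leq\mu_{d+1}(U_n')\ll c(n)^{d+1}\asymp n^{-\alpha(d+1)w_d/2}$. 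The standing assumption $\alpha>2/((d+1)w_d)$ makes this exponent exceed $1$, so $\sum_n c(n)^{d+1}<\infty$; the error $e^{-\delta s(n)}\leq e^{-\delta n}$ is trivially summable. Borel--Cantelli then concludes the proof.

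The main technical obstacle is producing $f_n$ with $\mathcal{N}_l$-norm bounded uniformly in $n$. The key trick is to mollify at a \emph{fixed} group scale $r_0$ (so the Sobolev bound is an $n$-independent constant) while enlarging the target set from $U_n$ to $U_n'$ by a factor $1+Cr_0$; this decouples the shrinking cusp scale $c(n)$ (which governs the main term) from the mollifier scale $r_0$ (which governs the equidistribution error). Once this setup is in place, the summability reduces to an algebraic check that is designed to match the hypothesis on $\alpha$ exactly.
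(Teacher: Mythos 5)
Your proposal is correct in substance and takes a genuinely different route from the paper. The paper mollifies $\chi_{K_{2c(n)}}$ at the \emph{shrinking} scale $R_n \asymp c(n)^{d+1}$ (the injectivity radius of $K_{c(n)}$), which forces the bulk of their argument to be spent estimating $\mathcal{N}_l(f_n) \ll R_n^{-M}$ and then verifying that $e^{-\delta s(n)} R_n^{-M}$ is still summable because $R_n^{-M}$ only grows polynomially. Your idea of mollifying at a \emph{fixed} scale $r_0$ and absorbing the blur into an enlargement $K_{c(n)} \rightsquigarrow K_{(1+Cr_0)c(n)}$ makes $\mathcal{N}_l(f_n)$ uniformly bounded in $n$, so the error term is immediately $O(e^{-\delta n})$ and the Sobolev/Lipschitz computation that occupies most of the paper's proof disappears; the price is a harmless constant factor $(1+Cr_0)^{d+1}$ in the main term. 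This is a cleaner decoupling of the cusp scale from the mollifier scale, and it works because the hypothesis $\alpha > 2/((d+1)w_d)$ is exactly what makes $\sum_n c(n)^{d+1}$ converge, independently of any constants.

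Two small inaccuracies worth fixing. First, item (b) in your checklist is false as stated: convolving $\chi_{U_n'}$ with $\theta_{r_0}$ \emph{enlarges} the support to $\mathcal{O}_{r_0} \cdot U_n'$, not the reverse. The inequality you actually need, $\mu_{d+1}(f_n) \le \mu_{d+1}(U_n')$, is nonetheless true (with equality), but the correct reason is $G$-invariance of $\mu_{d+1}$ together with $\int_G \theta_{r_0}\,d\mu_G = 1$: Fubini gives $\mu_{d+1}(f_n) = \int_G \theta_{r_0}(g)\,\mu_{d+1}(g^{-1}U_n')\,d\mu_G(g) = \mu_{d+1}(U_n')$. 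Second, and more importantly, $U_n'$ is the \emph{complement} of a compact set, so $f_n = \theta_{r_0}*\chi_{U_n'}$ is smooth and bounded but \emph{not} compactly supported, and Corollary \ref{Corollary: effective equidistribution} is stated only for $C_c^\infty(\mathfrak{X}_N)$. The clean fix is to apply the corollary instead to $1 - f_n = \theta_{r_0} * \chi_{K_{(1+Cr_0)c(n)}}$, which \emph{is} compactly supported (its support sits inside $\mathcal{O}_{r_0} \cdot K_{(1+Cr_0)c(n)}$) and has $\mathcal{N}_l(1-f_n) \ll_{l,r_0} 1$ by the same bounds; since $\int_{\mathcal{Y}}d\boldsymbol{\theta}=1=\mu_{d+1}(1)$, subtracting recovers exactly the estimate you wanted for $\int f_n$. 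With these two corrections your argument is complete.
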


\begin{proof}
We follow the strategy of Kleinbock-Margulis \cite{Kleinbock_Margulis_1999_Logarithm_laws_MR1719827} (see also \cite{Kleinbock_Strombergsson_Yu_2022_A_measure_estimate_MR4500198}). For each $n\in \mathbb{N}$, we denote by $R_n$ the injectivity radius of $K_{c(n)}$. Then (see e.g. \cite[Proposition 3.5]{Kleinbock_Margulis_2012_On_effective_MR2867926})
\begin{equation}\label{equation: lower bound for injectivity radius}
    R_n\gg_d c(n)^{d+1}.
\end{equation}
Also since $c(n)\to 0$ as $n\to \infty$, $R_n\to 0$.

For any $r>0$, let $\theta_r\in C_c^{\infty}(G)$ be the bump function given by Lemma \ref{Lemma: bump function}. For each $n\in \mathbb{N}$, define
\[f_n=\theta_{R_n}*\chi_{K_{2c(n)}}\in C_c^{\infty}(\mathfrak{X}_{d+1}).\]
Note that for any $g\in \mathcal{O}_{1/2}$ (see (\ref{equation: definition of neighborhood of G})), and $\boldsymbol{v}\in \mathbb{R}^{d+1}$, we have 
\[\frac{1}{2}\norm{\boldsymbol{v}}\leq \norm{g\boldsymbol{v}}\leq 2 \norm{\boldsymbol{v}}.\]
As $R_n\to 0$ as $n\to \infty$, for all $n$ large enough, $R_n<1/2$. Therefore, for all $n$ large enough, we have
\begin{equation}\label{equation: inequality about fn}
    \chi_{K_{4c(n)}}\leq f_n\leq \chi_{K_{c(n)}}.
\end{equation}
By Corollary \ref{Corollary: effective equidistribution}, there exists $l\in \mathbb{N}$ and $\delta>0$ such that for all $n\in \mathbb{N}$,
\begin{equation*}
    \int_{[0,1]^d} f_n(a_{s(n)}\Lambda_{\boldsymbol{\theta}}) d\boldsymbol{\theta}=\mu_{d+1}(f_n)+O(e^{-\delta s(n)}\mathcal{N}_l(f_n)),
\end{equation*}
where $\mu_{d+1}$ is the $G$-invariant probability measure on $\mathfrak{X}_{d+1}$. Therefore, 
\begin{align*}
\int_{[0,1]^d}\chi_{K_{c(n)}}(a_{s(n)}\Lambda_{\boldsymbol{\theta}})d\boldsymbol{\theta}
&\geq \int_{[0,1]^d} f_n(a_{s(n)}\Lambda_{\boldsymbol{\theta}}) d\boldsymbol{\theta}\\
&=\mu_{d+1}(f_n)+O(e^{-\delta s(n)}\mathcal{N}_l(f_n))\\
&\geq \mu_{d+1}(K_{4c(n)})+O(e^{-\delta s(n)}\mathcal{N}_l(f_n)),
\end{align*}
and we obtain
\begin{align}\label{align: inequality for Cn}
    Leb(\mathfrak{C}_n)\leq 1- \mu_{d+1}(K_{4c(n)})+O(e^{-\delta s(n)}\mathcal{N}_l(f_n)).
\end{align}
We now estimate $\mathcal{N}_l(f_n)$ for large enough $n$. First of all, by definition of $f_n$, $\norm{f_n}_{\infty}\leq 1$.

Denote by $\mu_G$ the Haar measure on $G$. Using Lemma \ref{Lemma: bump function}, the fact that $supp(\theta_{R_n})\subset \mathcal{O}_{R_n}$, and $\mu_G(\mathcal{O}_{R_n})\asymp_d R_n^{(d+1)^2-1}$, for any differential operator $Z$ on $C_c^{\infty}(\mathfrak{X}_{d+1})$ of degree $\deg(Z)\leq l$ we have
\begin{align*}
    \int |\mathcal{D}_Z(f_n)|^2 d\mu_{d+1}
    &=\int |\mathcal{D}_Z(\theta_{R_n})*\chi_{K_{2c(n)}}|^2 d\mu_{d+1}\\
    &\leq \left(\int|\mathcal{D}_Z(\theta_{R_n})(g)d\mu_G(g)| \right)^2\\
    &\leq \left(\mu_G(\mathcal{O}_{R_n})\cdot\norm{\mathcal{D}_Z(\theta_{R_n})}_{L^{\infty}} \right)^2\\
    &\ll_{d,l}R_n^{-2l}.
\end{align*}
By definition of Sobolev norm (see (\ref{equation: definition of Sobolev norm})), we obtain
\begin{equation*}
    \norm{f_n}_{H^l}\ll_{l,d} R_n^{-2l}.
\end{equation*}
It remains to estimate the Lipschitz norm of $f_n$. Let $h\in G$ be such that $\dist_G(h,Id)<R_n$. When $R_n$ is sufficiently small, $\dist_G(h,Id)\asymp_d \norm{h-Id}$. In particular, there exists a positive constant $C$ (depending only on $d$) such that
\begin{equation*}
    C\cdot \dist_G(h,Id)\geq \norm{h-Id}.
\end{equation*}
As a result, $h\in \mathcal{O}_{CR_n}$, and by Lemma \ref{Lemma: inclusion about Or},
\begin{equation}\label{equation: h O subset of}
    h\mathcal{O}_{R_n}\subset \mathcal{O}_{2(C+1)R_n}.
\end{equation}
Moreover, by Lemma \ref{Lemma: bump function}, when $R_n$ is sufficiently small, $supp(\theta_{R_n})$ is contained in a fixed compact set (say $\mathcal{O}_{1/2}$), we have
\begin{equation}\label{equation: Lipschitz norm of theta}
    \sup_{g_1\neq g_2}\left\{ \frac{|\theta_{R_n}(g_1)-\theta_{R_n}(g_2)|}{\dist_G(g_1,g_2)}\right\}\ll_d \sup_{\deg(Z)=1}\norm{\mathcal{D}_Z(\theta_{R_n})}_{L^{\infty}}
    \ll_d R_n^{-(1+d)^2}.
\end{equation}
Now let $n\in \mathbb{N}$ be sufficiently large such that $R_n$ is small enough to satisfy (\ref{equation: h O subset of}) and (\ref{equation: Lipschitz norm of theta}). Now for any $x,y\in \mathfrak{X}_{d+1}$, if $\dist(x,y)\geq R_n$, we obtain
\begin{equation*}
    \frac{|f_n(x)-f_n(y)|}{\dist(x,y)}\leq 2 R_n^{-1}.
\end{equation*}
If $\dist(x,y)<R_n$, let $h\in G$ be such that $y=hx$ and $\dist(x,y)=\dist_G(h,Id)$. 
Therefore, by (\ref{equation: h O subset of}), (\ref{equation: Lipschitz norm of theta}) and $G$-invariance of $\mu_G$, we have
\begin{align*}
 |f_n(x)-f_n(y)|&=\left|\int \theta_{R_n}(g)\chi_{2c(n)}(g^{-1}x)d\mu_G-\int \theta_{R_n}(g)\chi_{2c(n)}(g^{-1}hx)d\mu_G \right|\\
 &=\left|\int (\theta_{R_n}(g)- \theta_{R_n}(hg))\chi_{2c(n)}(g^{-1}x)d\mu_G \right|\\
 &\leq \int_{\mathcal{O}_{2(C+1)R_n}} \frac{\left|\theta_{R_n}(g)-\theta_{R_n}(hg) \right|}{\dist_G(h,Id)}\cdot \dist_G(h,Id) d\mu_G\\
 &\ll_d \mu_G(\mathcal{O}_{2(C+1)R_n})\cdot R_n^{-(1+d)^2}\cdot\dist_G(h,Id)\\
 &\ll_d R_n^{-1}\cdot \dist_G(h,Id).
\end{align*}
Therefore, for all sufficiently large $n$, we have 
\[\norm{f_n}_{\Lip}\ll_d R_n^{-1}.\]
To conclude, there is a constant $M>0$ such that for all sufficiently large $n\in \mathbb{N}$,
\begin{equation}\label{equation: estimate of Nl}
    \mathcal{N}_l(f_n)\ll_d R_n^{-M}.
\end{equation}
Now by (\ref{equation: measure of cusp nhbd}), (\ref{equation: lower bound for injectivity radius}), (\ref{align: inequality for Cn}) and (\ref{equation: estimate of Nl}), there exists $n_0\in \mathbb{N}$ such that for all $n\geq n_0$,
\begin{align*}
    Leb(\mathfrak{C}_n)&\ll_d c(n)^{d+1}+e^{-\delta s(n)}\cdot R_n^{-M}\\
    &\ll_{l,d} n^{-(d+1)w_d\alpha/2}+e^{-\delta n}p(n),
\end{align*}
where
\[p(n)=n^{\frac{(d+1)w_dM\alpha-\delta \alpha}{2}}.\]
Recall that $\alpha$ is chosen to satisfy $\alpha>\frac{2}{(d+1)w_d}$. Since $p(n)$ grows polynomially as $n$ grows, $e^{-\delta n}p(n)\leq e^{-\delta n/2}$ for all $n$ large enough. Therefore, we have
\begin{align*}
    \sum_{n=1}^{\infty} Leb(\mathfrak{C}_n)\ll_{l,d} \sum_{n=1}^{\infty} ( n^{-(d+1)w_d\alpha/2}+e^{-\delta n}p(n))<\infty.
\end{align*}
By Borel-Cantelli lemma, the proposition follows.
\end{proof}

\begin{proof}[Proof of Theorem \ref{Theorem: weighted Levy-Khintchine constant} (2)]

Define the set
\begin{align*}
    \mathfrak{F}:=\{\boldsymbol{\theta}\in \mathbb{R}^d:\lim_{n\to \infty}\frac{1}{n}\log q_n(\boldsymbol{\theta})=1/{\mu_{\mathcal{S}_1}(\mathcal{B})}\}.
\end{align*}
Then $\mathfrak{F}$ has full Lebesgue. Moreover, let $c=\min\{2,2/\mu_{\mathcal{S}_1}(\mathcal{B})\}$. For any $\boldsymbol{\theta}\in \mathfrak{F}$, there exists $n(\boldsymbol{\theta})\in \mathbb{N}$ such that for all $n\geq n(\boldsymbol{\theta})$, $k_n(\boldsymbol{\theta})=\lfloor \log q_n(\boldsymbol{\theta})\rfloor\leq cn.$ Then Lemma \ref{Lemma: consequence of qn rn small} implies that 
\[\{\boldsymbol{\theta}\in \mathfrak{F}: q_n(\boldsymbol{\theta})r_n(\boldsymbol{\theta})\leq \varphi(n) \text{ for infinitely many }n\}\subset \mathfrak{F}\cap\mathfrak{C},\]
which is a Lebesgue null set by Proposition \ref{Proposition: limsup set has measure 0}. We conclude that for Lebesgue almost every $\boldsymbol{\theta}$, there are only finitely many $n\in \mathbb{N}$ such that 
\[q_n(\boldsymbol{\theta})r_n(\boldsymbol{\theta})\leq \varphi(n).\]
Therefore, for Lebesgue almost every $\boldsymbol{\theta}$, we have 
\[\liminf_{n\to \infty}\frac{1}{n}\log r_n(\boldsymbol{\theta})\geq -\lim_{n\to\infty}\frac{1}{n}\log q_n(\boldsymbol{\theta}).\]
On the other hand, by (\ref{equation: upper bound for qn times rn}) we have
\[\limsup_{n\to \infty}\frac{1}{n}\log r_n(\boldsymbol{\theta})\leq -\lim_{n\to\infty}\frac{1}{n}\log q_n(\boldsymbol{\theta}).\]
Thus, for Lebesgue almost every $\boldsymbol{\theta}$,
\[\lim_{n\to \infty}\frac{1}{n}\log r_n(\boldsymbol{\theta})= -\lim_{n\to\infty}\frac{1}{n}\log q_n(\boldsymbol{\theta}).\]
\end{proof}

\section{Appendix}
Let $\boldsymbol{w}={^t(}w_1,\cdots,w_d)\in \mathbb{R}^d$ be a weight vector and as before, we define the one-parameter diagonal flow $\{a_t\}$ as in (\ref{equation: definition of at from w}). We may assume that $w_1\geq \cdots \geq w_d>0$. Let $\norm{\cdot}_{\infty}$ be the usual supremum norm on $\mathbb{R}^{d+1}$, and $\norm{\cdot}_{\boldsymbol{w},\infty}$ be a quasi-norm defined by 
\[\norm{\boldsymbol{v}}_{\boldsymbol{w},\infty}=\max\{\norm{\pi_{\mathbb{R}^d}(\boldsymbol{v})}_{\boldsymbol{w}},|\boldsymbol{e}_{d+1}\cdot \boldsymbol{v}|\},\]
where $\norm{\cdot}_{\boldsymbol{w}}$ is the $\boldsymbol{w}$-quasi-norm as in (\ref{equation: definition of w-quasi norm}), and $\boldsymbol{e}_{d+1}\cdot \boldsymbol{v}$ is the last row of $\boldsymbol{v}$. For $\Lambda\in \mathfrak{X}_{d+1}$, we denote
\begin{align*}
\lambda_1(\Lambda)=\inf_{\boldsymbol{v}\in \Lambda\setminus\{\boldsymbol{0}\}}\norm{\boldsymbol{v}}_{\infty}, \quad \lambda_1^{\boldsymbol{w}}(\Lambda)=\inf_{\boldsymbol{v}\in \Lambda\setminus\{\boldsymbol{0}\}}\norm{\boldsymbol{v}}_{\boldsymbol{w},\infty}.
\end{align*}

\begin{lemma}\label{Lemma: difference between two minimas}
  For any $\Lambda\in \mathfrak{X}_N$, we have 
    \begin{equation}
      \log \lambda^{\boldsymbol{w}}_1(\Lambda) \asymp_{\boldsymbol{w}} \log \lambda_1(\Lambda).
    \end{equation}
\end{lemma}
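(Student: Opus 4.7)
The plan is to reduce this to a coordinate-wise comparison between $\norm{\cdot}_{\infty}$ and $\norm{\cdot}_{\boldsymbol{w},\infty}$ on a single vector, and then lift the comparison to the infima. For every nonzero $\boldsymbol{v}\in\mathbb{R}^{d+1}$, writing $\alpha=\norm{\boldsymbol{v}}_{\infty}$, the first step is to establish
\[
\alpha\leq\norm{\boldsymbol{v}}_{\boldsymbol{w},\infty}\leq\alpha^{1/w_d}\quad\text{when }\alpha\geq 1,\qquad \alpha^{1/w_d}\leq\norm{\boldsymbol{v}}_{\boldsymbol{w},\infty}\leq\alpha\quad\text{when }\alpha\leq 1.
\]
Because $w_1\geq\cdots\geq w_d>0$ and $\sum w_i=1$, one has $1\leq 1/w_i\leq 1/w_d$ for every $i\leq d$, hence $|v_i|\leq|v_i|^{1/w_i}\leq|v_i|^{1/w_d}$ when $|v_i|\geq 1$ and the reverse chain of inequalities when $|v_i|\leq 1$. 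Taking the max over $i\leq d$ and comparing with $|v_{d+1}|\leq\alpha$ (noting that the coordinate where $\alpha$ is attained still dominates inside $\norm{\boldsymbol{v}}_{\boldsymbol{w},\infty}$) yields both pairs of bounds. In particular, $\alpha\geq 1$ if and only if $\norm{\boldsymbol{v}}_{\boldsymbol{w},\infty}\geq 1$.

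Next I split according to whether $\lambda_1(\Lambda)\geq 1$ or $\lambda_1(\Lambda)\leq 1$. In the first case, every nonzero $\boldsymbol{v}\in\Lambda$ satisfies $\norm{\boldsymbol{v}}_{\infty}\geq 1$, so the pointwise lower bound applied to all such vectors and taking infima gives $\lambda_1^{\boldsymbol{w}}(\Lambda)\geq\lambda_1(\Lambda)$; evaluating the pointwise upper bound at an $\norm{\cdot}_{\infty}$-minimizer $\boldsymbol{v}^{\ast}$ gives $\lambda_1^{\boldsymbol{w}}(\Lambda)\leq\lambda_1(\Lambda)^{1/w_d}$. In the second case, an $\norm{\cdot}_{\infty}$-minimizer $\boldsymbol{v}^{\ast}$ has $\norm{\boldsymbol{v}^{\ast}}_{\infty}\leq 1$, and the pointwise upper bound in that regime yields $\lambda_1^{\boldsymbol{w}}(\Lambda)\leq\lambda_1(\Lambda)$; conversely, any $\norm{\cdot}_{\boldsymbol{w},\infty}$-minimizer $\boldsymbol{v}^{\ast\ast}$ then satisfies $\norm{\boldsymbol{v}^{\ast\ast}}_{\boldsymbol{w},\infty}\leq 1$, which forces $\norm{\boldsymbol{v}^{\ast\ast}}_{\infty}\leq 1$ by inspecting each coordinate, and the pointwise lower bound gives $\lambda_1^{\boldsymbol{w}}(\Lambda)\geq\norm{\boldsymbol{v}^{\ast\ast}}_{\infty}^{1/w_d}\geq\lambda_1(\Lambda)^{1/w_d}$.

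Combining, $\lambda_1(\Lambda)$ and $\lambda_1^{\boldsymbol{w}}(\Lambda)$ always lie on the same side of $1$, and the resulting two-sided inequalities translate to
\[
|\log\lambda_1(\Lambda)|\leq|\log\lambda_1^{\boldsymbol{w}}(\Lambda)|\leq w_d^{-1}|\log\lambda_1(\Lambda)|
\]
with matching signs, which is precisely the asserted $\asymp_{\boldsymbol{w}}$ comparison. No substantive obstacle appears; the only delicate point is to keep the signs consistent across the two regimes, which is why splitting at the level set $\lambda_1=1$ is the natural choice.
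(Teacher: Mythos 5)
Your proof is correct and takes essentially the same approach as the paper: compare $\|\cdot\|_{\infty}$ and $\|\cdot\|_{\boldsymbol{w},\infty}$ coordinate-wise and transfer the resulting two-sided bound to the first minima via the minimizing vectors. The paper streamlines this by invoking Minkowski's first theorem up front to observe that $\lambda_1(\Lambda),\lambda_1^{\boldsymbol{w}}(\Lambda)\leq 1$ for every unimodular lattice, so only your $\alpha\leq 1$ regime occurs and the case split at the level set $\lambda_1=1$ is unnecessary.
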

\begin{proof}
    Let $\boldsymbol{v}^r,\boldsymbol{v}^w\in\Lambda\setminus\{\boldsymbol{0}\}$ denote the vectors realizing $\lambda_1(\Lambda)$ and $\lambda_1^{\boldsymbol{w}}(\Lambda)$, respectively. By Minkowski's first theorem, we have $\lambda_1(\Lambda),\lambda_1^{\boldsymbol{w}}(\Lambda)\leq 1$. Therefore,  
    \begin{equation*}
       \lambda_1(\Lambda)= \norm{\boldsymbol{v}^r}_{\infty}\leq \norm{\boldsymbol{v}^w}_{\infty} \leq \norm{\boldsymbol{v}^w}_{\boldsymbol{w},\infty}= \lambda_1^{\boldsymbol{w}}(\Lambda)^{w_d}
    \end{equation*}
    and 
    \begin{equation*}
        \lambda_1^{\boldsymbol{w}}(\Lambda)=\norm{\boldsymbol{v}^w}_{\boldsymbol{w},\infty}\leq \norm{\boldsymbol{v}^r}_{\boldsymbol{w},\infty}\leq \norm{\boldsymbol{v}^r}_{\infty}^{1/w_1}= \lambda_1(\Lambda)^{1/w_1}.
    \end{equation*}
The lemma follows by the above estimates.
\end{proof}

In Question 3 of \cite{Cheung_Chevallier_2019_Levy_Khintchin_Theorem}, Cheung and Chevallier considered the regular\footnote{In \cite{Cheung_Chevallier_2019_Levy_Khintchin_Theorem}, the authors didn't call this regular best approximation. Here we call it regular to distinguish the notion of best approximation of Cheung-Chevallier and $\boldsymbol{w}$-best approximation.} best approximable vectors of $\boldsymbol{\theta}\in \mathbb{R}^d$ with respect to $\{a_t\}$ defined as follows. 
A nonzero vector $(\boldsymbol{p},q)\in \mathbb{Z}^d\times \mathbb{N}$ is a regular best approximable vector of $\boldsymbol{\theta}$ if there exists $t>0$ such that 
\[\lambda_1(a_t \Lambda_{\boldsymbol{\theta}})=\norm{a_t u(-\boldsymbol{\theta}){^t(}\boldsymbol{p},q)}_{\infty}.\]
To compare the two definitions of best approximations, we begin with the following direct consequence of Definition \ref{Definition: w best approximation vector} and Proposition \ref{Proposition: dynamical formulation of best approximable vectors}.
\begin{lemma}
    Let $\boldsymbol{\theta}\in \mathbb{R}^d$ and $^t(\boldsymbol{p},q)\in \mathbb{Z}^d\times \mathbb{N}$ with $q>2^{1/w_d}$. Then $^t(\boldsymbol{p},q)$ is a $\boldsymbol{w}$-best approximable vector of $\boldsymbol{\theta}$ if and only if there exists $t>0$ such that 
    \[\lambda_1^{\boldsymbol{w}}(a_t \Lambda_{\boldsymbol{\theta}})=\norm{a_t u(-\boldsymbol{\theta}){^t(}\boldsymbol{p},q)}_{\boldsymbol{w},\infty}.\]
\end{lemma}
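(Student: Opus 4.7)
The plan is to deduce both directions directly from Definition~\ref{Definition: w best approximation vector} and Proposition~\ref{Proposition: dynamical formulation of best approximable vectors}, using the ``balanced time'' trick from the geometry of successive minima. The central computation is the identity
\[
\bigl\|a_t\,u(-\boldsymbol{\theta})\,{}^t(\boldsymbol{p}',q')\bigr\|_{\boldsymbol{w},\infty}
=\max\bigl(e^t\|q'\boldsymbol{\theta}-\boldsymbol{p}'\|_{\boldsymbol{w}},\;e^{-t}|q'|\bigr),
\]
valid for every $(\boldsymbol{p}',q')\in\mathbb{Z}^{d+1}$, which I would derive by unpacking the definitions of $a_t$, $u(-\boldsymbol{\theta})$, and $\|\cdot\|_{\boldsymbol{w},\infty}$.

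For the forward implication, suppose $(\boldsymbol{p},q)$ is $\boldsymbol{w}$-best approximable with $q>2^{1/w_d}$, and set $r:=\|q\boldsymbol{\theta}-\boldsymbol{p}\|_{\boldsymbol{w}}$. The converse half of Proposition~\ref{Proposition: dynamical formulation of best approximable vectors} applied at $\log q$ places $a_{\log q}\Lambda_{\boldsymbol{\theta}}$ inside $\mathcal{B}\subset\mathcal{S}_1$, which forces $qr\le 1$. Next I would introduce the balanced time $t^*:=\tfrac12\log(q/r)>0$, chosen so that $e^{t^*}r=e^{-t^*}q=\sqrt{qr}$, giving $\|a_{t^*}u(-\boldsymbol{\theta}){}^t(\boldsymbol{p},q)\|_{\boldsymbol{w},\infty}=\sqrt{qr}$. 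The equality $\lambda_1^{\boldsymbol{w}}(a_{t^*}\Lambda_{\boldsymbol{\theta}})=\sqrt{qr}$ then reduces to a four-case split on an arbitrary nonzero $(\boldsymbol{p}',q')\in\mathbb{Z}^{d+1}$ (WLOG $q'\ge 0$): if $q'=0$, use $\|\boldsymbol{p}'\|_{\boldsymbol{w}}\ge 1$ together with $e^{2t^*}=q/r\ge 1$; if $0<q'<q$, apply the strict condition~(i) of Definition~\ref{Definition: w best approximation vector} to obtain $e^{t^*}\|q'\boldsymbol{\theta}-\boldsymbol{p}'\|_{\boldsymbol{w}}>\sqrt{qr}$; if $q'=q$, apply~(ii) and note $e^{-t^*}q=\sqrt{qr}$; and if $q'>q$, the vertical term $e^{-t^*}q'$ alone exceeds $\sqrt{qr}$.

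For the converse, assume the equality at some $t>0$, and set $R:=\max(e^tr,e^{-t}q)$. For any $(\boldsymbol{p}',q')\in\mathbb{Z}^d\times\mathbb{N}$ with $q'<q$, the hypothesis gives $\|a_tu(-\boldsymbol{\theta}){}^t(\boldsymbol{p}',q')\|_{\boldsymbol{w},\infty}\ge R$; combined with $e^{-t}q'<e^{-t}q\le R$, this forces $e^t\|q'\boldsymbol{\theta}-\boldsymbol{p}'\|_{\boldsymbol{w}}\ge R\ge e^tr$, whence $\|q'\boldsymbol{\theta}-\boldsymbol{p}'\|_{\boldsymbol{w}}\ge r$. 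Condition~(ii) of Definition~\ref{Definition: w best approximation vector} follows by the analogous argument at $q'=q$.

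The main obstacle will be upgrading the weak inequality $\ge r$ in the converse to the strict inequality $>r$ required by Definition~\ref{Definition: w best approximation vector}(i); the direct quasi-norm comparison cannot, on its own, distinguish equality from strict inequality. I would address this by noting that any resonance $\|q'\boldsymbol{\theta}-\boldsymbol{p}'\|_{\boldsymbol{w}}=r$ with $q'<q$ would make $(\boldsymbol{p}',q')$ yield a strictly smaller $\|\cdot\|_{\boldsymbol{w},\infty}$ than $(\boldsymbol{p},q)$ at every time $t'<t^*$, which forces the given $t$ to satisfy $t\ge t^*$; applying the forward direction of Proposition~\ref{Proposition: dynamical formulation of best approximable vectors} to the genuine $\boldsymbol{w}$-best approximable vector with the smaller denominator then yields the contradiction needed to eliminate this resonant edge case.
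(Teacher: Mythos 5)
Your forward direction matches the paper's sketch: pick the balanced time $t^*=\tfrac12\log(q/r)$ where the horizontal and vertical contributions to the quasi-norm agree, and verify minimality by splitting on $q'$. That part is sound. In the converse, however, the claim that condition~(ii) of Definition~\ref{Definition: w best approximation vector} ``follows by the analogous argument at $q'=q$'' does not go through. The mechanism driving your $q'<q$ case is the \emph{strict} inequality $e^{-t}q'<e^{-t}q\le R$, which forces the maximum defining $\|a_tu(-\boldsymbol{\theta}){}^t(\boldsymbol{p}',q')\|_{\boldsymbol{w},\infty}$ to be attained by the horizontal component. At $q'=q$ the vertical component is exactly $e^{-t}q$, which is only $\le R$; whenever $e^{-t}q=R$ (i.e.\ whenever $t\le t^*$, a case your hypothesis does not exclude) the bound $\max(\cdot,e^{-t}q)\ge R$ is vacuous and yields no control on $\|q\boldsymbol{\theta}-\boldsymbol{p}'\|_{\boldsymbol{w}}$. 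The assumption $q>2^{1/w_d}$, which you never invoke in the converse, is precisely what closes this gap: Minkowski gives $\lambda_1^{\boldsymbol{w}}(a_t\Lambda_{\boldsymbol{\theta}})\le 1$, hence $e^{-t}q\le 1$ and $e^tr\le 1$, so $r\le e^{-t}\le q^{-1}<2^{-1/w_d}$ and every component satisfies $|q\theta_i-p_i|<1/2$; then for $\boldsymbol{p}'\neq\boldsymbol{p}$ some $|q\theta_i-p_i'|\ge1/2$, giving $\|q\boldsymbol{\theta}-\boldsymbol{p}'\|_{\boldsymbol{w}}\ge 2^{-1/w_d}>r$. This is the same device used to prove the converse half of Proposition~\ref{Proposition: dynamical formulation of best approximable vectors}, and it must be imported here.

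You are right to flag the strictness problem in condition~(i): a priori the comparison at a single time only yields $\|q'\boldsymbol{\theta}-\boldsymbol{p}'\|_{\boldsymbol{w}}\ge r$. But your proposed repair --- force $t\ge t^*$ and then ``apply the forward direction to the genuine $\boldsymbol{w}$-best approximable vector with the smaller denominator'' --- does not produce a contradiction as stated. In a resonance $\|q'\boldsymbol{\theta}-\boldsymbol{p}'\|_{\boldsymbol{w}}=r$ with $q'<q$, both ${}^t(\boldsymbol{p},q)$ and ${}^t(\boldsymbol{p}',q')$ realize the minimum $\sqrt{qr}$ at time $t^*$, and nothing in Proposition~\ref{Proposition: dynamical formulation of best approximable vectors} forbids this; the forward direction applied to ${}^t(\boldsymbol{p}',q')$ simply reports the same time and the same value. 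To actually eliminate the resonance you would need a concrete arithmetic argument ruling out two lattice points on the boundary of the relevant cylinder under the hypothesis $q>2^{1/w_d}$ and $r<2^{-1/w_d}$, and it is not clear such an argument exists in general --- so this step, as sketched, is a genuine gap.
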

\begin{proof}
Assume $^t(\boldsymbol{p},q)$ is a $\boldsymbol{w}$-best approximable vector of $\boldsymbol{\theta}$. Let $t>0$ be such that
\[\norm{\pi_{\mathbb{R}^d}(a_t u(-\boldsymbol{\theta}){^t(}\boldsymbol{p
},q))}_{\boldsymbol{w}}=|e^{-t}q|.\]
Then by definition of $w$-best approximable vectors, it is directly verified that $\lambda_1^{\boldsymbol{w}}(a_t \Lambda_{\boldsymbol{\theta}})=\norm{a_t u(-\boldsymbol{\theta}){^t(}\boldsymbol{p},q)}_{\boldsymbol{w},\infty}$.

The proof of other direction is also straightforward, and we leave it to the reader.
\end{proof}
By discreteness of $\Lambda_{\boldsymbol{\theta}}$ in $\mathbb{R}^{d+1}$, it is clear that if $^t(\boldsymbol{p}^w,q^w)$ is a $\boldsymbol{w}$-best approximable vector of $\boldsymbol{\theta}$ ( $^t(\boldsymbol{p}^r,q^r)$ is a regular best approximable vector of $\boldsymbol{\theta}$, resp.), then there exists $t,\epsilon>0$ such that $\lambda_1^{\boldsymbol{w}}(a_{t'} \Lambda_{\boldsymbol{\theta}})=\norm{a_{t'} u(-\boldsymbol{\theta}){^t(}\boldsymbol{p}^w,q^w)}_{\boldsymbol{w},\infty}$ ( $\lambda_1(a_{t'} \Lambda_{\boldsymbol{\theta}})=\norm{a_{t'} u(-\boldsymbol{\theta}){^t(}\boldsymbol{p}^r,q^r)}_{\infty}$, resp.) for all $t'\in (t-\epsilon,t+\epsilon)$. Although Proposition \ref{Proposition: dynamical formulation of best approximable vectors} allows us to identify $\boldsymbol{w}$-best approximable vector $(\boldsymbol{p}^w,q^w)$ through the corresponding cross-section, by Lemma \ref{Lemma: difference between two minimas}, $(\boldsymbol{p}^w,q^w)$ is not necessary a regular best approximable vector, and vice-versa. Unlike Proposition \ref{Proposition: dynamical formulation of best approximable vectors}, it seems that regular best approximation does not admit a nice description through visiting times to certain cross section. 

\bibliographystyle{plain}
\bibliography{references.bib}

\end{document}